\newtheorem{thm}{Theorem}[section]
\newtheorem{lem}[thm]{Lemma}
\newtheorem{prop}[thm]{Proposition}
\newtheorem{cor}[thm]{Corollary}
\newtheorem{df}[thm]{Definition}
\newtheorem{rem}{Remark}
\newcommand{\E}{\mathbb{E}}
\newcommand{\G}{\mathcal{G}}
\newcommand{\R}{\mathbb{R}}
\newcommand{\Z}{\mathbb{Z}}
\newcommand{\N}{\mathbb{N}}
\newcommand{\F}{\mathcal{F}}
\newcommand{\e}{\varepsilon}
\begin{document}

\date{October 30, 2011}
\title{On the Number of Ground States of the Edwards-Anderson Spin Glass Model}
\author{\Large Louis-Pierre Arguin  \thanks{L.-P. Arguin was supported by the NSF grant DMS-0604869 during part of this work.} \\ \small Universit\'e de Montr\'eal \\ \small Montr\'eal, QC H3T 1J4 Canada
\and \Large Michael Damron\thanks{M. Damron is supported by an NSF postdoctoral fellowship.} \\ \small Princeton University \\ \small Princeton, NJ 08544, USA}
 
\maketitle

\begin{abstract}
Ground states of the Edwards-Anderson (EA) spin glass model are studied on infinite graphs with finite degree.
Ground states are spin configurations that locally minimize the EA Hamiltonian on each finite set of vertices.
A problem with far-reaching consequences in mathematics and physics is to determine the number of ground states for the model on $\Z^d$ for any $d$.
This problem can be seen as the spin glass version of determining the number of infinite geodesics in first-passage percolation
or the number of ground states in the disordered ferromagnet.
It was recently shown by Newman, Stein and the two authors that, on the half-plane $\Z\times\N$, there is a unique ground state
(up to global flip) arising from the weak limit of finite-volume ground states for a particular choice of boundary conditions.
In this paper, we study the entire set of ground states on the infinite graph, proving that the number of ground states on the half-plane must
be two (related by a global flip) or infinity. This is the first result on the entire set of ground states in a non-trivial dimension. In the first part of the paper, we develop tools of interest to prove the analogous result on $\Z^d$.
\end{abstract}

\tableofcontents

\section{Introduction}

\subsection{The model and the main result}

We study the Edwards-Anderson (EA) spin glass model on an infinite graph $G=(V,E)$ of finite degree.
We mostly take $G=\Z^d$ (and further, $d=2$),
and $G=\Z\times\N$, a half-plane of $\Z^2$.

For a finite set $A \subseteq V$, consider the set of spin configurations $\Sigma_A = \{-1,+1\}^A$ and for $\sigma \in \Sigma_A$, the Hamiltonian (with free boundary conditions)
\begin{equation}
\label{eqn: H}
H_{J,A}(\sigma) = -\sum_{\substack{\{x,y\}\in E\\ x,y \in A}} J_{xy}\sigma_x\sigma_y\ ,
\end{equation}
where the $J_{xy}$'s (the {\it couplings}) are taken from an i.i.d. product measure $\nu$. We assume that the distribution of each $J_{xy}$ is continuous with support equal to $\mathbb{R}$. For inverse temperature $\beta>0$ the Gibbs measure for $A$ is 
\[
\text{{\bf G}}_{J,A}(\sigma) = \frac{1}{Z_{J,A}} \exp(-\beta H_{J,A}(\sigma)) \ , \ Z_{J,A}=\sum_{\sigma\in\Sigma_A} \exp(-\beta H_{J,A}(\sigma))\ .
\]
As temperature approaches 0 ($\beta \to \infty$) the Gibbs measure converges weakly to a sum of two delta masses, supported on the spin configurations with minimal value of $H_{J,A}$. 
These spin configurations (related by global flip) can be seen to be characterized by the following local flip property: for each $B \subseteq A$, we have
\[
\sum_{\substack{\{x,y\} \in \partial B \\ x,y \in A}} J_{xy}\sigma_x\sigma_y \geq 0\ .
\]
Here the set $\partial B\subset E$ is defined as all edges $\{x,y\}$ such that $x \in B$ and $y \notin B$. 
The advantage is that this definition makes sense for infinite sets $A$. 
For this reason, we define the {\it set of ground states on the infinite graph $G$ at couplings $J$} by
\begin{equation}\label{eq:GSproperty}
\mathcal{G}(J)=\{\sigma\in \{-1,+1\}^{V}: \forall A\subset V \text{ finite, }\sum_{\{x,y\}\in\partial A}J_{xy}\sigma_x\sigma_y\geq 0\}\ .
\end{equation}
In other words, elements of $\mathcal{G}(J)$ are the spin configurations minimizing the Hamiltonian locally for the coupling realization $J$.
Clearly, $\sigma\in \mathcal{G}(J)$ if and only if $-\sigma\in \mathcal{G}(J)$.
The goal of this paper is not to determine precisely the cardinality of $\mathcal{G}(J)$ but rather
to rule out possibilities other than two or infinity.
Our main result is to prove such a claim in the case of the EA model on the two-dimensional
half-plane.
\begin{thm}\label{thm: mainthm}
For the EA model on the half-plane $G=\Z\times \N$,
the number of ground states $|\mathcal{G}(J)|$ is either $2$ with $\nu$-probability one or $\infty$ with $\nu$-probability one.
\end{thm}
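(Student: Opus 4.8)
\emph{Step 1: a zero--one law.}
The map $J\mapsto|\mathcal{G}(J)|$ is measurable and invariant under the $\Z$-action of horizontal translations of $\Z\times\N$; these translations preserve the i.i.d.\ product measure $\nu$ and act ergodically, so $|\mathcal{G}(J)|$ is a.s.\ equal to a constant $k$. By compactness of $\{-1,+1\}^V$, every weak subsequential limit of finite-volume ground states lies in $\mathcal{G}(J)$, so $\mathcal{G}(J)\neq\emptyset$; and since $\sigma\in\mathcal{G}(J)$ iff $-\sigma\in\mathcal{G}(J)$ while $\sigma\neq-\sigma$ always, $k$ is even or $\infty$. Hence $k\in\{2,4,6,\dots\}\cup\{\infty\}$, and Theorem~\ref{thm: mainthm} is equivalent to excluding every finite value $k\geq 4$.

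\emph{Step 2: domain walls between ground states have only infinite components.}
Suppose $4\leq k<\infty$, toward a contradiction. Then a.s.\ there exist $\sigma,\tau\in\mathcal{G}(J)$ with $\tau\notin\{\sigma,-\sigma\}$; put $D=\{x:\sigma_x\neq\tau_x\}$, a nonempty proper subset of $V$. If $C$ were a finite connected component of $D$, then for every edge $\{x,y\}\in\partial C$ the far endpoint lies outside $D$, so $\sigma_x\sigma_y=-\tau_x\tau_y$ on $\partial C$; applying \eqref{eq:GSproperty} to $\sigma$ and to $\tau$ with $A=C$ forces $\sum_{\{x,y\}\in\partial C}J_{xy}\sigma_x\sigma_y$ to be both $\geq 0$ and $\leq 0$, hence $0$ --- impossible, because a.s.\ no finite set of edges carries a nontrivial $\pm1$-combination of the independent continuous couplings summing to zero (a countable union of null events). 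The same argument with $A=C'$ rules out finite components of $D^c$. Thus every component of $D$ and of $D^c$ is infinite, so in the planar graph $\Z\times\N$ the edge boundary $\partial D$, drawn in the dual, is a disjoint union of doubly-infinite self-avoiding dual paths (together, possibly, with dual paths both of whose ends reach the outer face of the half-plane). Taking the union of these boundaries over all (finitely many) pairs of ground states yields a translation-covariant random set $\mathcal{W}(J)$ of dual edges that is a.s.\ nonempty and a.s.\ a finite union of such bi-infinite curves.

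\emph{Step 3: the half-plane argument (the main obstacle).}
The plan is to show that a coupling realization with finitely many but at least four ground states cannot sustain the rigid picture of Step 2. First I would use the tools of the first part of the paper to upgrade that structure: finiteness of $\mathcal{G}(J)$ should force each domain wall to consist of finitely many (infinite) components with good separation, so that an extremal choice among the boundedly many curves comprising $\mathcal{W}(J)$ reduces matters to two ground states $\sigma,\sigma'$ that differ exactly across a single region $R\subset V$ whose edge boundary $\partial R$, drawn in the dual, is one bi-infinite curve $\gamma$ that is translation-covariant in law up to horizontal shift, with $R$ and $R^{c}$ each having only infinite components; alternatively, when domain-wall components are numerous one would instead manufacture infinitely many ground states by surgery, again a contradiction. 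Since $\gamma$ splits $\Z\times\N$ into two regions each having only infinite components, $\gamma$ either runs transversally (its ends escaping to $x\to-\infty$ and to $x\to+\infty$) or has both ends escaping upward, and in either case the bottom row lies wholly in $R^{c}$. I would then play the ground-state inequalities for $\sigma$ and $\sigma'$ against each other on finite regions $B$ straddling $\gamma$: flipping $R$ changes edge products only along $\partial R$, so $\sum_{\{x,y\}\in\partial B}J_{xy}\sigma'_x\sigma'_y=\sum_{\{x,y\}\in\partial B}J_{xy}\sigma_x\sigma_y-2\sum_{\{x,y\}\in\partial B\cap\partial R}J_{xy}\sigma_x\sigma_y$, and nonnegativity of the left side and of $\sum_{\{x,y\}\in\partial B}J_{xy}\sigma_x\sigma_y$ constrains the flux of $\gamma$ through $B$. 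Combining this with translation-ergodicity (which turns ``$\gamma$ is a.s.\ present'' into quantitative control of its location and transverse fluctuations) and with the essentially one-dimensional large-scale geometry of the half-plane --- in particular the uniqueness, up to global flip, of the weak-limit ground state established by Newman, Stein and the authors --- should force $R$ to be absorbable, i.e.\ $\sigma'\in\{\sigma,-\sigma\}$, contradicting $k\geq 4$. I expect this final step --- pinning down the half-plane geometry of a covariant domain-wall curve and extracting the contradiction from the ground-state inequalities along it --- to be the crux; Steps 1 and 2 are soft. Combining Steps 1--3 gives $k\notin\{4,6,8,\dots\}$, hence $k\in\{2,\infty\}$ as claimed.
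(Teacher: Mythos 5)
Steps~1 and~2 are essentially correct and agree in spirit with the paper's groundwork (the ergodic zero--one law is the paper's Corollary~\ref{cor: constant}, and the observation that finite components of the difference set have zero probability --- via the continuity of $\nu$ and the ground-state inequality applied to $\sigma$ and $\tau$ on the component --- matches the paper's treatment of domain walls). But Step~3 is a placeholder, not a proof, and the route it sketches does not close.

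Several of the claims in Step~3 are not merely unproved but are likely to fail or are explicitly dead ends in the paper. First, the reduction ``to two ground states $\sigma,\sigma'$ that differ exactly across a single region $R$ bounded by one bi-infinite curve $\gamma$'' is not justified and is not obviously available: even with $|\mathcal{G}(J)|$ finite, a domain wall between a pair of ground states may a priori consist of infinitely many doubly-infinite components, and part of the paper's work (the tethered-domain-wall count in Lemma~\ref{lem: tethered}) is to establish exactly that multiplicity, then to exploit it. Second, the alternative branch ``manufacture infinitely many ground states by surgery'' is left completely unspecified: surgery on a single fixed $J$ does not produce new ground states (a local flip across a loop strictly increases energy a.s.), and there is no covariant mechanism offered. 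Third --- and most importantly --- the appeal to the metastate uniqueness of \cite{ADNS10} to conclude $\sigma'\in\{\sigma,-\sigma\}$ is precisely the gap the paper calls out in its introduction: the metastate is built from a specific sequence of boundary conditions and a $J$-independent subsequence, and there is no reason its support should exhaust $\mathcal{G}(J)$. So metastate uniqueness does not constrain the full set $\mathcal{G}(J)$, and that inference cannot carry the proof.

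The paper's actual argument is structurally quite different and the crux is precisely what your Step~3 leaves open. It introduces the \emph{uniform measure} $\mu_J$ on $\mathcal{G}(J)$ (well-defined under the finiteness hypothesis), establishes monotonicity of the family $(\mu_J)$ in individual couplings, and uses critical values $C_e(J,\sigma)$ and super-satisfied edges to control critical droplets (Sections~2--3). It then proves Proposition~\ref{prop: touch}: if the two-replica interface is nonempty with positive probability, every edge is in it with positive probability; this gives infinitely many tethered domain walls. Averaging the push-forward measure $M^*$ (which records the critical values as extra coordinates) over vertical translations and taking a weak limit produces a translation-invariant measure $\widetilde M$ on $\Z^2$ whose interface is disconnected with positive probability (Proposition~\ref{prop: not connected}) but also connected almost surely by a Newman--Stein rung argument (Proposition~\ref{prop: connected}) --- using Lemma~\ref{lem: cylinder}, Lemma~\ref{lem: SStypemod}, Lemma~\ref{lem: backmodify} and a Burton--Keane type count. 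That tension yields the contradiction. None of these ingredients --- the uniform measure and its monotonicity, critical values, super-satisfied edges, the weak-limit construction to $\Z^2$, or the rung/energy argument --- appears in your sketch, and your proposed substitutes (flux along $\gamma$, metastate uniqueness) are insufficient. You should regard Steps~1--2 as correct soft preparation and Step~3 as an unfilled gap rather than a different proof.
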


\subsection{Previous results}

A main question in the theory of short-range spin glasses is to understand the structure of the set $\mathcal{G}(J)$,
and in particular its cardinality.
This problem is the zero-temperature equivalent of understanding the structure and the cardinality of the
set of pure states, the set of infinite-volume Gibbs measures of the EA model that are extremal. 
It is easy to check that for $G=\Z$, $\mathcal{G}(J)$ has only two elements: the flip-related
configurations $\sigma$ defined by the identity $\sigma_x\sigma_y=\text{sgn} J_{xy}$.
However, it is not known how many elements are in $\mathcal{G}(J)$ for $G=\Z^d$ when $d>1$.
(We will see in the next section that the cardinality of $\mathcal{G}(J)$ must be a constant number $\nu$-almost surely.)
It is expected that $|\mathcal{G}(J)|=2$ for $d=2$ \cite{Middleton99,PY99}
(see also \cite{Loebl04} for a possible counterargument to this).
There are competing predictions for higher dimensions.
The Replica Symmetry Breaking (RSB) scenario would predict $|\mathcal{G}(J)|=\infty$ for $d$ high enough,
and the droplet/scaling proposal would be consistent with $|\mathcal{G}(J)|=2$ in every dimension.
We refer to \cite{N97,NS03} for a detailed discussion on ground states of disordered systems 
or pure states at positive temperature.

There have been several works on ground states of the EA model in the physics and mathematics literature;
a partial list includes \cite{ADNS10, FH86, Loebl04, Middleton99, N97, NS01, NS03, PY99}.
The present work appears to give the first rigorous result about the entire set of ground states $\mathcal{G}(J)$. 
Previous rigorous results have focused on the so-called {\it metastates on ground states}.
A metastate is a $J$-dependent probability measure on $\{-1,+1\}^V$ supported on ground states.
It is constructed using a sequence of finite graphs $G_n$ converging to $G$.
For a given realization $J$ and $n$, the ground state on $G_n$ is unique up to a global flip.
We identify the flip-related configurations and write $\sigma_n^*(J)$ for them.
A metastate is obtained by considering a converging subsequence of the 
measures $\big(\nu(dJ) ~ \delta_{\sigma_n^*(J)}\big)_n$,
where $\delta_{\sigma_n^*(J)}$ is the delta measure on the ground state of $G_n$
for the coupling realization $J$. If $\kappa$ denotes a subsequential limiting measure, then
sampling from $\kappa$ gives a pair $(J,\sigma)\in  \R^E \times \{-1,+1\}^V$. 
A metastate is the conditional measure $\kappa$ given $J$ and is denoted $\kappa_J$.
It is not hard to verify that $\kappa_J$ is supported on $\mathcal{G}(J)$.

It was proved in \cite{ADNS10} that the ground state of the EA model on the half-plane with horizontal periodic boundary conditions 
and free boundary condition at the bottom is unique in the metastate sense.
Precisely, for a sequence of boxes $G_n$ that converges to the half-plane, the
limit $\kappa_J$ produced by the metastate construction is unique and is given by
a delta measure on two flip-related ground states. Though the metastate construction is very natural, 
it is important to stress that the measure thus obtained is not necessarily supported on the whole set $\mathcal{G}(J)$.
It may be that some elements of $\mathcal{G}(J)$ do not appear in the support of the metastate, due to the choice of boundary
conditions on $G_n$ or to the fact that the subsequence in the metastate construction is chosen independently of $J$.
Therefore uniqueness in the metastate sense does not answer the more general question of the number of ground states.

It is natural from a statistical physics perspective to study the set $\mathcal{G}(J)$ by looking at probability measures on it.
One challenge is to construct
probability measures on $\mathcal{G}(J)$ that have a nice dependence on $J$,
namely measurability and translation covariance.
The metastate (with suitably chosen boundary conditions) briefly described above is one such measure. 
The main idea of the present paper is to consider another measure, the {\it uniform measure on $\mathcal{G}(J)$}
$$
\mu_J:= \frac{1}{|\mathcal{G}(J)|}\sum_{\sigma\in \mathcal{G}(J)} \delta_{\sigma}\ .
$$
For $\mu_J$ to be well-defined it is necessary to assume that $|\mathcal{G}(J)|$ is finite. 
Like the metastate, the uniform measure on ground states depends nicely on $J$:
see Proposition~\ref{prop: mu mble} and Lemma~\ref{lem: covariance}.
The strategy to prove a ``two-or-infinity" result is to assume that $|\mathcal{G}(J)|<\infty$ and to conclude that
it implies that $\mu_J$ is supported on two spin configurations related by a global flip (that is, $|\mathcal{G}(J)| = 2$).
The approach is similar to the proof of uniqueness in \cite{ADNS10} using the  interface between ground states,
though new tools need to be developed.
For spin configurations $\sigma$ and $\sigma'$, define the {\it interface} $\sigma \Delta \sigma'$ as 
$$
\sigma \Delta \sigma'=\{\{x,y\}\in E:\sigma_x\sigma_y\neq \sigma'_x\sigma'_y \}\ .
$$
It will be shown for the half-plane that
$$
\int \nu(dJ) ~ \mu_J\times\mu_J\{(\sigma,\sigma'):\sigma\Delta \sigma'=\emptyset\}=1 \ .
$$
This implies that $\mu_J$ is supported on two flip-related configurations for $\nu$-almost all $J$ since
$\sigma\Delta\sigma'=\emptyset$ if and only if $\sigma=\sigma'$ or $\sigma=-\sigma'$.

Before going into the details of the proofs, we remark that the problem of determining the number of ground states for the EA model can be seen as a spin-glass version of a first-passage
percolation problem. Indeed, one question in two-dimensional first-passage percolation is to determine whether there exist infinite geodesics. 
These are doubly-infinite curves that locally minimize the sum of the random weights between vertices of the graph.
This problem is equivalent to determining whether there exist more than two flip-related ground states in the (ferromagnetic) Ising model with random couplings. 
The Hamiltonian of the ferromagnetic model is the same as in \eqref{eqn: H}, but the distribution of $J$ is restricted to the positive half-line.
The reader is referred to \cite{Wehr97} for the details of the correspondence. 
It was proved by Wehr in \cite{Wehr97} that the number of ground states for this model is either two or infinity in dimensions greater or equal to two.
On the half-plane, it was shown by Wehr and Woo \cite{WehrWoo98} that the number of ground states is two.
Contrary to the ferromagnetic case, the study of ground states of the EA spin glass model presents technical difficulties that stem
from the presence of positive and negative couplings. This feature rules out monotonicity of the partial sums of couplings along an interface.

The paper is organized into two main parts as follows.
The first part develops general tools to study ground states of the EA model.
Precisely, in Section 2, elementary properties of the set $\mathcal{G}(J)$ are derived for general graphs.
In particular, the dependence of $\mathcal{G}(J)$ on a single coupling is studied.
Properties of probability measures on $\mathcal{G}(J)$ are investigated in Section 3 with
an emphasis on the uniform measure on $\mathcal{G}(J)$.
The second part of the paper consists of the proof of Theorem~\ref{thm: mainthm} and is contained in Section 4.\\

\noindent {\bf Acknowledgements}
Both authors are indebted to Charles Newman and Daniel Stein for having introduced them to the subject of short-range spin glasses and for numerous discussions on 
related problems. L.-P. Arguin thanks also Janek Wehr for discussions on the problem of the number of ground states in spin glasses and in disordered ferromagnets.

 \section{Elementary properties of the set of ground states}\label{sec:gs}
In this section, unless otherwise stated, we consider the EA model on a connected graph $G=(E,V)$ of finite degree.
We assume that there exists a sequence of subgraphs $(G_n)$ that converges locally to $G$.
Throughout the paper, we will use the following notation:
$\Omega_1 = \R^{E}$, and ${\cal F}_1$ is the Borel sigma-algebra generated by its product topology;
$\Omega_2 = \{-1,+1\}^{V}$ and ${\cal F}_2$ is the corresponding product sigma-algebra.

\subsection{Measurability}
We first note that the set of ground states is compact.
\begin{lem}
$\mathcal{G}(J)$ is a non-empty compact subset of $\Omega_2$ (in the product topology) for all $J$.
In particular, the set of probability measures on $\mathcal{G}(J)$ is compact in the weak-* topology on the set of probability measures on $\Omega_2$.
\end{lem}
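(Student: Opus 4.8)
The plan is to establish the three asserted properties in turn: non-emptiness, compactness, and the consequence for probability measures. Non-emptiness I would handle by a compactness/diagonalization argument. For each $n$, let $\sigma^{(n)}$ be a minimizer of $H_{J,G_n}$ on $G_n$ (which exists since $\Sigma_{G_n}$ is finite), extended arbitrarily to all of $V$. Since $\Omega_2 = \{-1,+1\}^V$ is compact in the product topology, pass to a subsequence along which $\sigma^{(n)}$ converges to some $\sigma \in \Omega_2$. I would then check that $\sigma \in \mathcal{G}(J)$: given any finite $A \subset V$, the edge set $\partial A$ is finite, so for $n$ large enough $\partial A$ lies entirely inside $G_n$ and $A \subseteq G_n$; the local flip property of $\sigma^{(n)}$ on $G_n$ gives $\sum_{\{x,y\}\in\partial A} J_{xy}\sigma^{(n)}_x\sigma^{(n)}_y \geq 0$, and since the spins on the finitely many vertices touched by $\partial A$ stabilize to those of $\sigma$ along the subsequence, the inequality passes to the limit. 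Hence $\mathcal{G}(J) \neq \emptyset$.

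For compactness, since $\Omega_2$ is itself compact (Tychonoff), it suffices to show $\mathcal{G}(J)$ is closed. Fix a finite $A \subset V$ and let
\[
\mathcal{G}_A(J) = \Big\{\sigma \in \Omega_2 : \sum_{\{x,y\}\in\partial A} J_{xy}\sigma_x\sigma_y \geq 0\Big\}.
\]
This is a closed set, because the map $\sigma \mapsto \sum_{\{x,y\}\in\partial A} J_{xy}\sigma_x\sigma_y$ depends only on the finitely many coordinates of $\sigma$ incident to $\partial A$, hence is continuous on $\Omega_2$, and $\mathcal{G}_A(J)$ is the preimage of the closed half-line $[0,\infty)$. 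Then $\mathcal{G}(J) = \bigcap_{A \subset V \text{ finite}} \mathcal{G}_A(J)$ is an intersection of closed sets, hence closed, hence compact.

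For the final sentence, let $\mathcal{P}(\Omega_2)$ denote the set of Borel probability measures on $\Omega_2$ with the weak-* topology; since $\Omega_2$ is a compact metric space, $\mathcal{P}(\Omega_2)$ is itself compact and metrizable. The set of probability measures supported on $\mathcal{G}(J)$ is exactly $\{\mu \in \mathcal{P}(\Omega_2) : \mu(\mathcal{G}(J)) = 1\}$, equivalently $\{\mu : \mu(\Omega_2 \setminus \mathcal{G}(J)) = 0\}$. Because $\mathcal{G}(J)$ is closed, its complement $U = \Omega_2 \setminus \mathcal{G}(J)$ is open, and by the portmanteau theorem the map $\mu \mapsto \mu(U)$ is lower semicontinuous on $\mathcal{P}(\Omega_2)$; thus $\{\mu : \mu(U) \leq 0\} = \{\mu : \mu(U) = 0\}$ is closed, hence a closed subset of the compact space $\mathcal{P}(\Omega_2)$, hence compact.

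I do not expect a serious obstacle here; the only point requiring a little care is the non-emptiness argument, where one must be sure that the finite-volume minimizers' local flip inequalities survive the passage to the product-topology limit — but this is immediate once one observes that each such inequality involves only finitely many vertices and that, on $G_n$ for $n$ large, the finite-volume ground state already satisfies all the inequalities indexed by a fixed finite $A$.
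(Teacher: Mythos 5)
Your proof is correct and follows essentially the same route as the paper's: a diagonal/subsequence argument for non-emptiness using finite-volume minimizers, writing $\mathcal{G}(J)$ as an intersection of closed sets indexed by finite $A$ for compactness, and deducing weak-* compactness of the measure space from closedness of $\mathcal{G}(J)$. The paper states the non-emptiness step and the final assertion in one line each; you supply the omitted details (including the observation that one must take $n$ large enough for $\partial A$ to lie within $G_n$, and the portmanteau-theorem argument for the measure statement), but the argument is the same one.
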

\begin{proof}
The fact that $\mathcal{G}(J)$ is non-empty follows by a standard compactness argument, taking a subsequence of ground states for the Hamiltonian
\eqref{eqn: H} with $A=G_n$.
The function $\sigma\mapsto \sum_{\{x,y\}\in\partial A}J_{xy}\sigma_x\sigma_y$ is continuous in the product topology for a given finite $A$ and $J$.
Therefore, the set $\{\sigma\in \{-1,+1\}^{V}: \sum_{\{x,y\}\in\partial A}J_{xy}\sigma_x\sigma_y\geq 0\}$ is closed. 
Since $\mathcal{G}(J)$ is the intersection of these sets over all finite $A$ by \eqref{eq:GSproperty}, it is closed. Being a closed subset of the compact space $\Omega_2$, it is also compact.
The second statement of the lemma follows from the first.
\end{proof}

The next result is necessary for the uniform measure to be well-behaved and  to later apply the ergodic theorem to $|\mathcal{G}(J)|$.
\begin{prop}\label{prop:mble}
The random variable $J\mapsto |\mathcal{G}(J)|$ is $\mathcal{F}_1$-measurable.
\end{prop}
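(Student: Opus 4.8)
The plan is to reduce the statement to the measurability of countably many ``local'' events and then exploit the compactness of $\Omega_2=\{-1,+1\}^V$. Fix an exhaustion $W_1\subseteq W_2\subseteq\cdots$ of the (countable) vertex set $V$ by finite sets with $\bigcup_m W_m=V$, and for finite $W\subseteq V$ let $\pi_W\colon\Omega_2\to\{-1,+1\}^W$ be the restriction map. The first step is the elementary observation that for every integer $k\ge 1$,
\[
|\mathcal{G}(J)|\ge k \quad\Longleftrightarrow\quad \exists\, m \text{ with } |\pi_{W_m}(\mathcal{G}(J))|\ge k .
\]
The implication $\Leftarrow$ is immediate since distinct restrictions come from distinct configurations; for $\Rightarrow$, $k$ distinct ground states differ pairwise at finitely many vertices, all of which lie in some $W_m$, and then their restrictions to $W_m$ are distinct. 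Equivalently, $|\mathcal{G}(J)|=\lim_{m\to\infty}|\pi_{W_m}(\mathcal{G}(J))|$, a nondecreasing limit in $\{1,2,\dots\}\cup\{\infty\}$. Since also $\{J:|\mathcal{G}(J)|=\infty\}=\bigcap_{k\ge1}\{J:|\mathcal{G}(J)|\ge k\}$, it suffices to show that for each finite $W$ and each $s\in\{-1,+1\}^W$ the set
\[
C_{W,s}:=\{J\in\Omega_1:\ \exists\,\sigma\in\mathcal{G}(J)\text{ with }\pi_W\sigma=s\}
\]
is $\mathcal{F}_1$-measurable: the event $\{|\pi_{W_m}(\mathcal{G}(J))|\ge k\}$ is then a finite union, over $k$-element subsets $\{s_1,\dots,s_k\}$ of $\{-1,+1\}^{W_m}$, of the finite intersections $\bigcap_{i=1}^k C_{W_m,s_i}$.

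Next I would observe that the set $\Gamma:=\{(J,\sigma)\in\Omega_1\times\Omega_2:\sigma\in\mathcal{G}(J)\}$ is closed in the product topology. By the defining property \eqref{eq:GSproperty}, $\Gamma$ is the intersection, over the finite $A\subseteq V$, of the sets $\{(J,\sigma):\sum_{\{x,y\}\in\partial A}J_{xy}\sigma_x\sigma_y\ge0\}$; each of these is closed, being the preimage of $[0,\infty)$ under a function that is continuous (a polynomial in finitely many of the coordinates of $(J,\sigma)$, using finite degree). Hence $\Gamma$ is closed.

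Finally, $C_{W,s}$ is the image of the closed set $\Gamma\cap(\Omega_1\times\pi_W^{-1}\{s\})$ under the coordinate projection $\Omega_1\times\Omega_2\to\Omega_1$. Because $\Omega_2$ is compact (Tychonoff), this projection is a closed map, so $C_{W,s}$ is closed in $\Omega_1$ and in particular $\mathcal{F}_1$-measurable. Assembling the pieces, $\{J:|\mathcal{G}(J)|\ge k\}$ is an $F_\sigma$ set for each $k$ and $\{J:|\mathcal{G}(J)|=\infty\}$ is Borel, so $J\mapsto|\mathcal{G}(J)|$ is $\mathcal{F}_1$-measurable.

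I expect the only genuine subtlety to be the first reduction: one must be sure that restricting attention to the finite windows $W_m$ loses no information, i.e.\ that an infinite ground-state set is witnessed by $|\pi_{W_m}(\mathcal{G}(J))|\to\infty$ rather than hiding ``at infinity.'' This is exactly the content of the compactness of $\mathcal{G}(J)$ already noted above (two distinct configurations are separated by some finite window), so once that is in hand the remaining steps are routine point-set topology.
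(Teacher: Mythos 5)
Your proof is correct, and it takes a genuinely different route from the paper. The paper approximates $\mathcal{G}(J)$ by finite-volume ground states on boxes $\Lambda_n$ with (arbitrary) boundary conditions, shows the resulting events are measurable, and then argues via a compactness/diagonal step that $\bigcap_{n\ge m}\mathcal{J}(\sigma_m,n)$ is exactly the event that some infinite-volume ground state restricts to $\sigma_m$ on $\Lambda_m$; finally it identifies $|\mathcal{G}(J)|$ with $\sup_m \sum_{\sigma_m}F_{\sigma_m}(J)$. You instead work directly with the defining inequalities \eqref{eq:GSproperty}: you note that $\Gamma=\{(J,\sigma):\sigma\in\mathcal{G}(J)\}$ is closed in $\Omega_1\times\Omega_2$ (each constraint being a closed condition depending on finitely many coordinates), and then invoke the fact that the projection $\Omega_1\times\Omega_2\to\Omega_1$ is a closed map because $\Omega_2$ is compact. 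This gives the stronger conclusion that each $C_{W,s}$ is in fact closed, not merely Borel, and avoids the boundary-condition bookkeeping entirely. The paper's more hands-on construction of $\mathcal{J}(\sigma_m,\bar\sigma_n)$ and $F_{\sigma_m}(J)$ is not wasted, though: it is reused almost verbatim in the proof of Proposition~\ref{prop: mu mble}, so the authors gain efficiency there. Your $C_{W,s}$ are essentially the sets $\{F_{\sigma_m}=1\}$ of the paper, so the two approaches are compatible; yours just reaches the conclusion via the closed-projection theorem rather than a finite-volume approximation argument, and your counting identity $|\mathcal{G}(J)|=\lim_m|\pi_{W_m}(\mathcal{G}(J))|$ is the same in substance as the paper's \eqref{eq:numbergs}.
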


\begin{proof}
Consider a sequence of finite graphs $\Lambda_n\subset G$, a configuration $\sigma_n$ on $\Lambda_n$ and a configuration $\bar \sigma_n$ on the external boundary of $\Lambda_n$ (that is, all vertices that are not in $\Lambda_n$ but are adjacent to vertices in it). The condition that $\sigma_n$ is a ground state in $\Lambda_n$ with boundary conditions $\bar \sigma_n$ is a finite list of conditions of the form
\begin{equation}\label{eq:loopcondition}
\sum_{\{ x,y \} \in S} J_{xy} (\sigma_n)_x (\sigma_n)_y \geq 0 \mbox{ or } \sum_{\{ x,y \} \in S} J_{xy} (\sigma_n)_x (\bar \sigma_n)_y \geq 0
\end{equation}
for specific finite sets $S$ of edges. For any given $S$, the set of $J \in \Omega_1$ such that condition \eqref{eq:loopcondition} holds for fixed $\sigma_n$ and $\bar \sigma_n$ is then measurable (that is, it is in $\mathcal{F}_1$). Intersecting over all relevant sets $S$, we see that the following set is measurable:
\[
{\cal J}(\sigma_n,\bar \sigma_n) := \{ J ~:~ \sigma_n \mbox{ is a ground state in } \Lambda_n \mbox{ for the boundary condition } \bar \sigma_n\} \ .
\]

Next take $m < n$ and fixed configurations $\sigma_m$ on $\Lambda_m$, $\sigma_{m,n}$ on $\Lambda_n \setminus \Lambda_m$ and $\bar \sigma_n$ on the boundary of $\Lambda_n$. By a similar argument to the one given above, the set ${\cal J}(\sigma_m,\sigma_{m,n},\bar \sigma_n)$ of $J$ such that the concatenation of $\sigma_m$ and $\sigma_{m,n}$ is a ground state on $\Lambda_n$ with boundary condition $\bar \sigma_n$ is measurable. Taking the union over all $\sigma_{m,n}$ and $\bar \sigma_n$ for a fixed $\sigma_m$, we get that for $m<n$ and $\sigma_m$ fixed, the following set is measurable:
\[
{\cal J}(\sigma_m,n) := \{J ~:~ \mbox{ there is a ground state in } \Lambda_n \mbox{ (for some $\bar \sigma_n$)  that equals } \sigma_m \mbox{ on } \Lambda_m\}\ .
\]

If there exists a sequence of (possibly $J$-dependent) configurations $(\bar \sigma_n)$ such that there are ground states $(\sigma_n)$ on $\Lambda_n$ with boundary condition $\bar \sigma_n$ that converge to $\sigma$, then $\sigma$ is in $\mathcal{G}(J)$. Conversely, if $\sigma\in \mathcal{G}(J)$, such a sequence $(\sigma_n)$ exists by taking $\bar{\sigma}_n$ to be the restriction of $\sigma$ to the boundary. It follows that $\cap_{n\geq m} {\cal J}(\sigma_m,n)$ is the event that there is an infinite-volume ground state $\sigma$ for couplings $J$ that equals $\sigma_m$ on $\Lambda_m$. This event is thus measurable.

For fixed $m$ and a configuration $\sigma_m$ on $\Lambda_m$, let $F_{\sigma_m}(J)$ be the indicator of the event that there is an infinite-volume ground state $\sigma$ for couplings $J$ equal to $\sigma_m$ on $\Lambda_m$. By the above, it is $\F_1$-measurable. The proposition will then be proved once we show:
\begin{equation}\label{eq:numbergs}
|\mathcal{G}(J)|=\sup_m \sum_{\sigma_m} F_{\sigma_m}(J)\ .
\end{equation}
Here the sum is over all $\sigma_m$ on $\Lambda_m$. 
For any $m$, the sum $\sum_{\sigma_m} F_{\sigma_m}(J)$ equals the number of different $\sigma_m$'s that are equal to restrictions on $\Lambda_m$ of elements
of $\mathcal{G}(J)$ . So for each $m$,
\[
\sum_{\sigma_m} F_{\sigma_m}(J) \leq |\mathcal{G}(J)| 
\]
and the right side of \eqref{eq:numbergs} is at most $|\mathcal{G}(J)|$. To show equality in \eqref{eq:numbergs}, suppose first that $|\mathcal{G}(J)|$ is finite. We can choose $n$ so that the restriction 
 to $\Lambda_n$ of each element of $\mathcal{G}(J)$ is different. For this $n$, $\sum_{\sigma_n} F_{\sigma_n}(J) = |\mathcal{G}(J)|$ and \eqref{eq:numbergs} is established.
 If $|\mathcal{G}(J)|=\infty$, then for any $k\in\N$, we can find $n_k$ such that $\sum_{\sigma_{n_k}} F_{\sigma_{n_k}}(J)\geq k$. This is because we can take $\Lambda_n$ large enough
 so that there are at least $k$ elements of $\mathcal{G}(J)$ that are distinct on $\Lambda_n$. Taking the supremum over $k$ completes the proof of \eqref{eq:numbergs}.
\end{proof}

In the case $G=\Z^d$, it is easy to see that for any translation $T_a$ by a vector $a \in \Z^d$,
$|\mathcal{G}(J)|=|\mathcal{G}(T_a J)|$ where $(T_a J)_{xy} = J_{T_a(x)T_a(y)}$.
The ergodic theorem then implies that the random variable $|\mathcal{G}(J)|$ is constant
$\nu$-almost surely. The same holds when $G$ is the half-plane by considering only 
horizontal translations.

\begin{cor}
\label{cor: constant}
For $G=\Z^d$ or $G=\Z\times \N$, the number of ground states $|\mathcal{G}(J)|$
is a constant $\nu$-almost surely.
\end{cor}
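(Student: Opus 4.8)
The plan is to exhibit $J\mapsto|\mathcal{G}(J)|$ as a measurable function that is invariant under a measure-preserving, ergodic group action on $(\Omega_1,\mathcal{F}_1,\nu)$, and then to conclude by ergodicity. Measurability has already been proved in Proposition~\ref{prop:mble}, so only covariance and ergodicity remain to be supplied.

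First I would record the translation covariance of $\mathcal{G}$. For $G=\Z^d$ let $\Gamma=\Z^d$ act by the translations $T_a$, $a\in\Z^d$; for $G=\Z\times\N$ let $\Gamma=\Z$ act by horizontal translations only. In either case each $T_a$ is a graph automorphism of $G$. Letting $T_a$ act on configurations by $(T_a\sigma)_x=\sigma_{T_a(x)}$ and on couplings by $(T_aJ)_{xy}=J_{T_a(x)T_a(y)}$, one has $\partial(T_aB)=T_a(\partial B)$ for every finite $B\subseteq V$, and the substitution $x'=T_a(x),\,y'=T_a(y)$ gives
\[
\sum_{\{x,y\}\in\partial B}(T_aJ)_{xy}(T_a\sigma)_x(T_a\sigma)_y=\sum_{\{x',y'\}\in\partial(T_aB)}J_{x'y'}\sigma_{x'}\sigma_{y'}\ .
\]
Since $B\mapsto T_aB$ is a bijection of the finite subsets of $V$, the defining condition \eqref{eq:GSproperty} shows that $\sigma\in\mathcal{G}(J)\iff T_a\sigma\in\mathcal{G}(T_aJ)$, so $\sigma\mapsto T_a\sigma$ is a bijection $\mathcal{G}(J)\to\mathcal{G}(T_aJ)$ and $|\mathcal{G}(T_aJ)|=|\mathcal{G}(J)|$ for all $a\in\Gamma$.

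Next I would note that, since the couplings are i.i.d., $\nu$ is a product measure on $\Omega_1=\R^E$ and each $T_a$ merely permutes the edge-coordinates while preserving the common marginal, hence $\nu$ is $\Gamma$-invariant; moreover the action is mixing, and a fortiori ergodic: for any cylinder events $U,V$ depending on finitely many couplings, $T_aV$ and $U$ involve disjoint sets of edges for all but finitely many $a\in\Gamma$ (the orbits of the edge set are infinite even in the half-plane case, where only horizontal shifts are used), so $\nu(U\cap T_aV)\to\nu(U)\nu(V)$. Combining these facts: for each $k\in\N\cup\{\infty\}$ the event $\{|\mathcal{G}(J)|=k\}$ is $\mathcal{F}_1$-measurable and $\Gamma$-invariant, hence has $\nu$-probability $0$ or $1$; as these events partition $\Omega_1$, exactly one of them has probability one, which is the claim (the constant lying in $\{2,3,\dots\}\cup\{\infty\}$, since $\mathcal{G}(J)$ is always nonempty and closed under the global flip). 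I do not anticipate a real obstacle here; the only point requiring slight care is the half-plane, where covariance is available only for horizontal translations, but the ergodicity argument above still applies to that subgroup.
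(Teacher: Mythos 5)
Your proposal is correct and takes essentially the same approach as the paper: the paper also observes that $|\mathcal{G}(T_aJ)|=|\mathcal{G}(J)|$ by translation covariance (using only horizontal translations in the half-plane case) and then invokes the ergodic theorem for the i.i.d.\ product measure $\nu$ together with the measurability from Proposition~\ref{prop:mble}. Your write-up merely spells out the covariance bijection and the mixing/ergodicity of the shift action that the paper leaves implicit.
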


The next result shows that if $|\mathcal{G}(J)|<\infty$ then the uniform measure $\mu_J$ is a random variable over $\mathcal{F}_1$.

\begin{prop}
\label{prop: mu mble}
Let $B \in {\cal F}_2$ and assume that $|\mathcal{G}(J)|<\infty$. The map
\[
J \mapsto \mu_J(B)
\]
is $\mathcal{F}_1$-measurable. Similarly, if $B'$ is a Borel set in $\Omega_2\times \Omega_2$, then the map
$J \mapsto \mu_J\times \mu_J(B')$ is  $\mathcal{F}_1$-measurable.
\end{prop}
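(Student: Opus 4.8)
The plan is to express $\mu_J(B)$, for $B$ ranging over a generating $\pi$-system, in terms of the $\mathcal{F}_1$-measurable functions $F_{\sigma_m}$ built in the proof of Proposition~\ref{prop:mble}, and then to pass to all $B\in\mathcal{F}_2$ by a Dynkin-system argument. Throughout we restrict to the $\mathcal{F}_1$-measurable set $\{J:|\mathcal{G}(J)|<\infty\}$, on which $\mu_J$ is defined, and recall that $\mathcal{G}(J)\neq\emptyset$ for every $J$. Let $(\Lambda_m)$ be the increasing sequence of finite subsets of $V$ exhausting it that was fixed in the proof of Proposition~\ref{prop:mble}.

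\emph{Step 1 (cylinder sets).} Let $B\in\mathcal{F}_2$ be a cylinder set whose occurrence depends only on the spins in $\Lambda_{m_0}$, and for $m\geq m_0$ let $B_m\subseteq\{-1,+1\}^{\Lambda_m}$ be the set of restrictions to $\Lambda_m$ of configurations in $B$, so that $\sigma\in B$ iff $\sigma|_{\Lambda_m}\in B_m$. Put $N_m(J,B):=\sum_{\sigma_m\in B_m}F_{\sigma_m}(J)$, a finite sum of $\mathcal{F}_1$-measurable functions, and note $N_m(J,\Omega_2)=\sum_{\sigma_m}F_{\sigma_m}(J)\geq 1$. I claim that for every $J$ with $|\mathcal{G}(J)|<\infty$,
\[
\mu_J(B)=\lim_{m\to\infty}\frac{N_m(J,B)}{N_m(J,\Omega_2)}\ .
\]
Indeed, as in the proof of Proposition~\ref{prop:mble}, finiteness of $\mathcal{G}(J)$ gives an $m_1=m_1(J)\geq m_0$ such that the restrictions to $\Lambda_m$ of the elements of $\mathcal{G}(J)$ are pairwise distinct for all $m\geq m_1$. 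Since $F_{\sigma_m}(J)=1$ exactly when $\sigma_m=\sigma|_{\Lambda_m}$ for some $\sigma\in\mathcal{G}(J)$, the map $\sigma\mapsto\sigma|_{\Lambda_m}$ is for such $m$ a bijection from $\mathcal{G}(J)$ onto $\{\sigma_m:F_{\sigma_m}(J)=1\}$; combined with $\sigma\in B\iff\sigma|_{\Lambda_m}\in B_m$ this gives $N_m(J,B)=\#\{\sigma\in\mathcal{G}(J):\sigma\in B\}$ and $N_m(J,\Omega_2)=|\mathcal{G}(J)|$, so the ratio equals $\mu_J(B)$ for every $m\geq m_1$. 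The limit therefore exists and equals $\mu_J(B)$, which is thus $\mathcal{F}_1$-measurable as a pointwise limit of $\mathcal{F}_1$-measurable functions.

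\emph{Steps 2 and 3 (extension).} The family $\mathcal{D}:=\{B\in\mathcal{F}_2: J\mapsto\mu_J(B)\text{ is }\mathcal{F}_1\text{-measurable}\}$ is a Dynkin system: $\Omega_2\in\mathcal{D}$; if $B'\subseteq B$ are in $\mathcal{D}$ then $\mu_J(B\setminus B')=\mu_J(B)-\mu_J(B')$ so $B\setminus B'\in\mathcal{D}$; and if $B_n\uparrow B$ with $B_n\in\mathcal{D}$ then $\mu_J(B)=\lim_n\mu_J(B_n)$ so $B\in\mathcal{D}$. By Step 1, $\mathcal{D}$ contains the $\pi$-system of cylinder sets, which generates $\mathcal{F}_2$, hence $\mathcal{D}=\mathcal{F}_2$ by Dynkin's $\pi$--$\lambda$ theorem. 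For the product statement, $\mu_J\times\mu_J(B_1\times B_2)=\mu_J(B_1)\,\mu_J(B_2)$ is measurable for product cylinders $B_1\times B_2$ by Step 1, and since $\Omega_2$ is compact and second countable, $\mathcal{B}(\Omega_2\times\Omega_2)=\mathcal{F}_2\otimes\mathcal{F}_2$ is generated by the $\pi$-system of such product cylinders; the same Dynkin-system argument applied to $\mu_J\times\mu_J$ finishes the proof.

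The one delicate point is Step 1: at a fixed scale $m$ the ratio $N_m(J,B)/N_m(J,\Omega_2)$ need not equal $\mu_J(B)$, since distinct ground states can share a restriction to $\Lambda_m$, and the scale $m_1$ past which this stops happening depends on $J$; letting $m\to\infty$ is precisely what absorbs this $J$-dependence while preserving measurability. Everything else is routine bookkeeping with Dynkin's theorem.
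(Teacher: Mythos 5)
Your proof is correct, and its core strategy matches the paper's: reduce to cylinder events by a standard extension argument, then express $\mu_J$ of a cylinder in terms of $\mathcal{F}_1$-measurable counts of ground states matching local patterns. The implementation of the cylinder step differs. The paper introduces a new constrained count $F_{s_A}(J)$ (the number of $\sigma\in\mathcal{G}(J)$ agreeing with $s_A$ on $A$) and proves its measurability by re-running the $\sup_n\sum$ construction from Proposition~\ref{prop:mble}, then writes $\mu_J(B)=F_{s_A}(J)/|\mathcal{G}(J)|$ exactly. You instead re-use the indicators $F_{\sigma_m}$ already produced in Proposition~\ref{prop:mble} and recover $\mu_J(B)$ as a limit of ratios $N_m(J,B)/N_m(J,\Omega_2)$, with the limit absorbing the $J$-dependent scale $m_1(J)$ past which ground states separate on $\Lambda_m$. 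Both are valid; your route is slightly more economical (no need to reprove measurability for a modified count, since finite sums and ratios of the existing $F_{\sigma_m}$'s are automatically measurable and the eventual constancy of the ratio in $m$ does the rest), while the paper's route has the virtue of giving the closed-form identity $\mu_J(B)=F_{s_A}(J)/|\mathcal{G}(J)|$ directly without a limit. Your Dynkin-system step also makes explicit what the paper compresses into the phrase ``standard approximation,'' which is a useful clarification; the check that $N_m(J,\Omega_2)\geq 1$ (from $\mathcal{G}(J)\neq\emptyset$) and the note that $\mathcal{B}(\Omega_2\times\Omega_2)=\mathcal{F}_2\otimes\mathcal{F}_2$ by second countability are both correctly placed.
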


\begin{proof}
By a standard approximation, it is sufficient
to prove the statement for $B$ of the form
\[
B = \{ \sigma~:~ \sigma = s_A \mbox{ on } A\}
\]
for some finite set $A$ and fixed configuration $s_A$ on $A$. 

Take a sequence of finite graphs $\Lambda_n$ converging to $G$.
We define
\[
F_{s_A}(J) =  \mbox{ number of } \sigma\in \mathcal{G}(J) \mbox{'s that equal } s_A \mbox{ on } A\ .
\]
Note that $\mu_J(B)$ is simply $F_{s_A}(J)$ divided by $|\mathcal{G}(J)|$. The variable $\mathcal{G}(J)$ is $\F_1$-measurable by Proposition \ref{prop:mble}. 
Thus it remains to show that $F_{s_A}(J)$ is also.

Exactly as in the last proof, if $n$ is so large that $\Lambda_n$ contains $A$ and if $s_{A,n}$ is any fixed spin configuration on $\Lambda_n \setminus A$, then the set ${\cal J}(s_A,s_{A,n})$ of all $J$ such that there is an element of $\mathcal{G}(J)$ that (a) equals $s_A$ on $A$ and (b) equals $s_{A,n}$ on $\Lambda_n \setminus A$ is measurable. Let $F_{s_A,s_{A,n}}(J)$ be the indicator of the event ${\cal J}(s_A,s_{A,n})$ and consider the random variable
\[
\sup_n \sum_{s_{A,n}} F_{s_A,s_{A,n}}(J)\ .
\]
Here the supremum is over all $n$ such that $A \subseteq \Lambda_n$. 
The same reasoning to prove \eqref{eq:numbergs} shows that $F_{s_A}(J)$ is equal to the above and is thus measurable. This completes the proof of the first claim.
The second assertion is implied by the first one since by a standard approximation, any measurable function on $\Omega_2\times\Omega_2$ can be approximated by linear combinations of indicator functions of sets of the form
$$
B_A:=\{(\sigma,\sigma'): \sigma=s_A \text{ on $A$ }, \sigma'=s_{A'} \text{ on $A'$ }\}
$$
for two finite sets $A$ and $A'$ of $G$. Since $\mu_J\times \mu_J(B_A)$ is equal to the product of the $\mu_J$-probability of each coordinate, measurability follows from the first part of the proposition.
\end{proof}

\subsection{Properties of the set of ground states}
In this section, we establish some elementary properties of the dependence of the set of ground states $\mathcal{G}(J)$ 
on a finite number of couplings.

Fix an edge $e=\{x,y\}$. We will sometimes abuse notation and write for simplicity
$$
J_e:=J_{xy} ~~\text{and}~~ \sigma_e:=\sigma_x\sigma_y \ .
$$
We are interested in studying how $\mathcal{G}(J)$ varies when $J_e$ is modified.
For simplicity, we will fix all other couplings and write $\mathcal{G}(J_e)$ for the set of ground states to stress the dependence on $J_e$.
From the definition \eqref{eq:GSproperty}, it is easy to see that if $\sigma\in \mathcal{G}(J_e)$ and $\sigma_e=+1$, then
$\sigma$ remains a ground state for coupling values greater than $J_e$. More generally:
\begin{lem}
\label{lem: monotone}
Fix an edge $e=\{x,y\}$. If $J_e\leq J_e'$ then
$$
\begin{aligned}
&\mathcal{G}(J_e)\cap \{\sigma: \sigma_e=+1\}\subseteq \mathcal{G}(J'_e)\cap \{\sigma: \sigma_e=+1\}\\
&\mathcal{G}(J_e)\cap \{\sigma: \sigma_e=-1\}\supseteq \mathcal{G}(J'_e)\cap \{\sigma: \sigma_e=-1\}
\end{aligned}
$$
\end{lem}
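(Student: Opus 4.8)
The plan is to verify the defining inequality \eqref{eq:GSproperty} directly for every finite set, exploiting that only the single coupling $J_e$ has been altered. Fix $\sigma\in\mathcal{G}(J_e)$ with $\sigma_e=+1$ and let $J_e'\geq J_e$; I want to conclude $\sigma\in\mathcal{G}(J_e')$. Take an arbitrary finite $A\subset V$. Since $\partial A\subset E$ is a set of edges, either $e\in\partial A$ or $e\notin\partial A$. In the second case the couplings appearing in $\sum_{\{u,v\}\in\partial A}J'_{uv}\sigma_u\sigma_v$ are all unchanged, so this sum equals $\sum_{\{u,v\}\in\partial A}J_{uv}\sigma_u\sigma_v\geq 0$. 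In the first case,
\[
\sum_{\{u,v\}\in\partial A}J'_{uv}\sigma_u\sigma_v=\sum_{\{u,v\}\in\partial A}J_{uv}\sigma_u\sigma_v+(J_e'-J_e)\sigma_x\sigma_y\geq 0,
\]
because the first term is nonnegative by $\sigma\in\mathcal{G}(J_e)$ and the correction term is $(J_e'-J_e)\sigma_e=J_e'-J_e\geq 0$. Hence $\sigma\in\mathcal{G}(J_e')$. Since modifying $J_e$ does not change the configuration $\sigma$, the event $\{\sigma_e=+1\}$ is preserved, giving the first inclusion.

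The second inclusion is the symmetric statement and I would prove it by the same one-line computation: if $\sigma\in\mathcal{G}(J_e')$ with $\sigma_e=-1$ and $J_e\leq J_e'$, then for a finite $A$ with $e\in\partial A$ the sum $\sum_{\{u,v\}\in\partial A}J_{uv}\sigma_u\sigma_v$ differs from $\sum_{\{u,v\}\in\partial A}J'_{uv}\sigma_u\sigma_v\geq 0$ by $(J_e-J_e')\sigma_e=J_e'-J_e\geq 0$, and for $e\notin\partial A$ the two sums agree; thus $\sigma\in\mathcal{G}(J_e)$, which is exactly $\mathcal{G}(J_e')\cap\{\sigma_e=-1\}\subseteq\mathcal{G}(J_e)\cap\{\sigma_e=-1\}$.

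There is no real obstacle here; the only care needed is the bookkeeping that a given edge occurs in $\partial A$ with multiplicity at most one, so the correction term carries exactly one factor $J_e'-J_e$, weighted by the sign $\sigma_e$. Conceptually the lemma just records that raising $J_e$ adds a nonnegative quantity to each ground-state constraint touching $e$ precisely when $\sigma_e=+1$ (and subtracts one when $\sigma_e=-1$), so such a move can only relax the constraints for the corresponding configurations.
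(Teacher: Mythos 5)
Your proof is correct and is exactly the direct verification the paper has in mind: the paper does not write out a proof, remarking only that the monotonicity is ``easy to see'' from the definition \eqref{eq:GSproperty}, and your computation of the sign of the correction term $(J_e'-J_e)\sigma_e$ for each finite $A$ is precisely that observation made explicit.
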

In view of the above monotonicity of the set of ground states, it is natural to introduce 
 the {\it critical value} of $\sigma\in \mathcal{G}(J_e)$ at $e$. Namely,
we define the critical value $C_e$ as
$$
\begin{aligned}
C_e(J,\sigma):=\begin{cases}
\inf\{J_e: \sigma\in \mathcal{G}(J_e)\} \text{ if $\sigma_e=+1$;}\\
\sup\{J_e: \sigma\in \mathcal{G}(J_e)\}\text{ if $\sigma_e=-1$.}
\end{cases}
\end{aligned}
$$
For future reference, we remark that from the definition,
\begin{equation}
\label{eqn: equiv}
\begin{aligned}
&\sigma\in \mathcal{G}(J_e)\text{ and }\sigma_e=+1  \Longrightarrow J_e\geq C_e(J,\sigma)\\
&\sigma\in \mathcal{G}(J_e)\text{ and }\sigma_e=-1 \Longrightarrow J_e\leq C_e(J,\sigma)\ .
\end{aligned}
\end{equation}

An elementary correspondence exists between the critical values and the energy required to flip finite sets of spins.
\begin{lem}
\label{lem: critical formula}
Let $\sigma\in \mathcal{G}(J_e)$. Then
\begin{equation}
\label{eqn: c_e}
\sigma_eC_e(J,\sigma)=-\inf_{A~:~e \in \partial A}\sum_{\substack{\{z,w\}\in \partial A\\ \{z,w\}\neq e}} J_{zw}\sigma_z\sigma_w\ ,
\end{equation}

In particular, for a given $\sigma$, $C_e(J,\sigma)$ does not depend on $J_e$.
\end{lem}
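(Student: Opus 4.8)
The plan is to unwind the definition of the ground state property \eqref{eq:GSproperty}, separating out the contribution of the single edge $e$ from the rest of each boundary sum. Fix $\sigma \in \mathcal{G}(J_e)$ and suppose first that $\sigma_e = +1$. For a finite set $A$ with $e \in \partial A$, the ground state inequality $\sum_{\{z,w\}\in\partial A} J_{zw}\sigma_z\sigma_w \geq 0$ can be rewritten as $J_e \sigma_e + \sum_{\{z,w\}\in\partial A,\ \{z,w\}\neq e} J_{zw}\sigma_z\sigma_w \geq 0$, i.e.\ since $\sigma_e=+1$, as $J_e \geq -\sum_{\{z,w\}\in\partial A,\ \{z,w\}\neq e} J_{zw}\sigma_z\sigma_w$. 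For finite sets $A$ with $e \notin \partial A$, the corresponding inequality does not involve $J_e$ at all and so imposes no constraint on which values of $J_e$ keep $\sigma$ a ground state (it holds for one value of $J_e$ iff it holds for all). Therefore $\sigma \in \mathcal{G}(J_e')$ holds precisely for those $J_e'$ satisfying $J_e' \geq \sup_{A:\, e\in\partial A}\bigl(-\sum_{\{z,w\}\in\partial A,\ \{z,w\}\neq e} J_{zw}\sigma_z\sigma_w\bigr) = -\inf_{A:\, e\in\partial A}\sum_{\{z,w\}\in\partial A,\ \{z,w\}\neq e} J_{zw}\sigma_z\sigma_w$. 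Taking the infimum over admissible $J_e'$ gives $C_e(J,\sigma)$ equal to the right-hand side of \eqref{eqn: c_e}, which is exactly $\sigma_e C_e(J,\sigma)$ since $\sigma_e=+1$.

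The case $\sigma_e = -1$ is symmetric: the inequality for $A$ with $e \in \partial A$ reads $-J_e + \sum_{\{z,w\}\in\partial A,\ \{z,w\}\neq e} J_{zw}\sigma_z\sigma_w \geq 0$, i.e.\ $J_e \leq \sum_{\{z,w\}\in\partial A,\ \{z,w\}\neq e} J_{zw}\sigma_z\sigma_w$, so $\sigma \in \mathcal{G}(J_e')$ iff $J_e' \leq \inf_{A:\, e\in\partial A}\sum_{\{z,w\}\in\partial A,\ \{z,w\}\neq e} J_{zw}\sigma_z\sigma_w$, and taking the supremum over such $J_e'$ identifies $C_e(J,\sigma)$ with that infimum, which equals $\sigma_e C_e(J,\sigma) = -C_e(J,\sigma)$ times $(-1)$; organizing signs, $\sigma_e C_e(J,\sigma) = -\inf_{A:\, e\in\partial A}\sum J_{zw}\sigma_z\sigma_w$ again. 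The final assertion that $C_e(J,\sigma)$ does not depend on $J_e$ is then immediate, since the right-hand side of \eqref{eqn: c_e} is a function only of $\sigma$ and of the couplings $J_{zw}$ with $\{z,w\}\neq e$.

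The one point requiring a little care — the main (minor) obstacle — is the well-definedness of the infimum and the passage between "holds for one $J_e$" and "holds for the claimed range of $J_e$": one should check that the set $\{J_e : \sigma \in \mathcal{G}(J_e)\}$ is nonempty (it contains the original value by hypothesis) and that it is genuinely a half-line, which is exactly what Lemma~\ref{lem: monotone} provides. One should also note the infimum on the right of \eqref{eqn: c_e} could a priori be $-\infty$ when $\sigma_e=+1$ (giving $C_e = -\infty$, meaning $\sigma$ stays a ground state for all $J_e$), or $+\infty$ when $\sigma_e = -1$; these degenerate cases are consistent with the conventions for $\inf$/$\sup$ over the coupling values in the definition of $C_e$, so no separate treatment is needed beyond remarking on it.
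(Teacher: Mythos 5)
Your argument is correct and follows essentially the same idea as the paper's proof: unwind the ground state condition \eqref{eq:GSproperty} for finite $A$ with $e \in \partial A$, isolate the $J_e\sigma_e$ term, and observe that the remaining constraints define the set $\{J_e : \sigma \in \mathcal{G}(J_e)\}$ as a half-line whose endpoint is $C_e(J,\sigma)$ — you present this as a direct equivalence, whereas the paper reaches the same identity by a two-sided contradiction. One side remark is off by a sign: when $\sigma_e=+1$, an infimum of $-\infty$ on the right of \eqref{eqn: c_e} would force $C_e=+\infty$ (not $-\infty$), i.e.\ an empty set of admissible $J_e$, which is already excluded by the hypothesis $\sigma\in\mathcal{G}(J_e)$; the other extreme, infimum $=+\infty$, cannot occur since it is an infimum of real numbers over the nonempty family of $A$'s with $e\in\partial A$, so in fact neither degenerate case arises.
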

In this section, we will often omit the dependence on $J$ in the notation and write $C_e(\sigma)$ for simplicity.
From the above result, we see that this notation
is consistent with the fact that all couplings other than $J_e$ are fixed in this section.
\begin{proof}
The independence assertion is straightforward from the expression.
We prove the equation in the case of $\sigma_e=+1$. The other case is similar. 
Let $-\widetilde{C}_e(\sigma)$ be the right side of \eqref{eqn: c_e}.
If $C_e(\sigma)+\widetilde C_e(\sigma)>0$,
 there exists $\delta>0$ such that
$$
C_e(\sigma)-\delta+ \inf_{A: e\in \partial A}\sum_{\substack{\{z,w\}\in \partial A\\ \{z,w\}\neq e}} J_{zw}\sigma_z\sigma_w>0\ .
$$
In particular, $\sigma\in \mathcal{G}(J'_e)$ for $J'_e=C_e(\sigma)-\delta$, contradicting $C_e(\sigma)$ as the infimum of such values.
On the other hand if $C_e(\sigma)+\widetilde C_e(\sigma)<0$, there must exist a finite set $A$ such that
$$
C_e(\sigma)+\sum_{\substack{\{z,w\}\in \partial A\\ \{z,w\}\neq e}} J_{zw}\sigma_z\sigma_w<0\ .
$$
In particular this would hold for $C_e(\sigma)$ replaced by some $J_e>C_e(\sigma)$, contradicting 
the definition of $C_e(\sigma)$, because we should have $\sigma\in \mathcal{G}(J_e)$ for all $J_e>C_e(\sigma)$.
\end{proof}

The distance $|J_e-C_e(\sigma)|$ from $J_e$ to the critical value is called the {\it flexibility of $e$} and is denoted $F_e(\sigma)$. (This quantity was first introduced in \cite{NS01}.) From above, it has a useful representation:
\begin{equation}
\label{eqn: flex}
F_e(\sigma):=|J_e-C_e(\sigma)|=\inf_{A:e \in \partial A}\sum_{\{z,w\}\in \partial A} J_{zw}\sigma_z\sigma_w\ .
\end{equation}

In the same spirit as the critical values, for any edge $e$ and $\sigma \in \mathcal{G}(J_e)$, we define the set of {\it critical droplets} for $e$ in $\sigma$. 
These are the limit sets of the infimizing sequences of finite sets in the expression \eqref{eqn: c_e} of the critical value $C_e(\sigma)$.
Precisely, if $(\Lambda_n)$ is a sequence of vertex sets, we say that $\Lambda_n \to \Lambda$ if each vertex $v \in V$ is 
in only finitely many of the sets $\Lambda_n \Delta \Lambda$ (here $\Delta$ denotes the symmetric difference of sets). 
We will say that $\Lambda$ is a critical droplet for $e$ in $\sigma$ if there exists a sequence of finite vertex sets $(\Lambda_n)$ such that $\Lambda_n \to \Lambda$, 
$e \in \partial \Lambda_n$ for all $n$ and 
\[
-\sum_{\substack{\{x,y\} \in \partial \Lambda_n\\ \{x,y\} \neq e}} J_{xy} \sigma_x\sigma_y \to \sigma_eC_e(\sigma) \text{ as } n \to \infty\ .
\]
Write $CD_e(\sigma)$ for the set of critical droplets of $e$ in $\sigma$. By compactness, this set is nonempty.

Since the critical values are values of $J_e$ where there is a change in the set $\mathcal{G}(J_e)$,
it will be useful to get bounds on them that are functions of the couplings only (not of $\sigma\in \mathcal{G}(J_e)$).
In this spirit, similarly to \cite{NS01}, we define the {\it super-satisfied value} for an edge $e=\{x,y\}$ as
\begin{equation}
\label{eqn: supersat}
\mathcal{S}_e:=\min \left\{\sum_{\substack{z \neq y\\ \{x,z\}\in E}}|J_{xz}|, \sum_{\substack{z \neq x\\ \{y,z\}\in E}} |J_{yz}| \right\}\ .
\end{equation}
We will say that an edge $e$ is {\it super-satisfied} if $|J_e|> \mathcal{S}_e$.
The terminology is explained by the following fact:
by taking $A=\{x\}$ and $A=\{y\}$ in \eqref{eq:GSproperty}, one must have
\begin{equation}
\label{eq: supersat sign}
\begin{aligned}
J_e>\mathcal{S}_e &\Longrightarrow \sigma_e=+1 \text{ for all $\sigma\in \mathcal{G}(J_e)$}\\
J_e<-\mathcal{S}_e &\Longrightarrow \sigma_e=-1 \text{ for all $\sigma\in \mathcal{G}(J_e)$}\ .
\end{aligned}
\end{equation}
Moreover, for the same choice of $A$, we get from Lemma~\ref{lem: critical formula}
\begin{equation}
\label{eq: critical first bound}
C_e(\sigma)\geq-\mathcal{S}_e \text{ if $\sigma_e=+1$} \text{ and } C_e(\sigma)\leq\mathcal{S}_e \text{ if $\sigma_e=-1$.}
\end{equation}
Our next goal is to prove that in fact $|C_e(\sigma)|\leq \mathcal{S}_e$ (cf. Corollary \ref{cor: bound}). This is done by 
establishing a correspondence between the two following sets:
$$
\begin{aligned}
\mathcal{G}_{+_e}&=\{\sigma\in \Omega_2: \sigma_e=+1, \forall A\subset V \text{ finite with $e\notin\partial A$, }\sum_{\{x,y\}\in\partial A}J_{xy}\sigma_x\sigma_y\geq 0\}\ ;\\
\mathcal{G}_{-_e}&=\{\sigma\in \Omega_2: \sigma_e=-1, \forall A\subset V \text{ finite with $e\notin\partial A$, }\sum_{\{x,y\}\in\partial A}J_{xy}\sigma_x\sigma_y\geq 0\}\ .
\end{aligned}
$$
In other words, $\mathcal{G}_{\pm_e}$ are the sets of ground states on the graph $G$ minus the edge $e$, where the spins of the vertices of $e$ are restricted to have the same/opposite sign.
Note that these sets depend on the couplings but not on $J_e$.
Clearly, if $\sigma\in \mathcal{G}(J_e)$ then either $\sigma\in \mathcal{G}_{+_e}$ or $\sigma\in \mathcal{G}_{-_e}$ depending on its sign at $e$.
Moreover by \eqref{eq: supersat sign}, if $J_e>\mathcal{S}_e$, then $\mathcal{G}(J_e)\subseteq \mathcal{G}_{+e}$ and if $J_e<-\mathcal{S}_e$, then $\mathcal{G}(J_e)\subseteq \mathcal{G}_{-e}$.
Equality is derived in Corollary \ref{cor: supersat3} from the following correspondence.
\begin{prop}
\label{prop: pair}
For $\sigma\in \mathcal{G}_{+_e}$ and $\Lambda \in CD_e(\sigma)$, consider $\widetilde \sigma$  where $\sigma\Delta\widetilde \sigma=\partial \Lambda$, that is
\begin{equation}
\label{eq:mapping}
\widetilde \sigma_v = \begin{cases}
\sigma_v & v \notin \Lambda \\
-\sigma_v & v \in \Lambda
\end{cases}\ .
\end{equation}
Then $\widetilde \sigma\in \mathcal{G}_{-_e}$ and $C_e(\widetilde \sigma)\geq C_e(\sigma)$.
A similar statement holds for $\sigma \in \mathcal{G}_{-_e}$ with $\widetilde \sigma \in \mathcal{G}_{+_e}$ and $C_e(\widetilde \sigma) \leq C_e(\sigma)$.
\end{prop}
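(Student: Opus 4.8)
\section*{Proof proposal}

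My plan is to reduce both halves of the proposition to local identities for the \emph{finitely} twisted configurations $\sigma^{(\Lambda_n)}$ (meaning $\sigma$ with the spins on $\Lambda_n$ reversed) and then pass to the limit $n\to\infty$. For a finite vertex set $A$ and $\tau\in\Omega_2$ write $\Phi_A(\tau):=\sum_{\{z,w\}\in\partial A}J_{zw}\tau_z\tau_w$, always a finite sum since $G$ has finite degree; then the defining conditions of $\mathcal{G}_{\pm_e}$ and the formula of Lemma~\ref{lem: critical formula} for $C_e$ are all phrased through the $\Phi_A$'s. The one algebraic input is that the edge boundary is $\mathbb{F}_2$-linear, $\partial(A\Delta B)=\partial A\,\Delta\,\partial B$, which yields at once the cocycle relation $\Phi_{A\Delta B}(\tau)=\Phi_B(\tau)+\Phi_A\big(\tau^{(B)}\big)$. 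Fix the infimizing sequence $(\Lambda_n)$ witnessing $\Lambda\in CD_e(\sigma)$: each $\Lambda_n$ is finite, $e\in\partial\Lambda_n$, $\Lambda_n\to\Lambda$, and $-\sum_{\{z,w\}\in\partial\Lambda_n,\ \{z,w\}\ne e}J_{zw}\sigma_z\sigma_w\to\sigma_eC_e(\sigma)$. Since $\sigma\in\mathcal{G}_{+_e}$ we have $\sigma_e=+1$; also $C_e(\sigma)$ is finite (bounded below by $-\mathcal{S}_e$ through \eqref{eq: critical first bound}, and above by a routine truncation: delete an endpoint of $e$ from any test set to fall back on an allowed condition), so restoring the $e$-term gives $\Phi_{\Lambda_n}(\sigma)\to J_e-C_e(\sigma)=:M\in\R$. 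Finally, $e\in\partial\Lambda_n$ for all $n$ together with $\Lambda_n\to\Lambda$ forces $e\in\partial\Lambda$, hence $\widetilde\sigma_e=-\sigma_e=-1$.

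The transfer step is: for fixed finite $A$ the set of vertices incident to $\partial A$ is finite, so by $\Lambda_n\to\Lambda$ the configurations $\sigma^{(\Lambda_n)}$ and $\widetilde\sigma$ agree there for all large $n$, whence $\Phi_A(\widetilde\sigma)=\Phi_A(\sigma^{(\Lambda_n)})$ eventually; combining with the cocycle relation ($B=\Lambda_n$),
\[
\Phi_A(\widetilde\sigma)=\lim_{n\to\infty}\big(\Phi_{A\Delta\Lambda_n}(\sigma)-\Phi_{\Lambda_n}(\sigma)\big)\ .
\]
Since $e\in\partial\Lambda_n$ always, parity shows $e\in\partial(A\Delta\Lambda_n)$ exactly when $e\notin\partial A$, and $e\notin\partial(A\Delta\Lambda_n)$ exactly when $e\in\partial A$; this is the switch that selects which property of $\sigma$ we feed in. For the first assertion, since $\widetilde\sigma_e=-1$ it remains to check $\Phi_A(\widetilde\sigma)\ge0$ for finite $A$ with $e\notin\partial A$; then $e\in\partial(A\Delta\Lambda_n)$, so Lemma~\ref{lem: critical formula} (using $\sigma_e=+1$) gives $\sum_{\{z,w\}\in\partial(A\Delta\Lambda_n),\ \{z,w\}\ne e}J_{zw}\sigma_z\sigma_w\ge-C_e(\sigma)$, i.e.\ $\Phi_{A\Delta\Lambda_n}(\sigma)\ge J_e-C_e(\sigma)=M$, and the displayed limit together with $\Phi_{\Lambda_n}(\sigma)\to M$ yields $\Phi_A(\widetilde\sigma)\ge M-M=0$, so $\widetilde\sigma\in\mathcal{G}_{-_e}$. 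For the second assertion, since $\widetilde\sigma_e=-1$ Lemma~\ref{lem: critical formula} gives $C_e(\widetilde\sigma)=\inf_{A:\,e\in\partial A}\sum_{\{z,w\}\in\partial A,\ \{z,w\}\ne e}J_{zw}\widetilde\sigma_z\widetilde\sigma_w=\inf_{A:\,e\in\partial A}\big(\Phi_A(\widetilde\sigma)+J_e\big)$, so $C_e(\widetilde\sigma)\ge C_e(\sigma)$ follows once $\Phi_A(\widetilde\sigma)\ge C_e(\sigma)-J_e=-M$ for every finite $A$ with $e\in\partial A$; for such $A$ we have $e\notin\partial(A\Delta\Lambda_n)$, so $A\Delta\Lambda_n$ is an allowed test set for $\sigma\in\mathcal{G}_{+_e}$ and $\Phi_{A\Delta\Lambda_n}(\sigma)\ge0$, and the displayed limit gives $\Phi_A(\widetilde\sigma)\ge0-M=-M$.

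The statement for $\sigma\in\mathcal{G}_{-_e}$ follows by rerunning the identical argument with $\sigma_e=-1$ (hence $\widetilde\sigma_e=+1$): this only reverses the sign of every $e$-contribution, so now $\Phi_{\Lambda_n}(\sigma)\to-(J_e-C_e(\sigma))$, and correspondingly turns the conclusion $C_e(\widetilde\sigma)\ge C_e(\sigma)$ into $C_e(\widetilde\sigma)\le C_e(\sigma)$. The step demanding the most care, and the main obstacle, is managing the possibly infinite critical droplet: one must verify that each condition on $\widetilde\sigma$ over a finite region is the eventual value of the corresponding condition on the finitely twisted $\sigma^{(\Lambda_n)}$ (this is where $\Lambda_n\to\Lambda$ and finiteness of $\partial A$ are used), keep the signs in the cocycle identity straight, and, above all, recognize that whether or not $e\in\partial(A\Delta\Lambda_n)$ is precisely the switch deciding whether one invokes the ground-state inequality for $\sigma$ (when $e\in\partial A$) or the sharp bound coming from the critical value (when $e\notin\partial A$). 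A minor auxiliary point, needed so that $M$ is a genuine real number, is the finiteness of $C_e(\sigma)$ on $\mathcal{G}_{+_e}$.
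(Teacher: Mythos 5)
Your proof is correct and follows essentially the same route as the paper's: both rest on the closure of the class $\{\partial A\}$ under symmetric difference and on the identity $\Phi_A(\widetilde\sigma)=\Phi_{A\Delta\Lambda_n}(\sigma)-\Phi_{\Lambda_n}(\sigma)$ for large $n$, with $\Phi_{\Lambda_n}(\sigma)$ converging along the critical droplet sequence. The only cosmetic difference is that the paper absorbs the edge $e$ by resetting $J_e$ to $C_e(\sigma)$ and invoking $\sigma\in\mathcal{G}(\widetilde J)$ uniformly for all test sets, whereas you keep $J_e$ fixed and case-split on whether $e\in\partial A$ via the parity of $e$ in $\partial A\,\Delta\,\partial\Lambda_n$ --- which trades the coupling modification for explicit bookkeeping and has the minor merit of making the finiteness of $C_e(\sigma)$ an explicit step rather than an implicit one.
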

\begin{proof}
Write ${\cal D}$ for the collection of sets of edges $S$ such that $S=\partial A$ for some finite set of vertices $A$. We will use the following fact, which is verified by elementary arguments, and which was also noticed in \cite{Fink10}: if $S_1, S_2 \in {\cal D}$, then $S_1 \Delta S_2 \in {\cal D}$.

We will prove the proposition in the case $\sigma \in \mathcal{G}_{+_e}$. The other case is similar. 
Choose a sequence of finite vertex sets $(\Lambda_n)$ such that $e \in \partial \Lambda_n$ for all $n$, $\Lambda_n \to \Lambda$, and 
\[
-\sum_{\substack{\{x,y\} \in \partial \Lambda_n\\ \{x,y\} \neq e}} J_{xy} \sigma_x\sigma_y \to C_e(\sigma) \text{ as } n \to \infty\ .
\]

Write $S_n = \partial \Lambda_n$, $S = \partial \Lambda$, let $T \in {\cal D}$ and take $n$ so large that $T\cap S_n = T \cap S$ and $T \setminus S_n = T \setminus S$. Let $\widetilde J$ be the coupling configuration with value $\widetilde J_f=J_f$ for $f \neq e$ and $\widetilde J_e=C_e(\sigma)$ at $e$.
\begin{eqnarray*}
\sum_{\{x,y\} \in T} \widetilde J_{xy} \widetilde \sigma_x \widetilde \sigma_y &=& \sum_{\{x,y\} \in T \cap S_n} \widetilde J_{xy} \widetilde \sigma_x \widetilde \sigma_y + \sum_{\{x,y\} \in T \setminus S_n} \widetilde J_{xy} \widetilde \sigma_x \widetilde \sigma_y \\
&\overset{\mbox{\eqref{eq:mapping}}}{=}& -\sum_{\{x,y\} \in T \cap S_n} \widetilde J_{xy} \sigma_x\sigma_y + \sum_{\{x,y\} \in T \setminus S_n} \widetilde J_{xy} \sigma_x\sigma_y \\
&=& \sum_{\{x,y\} \in T \Delta S_n} \widetilde J_{xy} \sigma_x\sigma_y - \sum_{\{x,y\} \in S_n} \widetilde J_{xy} \sigma_x\sigma_y\ .
\end{eqnarray*}
Since $T \Delta S_n \in \mathcal{D}$ and $\sigma \in \mathcal{G}(\widetilde J)$, we have $\sum_{\{x,y\} \in T \Delta S_n} \widetilde J_{xy} \sigma_x\sigma_y \geq 0$. Therefore,
\[
\sum_{\{x,y\} \in T} \widetilde J_{xy} \widetilde \sigma_x \widetilde \sigma_y \geq -\sum_{\{x,y\} \in S_n} \widetilde J_{xy} \sigma_x\sigma_y\ .
\]
The right side tends to 0 as $n \to \infty$ by the definition of $S$ and $\widetilde{J}$, so $\sum_{\{x,y\} \in T} \widetilde J_{xy} \widetilde \sigma_x \widetilde \sigma_y \geq 0$ and  $\widetilde \sigma \in \mathcal{G}(\tilde J)$. Clearly, $\widetilde \sigma \in \mathcal{G}_{-_e}$, and by \eqref{eqn: equiv}, $C_e(\widetilde \sigma) \geq \widetilde J_e=C_e(\sigma)$.
\end{proof}
We prove three corollaries of the proposition. The first is the claimed bounds on $C_e(\sigma)$.
\begin{cor}[Super-satisfied bounds]
\label{cor: bound}
Let $e$ be an edge. If $\sigma\in \mathcal{G}(J_e)$, then
$$
|C_{e}(\sigma)|\leq \mathcal{S}_e\ .
$$
\end{cor}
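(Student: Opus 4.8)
The plan is to read off the bound from the one-sided estimates already recorded in \eqref{eq: critical first bound} together with the pairing construction of Proposition~\ref{prop: pair}. Recall that \eqref{eq: critical first bound} gives $C_e(\sigma)\ge -\mathcal{S}_e$ when $\sigma_e=+1$ and $C_e(\sigma)\le \mathcal{S}_e$ when $\sigma_e=-1$; what is missing in each case is the complementary inequality, and this is exactly what flipping a critical droplet will supply.

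First I would treat the case $\sigma\in\mathcal{G}(J_e)$ with $\sigma_e=+1$. Note that then $\sigma\in\mathcal{G}_{+_e}$, since the inequalities defining $\mathcal{G}_{+_e}$ range only over finite $A$ with $e\notin\partial A$ and hence form a subcollection of those defining $\mathcal{G}(J_e)$; moreover $C_e(\sigma)$ is finite, being squeezed between $-\mathcal{S}_e$ (by \eqref{eq: critical first bound}) and $J_e$ (by \eqref{eqn: equiv}). Pick any $\Lambda\in CD_e(\sigma)$, which is nonempty by compactness, and let $\widetilde\sigma$ be the flip of $\sigma$ on $\Lambda$ as in \eqref{eq:mapping}. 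Proposition~\ref{prop: pair} yields $\widetilde\sigma\in\mathcal{G}_{-_e}$ and $C_e(\widetilde\sigma)\ge C_e(\sigma)$. Since $\widetilde\sigma_e=-1$, the bound $C_e(\widetilde\sigma)\le\mathcal{S}_e$ from \eqref{eq: critical first bound} applies, and chaining gives $C_e(\sigma)\le C_e(\widetilde\sigma)\le \mathcal{S}_e$. Combined with $C_e(\sigma)\ge -\mathcal{S}_e$ this is $|C_e(\sigma)|\le\mathcal{S}_e$.

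The case $\sigma_e=-1$ is symmetric: here $\sigma\in\mathcal{G}_{-_e}$, Proposition~\ref{prop: pair} produces $\widetilde\sigma\in\mathcal{G}_{+_e}$ with $C_e(\widetilde\sigma)\le C_e(\sigma)$, and $\widetilde\sigma_e=+1$ gives $C_e(\widetilde\sigma)\ge -\mathcal{S}_e$ by \eqref{eq: critical first bound}, whence $C_e(\sigma)\ge -\mathcal{S}_e$; together with $C_e(\sigma)\le \mathcal{S}_e$ we again obtain $|C_e(\sigma)|\le\mathcal{S}_e$. There is no genuine obstacle at this stage — all the real work sits inside Proposition~\ref{prop: pair}. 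The only points that need a line of care are verifying that $\sigma\in\mathcal{G}(J_e)\cap\{\sigma_e=\pm1\}$ indeed lies in $\mathcal{G}_{\pm_e}$ so that the proposition is applicable, and that a critical droplet exists to feed into it, both of which are immediate.
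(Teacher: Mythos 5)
Your proof is correct and follows the paper's argument exactly: one-sided bound from \eqref{eq: critical first bound}, then apply Proposition~\ref{prop: pair} to a critical droplet to produce $\widetilde\sigma\in\mathcal{G}_{\mp_e}$ whose critical value supplies the complementary inequality via \eqref{eq: critical first bound} again. The extra remarks about $\mathcal{G}(J_e)\cap\{\sigma_e=\pm1\}\subseteq\mathcal{G}_{\pm_e}$ and nonemptiness of $CD_e(\sigma)$ are accurate but were already established in the surrounding text, so they are harmless rather than new.
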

\begin{proof}
We prove the bound when $\sigma\in \mathcal{G}_{+_e}$. The case $\sigma\in \mathcal{G}_{-_e}$ is similar.
The lower bound was noticed in \eqref{eq: critical first bound}.
As for the upper bound, by Lemma~\ref{prop: pair}, there exists $\widetilde \sigma\in \mathcal{G}_{-_e}$ such
that $ C_e(\sigma)\leq C_e(\widetilde\sigma)$. 
The claim then follows from $C_e(\widetilde\sigma)\leq \mathcal{S}_e$ again by \eqref{eq: critical first bound}.
\end{proof}

A useful fact about Corollary~\ref{cor: bound} is that it replaces the critical value that a priori depends on an infinite number of
couplings by a quantity that depends on finitely many. 
Another corollary is that for $J_e$ low enough or large enough, the set $\mathcal{G}(J)$ is independent of $J_e$:
\begin{cor}\label{cor: supersat3}
If $J_e> \mathcal{S}_e$, then $\mathcal{G}(J_e)=\mathcal{G}_{+_e}$. If $J_e< -\mathcal{S}_e$, then $\mathcal{G}(J_e)=\mathcal{G}_{-_e}$.
\end{cor}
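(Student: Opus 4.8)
The plan is to establish the two inclusions $\mathcal{G}(J_e)\subseteq\mathcal{G}_{+_e}$ and $\mathcal{G}_{+_e}\subseteq\mathcal{G}(J_e)$ under the hypothesis $J_e>\mathcal{S}_e$; the statement for $J_e<-\mathcal{S}_e$ then follows by the symmetric argument, interchanging the roles of $\mathcal{G}_{+_e}$ and $\mathcal{G}_{-_e}$ and of the two signs throughout. The inclusion $\mathcal{G}(J_e)\subseteq\mathcal{G}_{+_e}$ has essentially already been recorded: by \eqref{eq: supersat sign} every $\sigma\in\mathcal{G}(J_e)$ satisfies $\sigma_e=+1$, and the inequalities defining $\mathcal{G}_{+_e}$ form a subcollection of those in \eqref{eq:GSproperty}, so $\sigma\in\mathcal{G}_{+_e}$.

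For the reverse inclusion, fix $\sigma\in\mathcal{G}_{+_e}$, so $\sigma_e=+1$, and verify \eqref{eq:GSproperty} for $\sigma$ at coupling $J_e$. For finite $A$ with $e\notin\partial A$ the required inequality holds by the definition of $\mathcal{G}_{+_e}$. For finite $A$ with $e\in\partial A$ it reads $J_e+\sum_{\{z,w\}\in\partial A,\,\{z,w\}\neq e}J_{zw}\sigma_z\sigma_w\geq 0$, and by Lemma~\ref{lem: critical formula} the quantity $-\inf_{A:\,e\in\partial A}\sum_{\{z,w\}\in\partial A,\,\{z,w\}\neq e}J_{zw}\sigma_z\sigma_w$ equals $\sigma_eC_e(\sigma)=C_e(\sigma)$. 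Hence, once we know $C_e(\sigma)\leq\mathcal{S}_e$, for every admissible $A$ we get
\[
\sum_{\{z,w\}\in\partial A}J_{zw}\sigma_z\sigma_w \;=\; J_e+\sum_{\substack{\{z,w\}\in\partial A\\ \{z,w\}\neq e}}J_{zw}\sigma_z\sigma_w \;\geq\; J_e-C_e(\sigma) \;\geq\; J_e-\mathcal{S}_e \;>\;0 ,
\]
and therefore $\sigma\in\mathcal{G}(J_e)$. But $C_e(\sigma)\leq\mathcal{S}_e$ is exactly the bound in Corollary~\ref{cor: bound}, so the real content of the statement is already in place.

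Accordingly, the only point requiring care — and what I would regard as the sole obstacle — is that Corollary~\ref{cor: bound} is phrased for $\sigma\in\mathcal{G}(J_e)$, which is precisely what we are trying to prove, so invoking it directly looks circular. The resolution is that its proof never uses that hypothesis: it uses $\sigma\in\mathcal{G}_{+_e}$ only to apply the pairing of Proposition~\ref{prop: pair}, together with the elementary bound \eqref{eq: critical first bound} coming from the singleton sets $A=\{x\}$ and $A=\{y\}$. So I would either observe that ``$C_e(\sigma)\leq\mathcal{S}_e$ for every $\sigma\in\mathcal{G}_{+_e}$'' follows verbatim from that proof, or, for a fully self-contained argument, bypass $C_e$ altogether: given finite $A$ with $x\in A$ and $y\notin A$, both $A\setminus\{x\}$ and $A\cup\{y\}$ are finite vertex sets whose edge boundary avoids $e$, so $\sigma$ satisfies \eqref{eq:GSproperty} on them; writing $\partial(A\setminus\{x\})=\partial A\,\Delta\,\partial\{x\}$ and $\partial(A\cup\{y\})=\partial A\,\Delta\,\partial\{y\}$ (the closure of $\mathcal{D}$ under symmetric difference used in the proof of Proposition~\ref{prop: pair}) and cancelling the edges common to the two boundaries shows that $-\sum_{\{z,w\}\in\partial A,\,\{z,w\}\neq e}J_{zw}\sigma_z\sigma_w$ is at most $\sum_{z\neq y,\ \{x,z\}\in E}|J_{xz}|$ and at most $\sum_{z\neq x,\ \{y,z\}\in E}|J_{yz}|$, hence at most $\mathcal{S}_e$, which is all that is needed.
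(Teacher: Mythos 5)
Your main argument coincides with the paper's: show $\mathcal{G}(J_e)\subseteq\mathcal{G}_{+_e}$ from the super-satisfied sign condition, and then get the reverse inclusion by combining Lemma~\ref{lem: critical formula} with the bound $C_e(\sigma)\leq\mathcal{S}_e$ of Corollary~\ref{cor: bound}. You are also right that the paper is implicitly using Corollary~\ref{cor: bound} under the weaker hypothesis $\sigma\in\mathcal{G}_{+_e}$, and that this is legitimate because its proof only invokes Proposition~\ref{prop: pair} and \eqref{eq: critical first bound}, both of which require no more than $\sigma\in\mathcal{G}_{+_e}$ (note also that $\sigma\in\mathcal{G}_{+_e}$ does belong to $\mathcal{G}(\widetilde J)$ for $\widetilde J_e=C_e(\sigma)$, as the proof of Proposition~\ref{prop: pair} requires, precisely by the definition of $C_e$ via \eqref{eqn: c_e}). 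Your alternative, self-contained argument is a genuine simplification: given finite $A$ with $e=\{x,y\}\in\partial A$, applying the $\mathcal{G}_{+_e}$ inequality to $A\setminus\{x\}$ and to $A\cup\{y\}$, and using $\partial(B_1\Delta B_2)=\partial B_1\,\Delta\,\partial B_2$, bounds $-\sum_{\{z,w\}\in\partial A,\ \{z,w\}\neq e}J_{zw}\sigma_z\sigma_w$ by $\mathcal{S}_e^x$ and by $\mathcal{S}_e^y$ respectively, hence by $\mathcal{S}_e$. This completely bypasses critical values and critical droplets (and hence Proposition~\ref{prop: pair} and the compactness argument for $CD_e$), at the cost of redoing a small piece of bookkeeping that the paper packages into Corollary~\ref{cor: bound}; the paper's route has the advantage of also producing the sharper statement $|C_e(\sigma)|\leq\mathcal{S}_e$, which is used again later (e.g., for tightness in the weak-limit construction).
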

\begin{proof}
Suppose first that $J_e > \mathcal{S}_e$. Then, from \eqref{eqn: equiv} and Corollary~\ref{cor: bound}, one has $\mathcal{G}(J_e)\subseteq \mathcal{G}_{+_e}$. 
Conversely, if $\sigma\in \mathcal{G}_{+_e}$, it suffices
to show that for any finite set of vertices $A$ with $e\in \partial A$
$$
J_e+ \sum_{\substack{\{z,w\}\in \partial A\\\{z,w\}\neq e}}J_{zw}\sigma_z\sigma_w\geq 0\ .
$$
By Corollary \ref{cor: bound}, we have $\mathcal{S}_e- C_e(\sigma)\geq 0$ and, using formula \eqref{eqn: c_e}, we
see that the above holds for $J_e> \mathcal{S}_e$. The proof for $\mathcal{G}_{-_e}$ is similar.
\end{proof}

Finally, we show that an infimizing sequence of sets for the critical values of an edge can never contain certain super-satisfied edges. For this we need to introduce for $e=\{x,y\}$
\begin{equation}\label{eq: vertexSSvalue}
\mathcal{S}_e^x = \sum_{\substack{\{x,z\}\in E,z \neq y}} |J_{xz}|\ .
\end{equation}
Note that by definition, $ \mathcal{S}_e=\min\{\mathcal{S}_e^x,\mathcal{S}_e^y\}$.
If $d$ and $e$ are two different edges, there exists a vertex $x$ which is an endpoint of $d$, but not of $e$. 
Having $|J_d|>\mathcal{S}_d^x$ guarantees that the edge $d$ is super-satisfied independently of the value of $J_e$.

\begin{cor}
\label{cor: supersat2}
Let $d =\{x,y\}$ and $e$ be edges such that $x$ is not an endpoint of $e$ and $|J_d|>\mathcal{S}_d^x$. If $\sigma \in \mathcal{G}(J)$ then no element $\Lambda$ of $CD_e(\sigma)$ has $d \in \partial \Lambda$.
\end{cor}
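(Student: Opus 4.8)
The plan is to argue by contradiction: suppose $\sigma \in \mathcal{G}(J)$ and some critical droplet $\Lambda \in CD_e(\sigma)$ has $d = \{x,y\} \in \partial \Lambda$, where $|J_d| > \mathcal{S}_d^x$ and $x$ is not an endpoint of $e$. Since $\Lambda$ is a critical droplet, there is a sequence of finite vertex sets $(\Lambda_n)$ with $\Lambda_n \to \Lambda$, $e \in \partial \Lambda_n$ for all $n$, and $-\sum_{\{z,w\} \in \partial \Lambda_n, \{z,w\} \neq e} J_{zw}\sigma_z\sigma_w \to \sigma_e C_e(\sigma)$. Because $d \in \partial \Lambda$ and $\Lambda_n \to \Lambda$, for all large $n$ we have $d \in \partial \Lambda_n$, so exactly one endpoint of $d$ lies in $\Lambda_n$.

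The key step is to modify each $\Lambda_n$ by flipping the membership of the single vertex $x$ — that is, set $\Lambda_n' = \Lambda_n \Delta \{x\}$ (equivalently $\Lambda_n \cup \{x\}$ or $\Lambda_n \setminus \{x\}$, whichever applies). I would first check that $e \in \partial \Lambda_n'$ still holds: since $x$ is not an endpoint of $e$, toggling $x$ does not affect whether $e \in \partial \Lambda_n'$, so this is preserved. Next, I compute the change in the boundary sum. The edges whose contribution changes are exactly the edges incident to $x$, i.e. the edges $\{x,z\}$ with $\{x,z\} \in E$. For $z = y$ (the edge $d$ itself), $d$ moves from $\partial \Lambda_n$ to not being in $\partial \Lambda_n'$, removing the term $J_d \sigma_x \sigma_y = J_d \sigma_d$ from the sum $\sum_{\{z,w\} \in \partial \Lambda_n, \{z,w\} \neq e}$. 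For $z \neq y$ with $\{x,z\} \in E$, such an edge either enters or leaves the boundary, changing its contribution by at most $2|J_{xz}|$ in absolute value. Hence
\[
\left| \sum_{\substack{\{z,w\} \in \partial \Lambda_n'\\ \{z,w\} \neq e}} J_{zw}\sigma_z\sigma_w - \sum_{\substack{\{z,w\} \in \partial \Lambda_n\\ \{z,w\} \neq e}} J_{zw}\sigma_z\sigma_w + J_d \sigma_d \right| \leq 2\sum_{\substack{\{x,z\} \in E, z \neq y}} |J_{xz}| = 2\mathcal{S}_d^x\ .
\]
The idea is that removing the edge $d$ from the boundary changes the energy by a definite amount $|J_d|$, while the remaining edges at $x$ can only compensate by at most $2\mathcal{S}_d^x$; but wait — this naive accounting does not immediately close, so the precise bookkeeping needs care. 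The cleaner route is: taking $A = \Lambda_n$ and $A = \Lambda_n'$ in the flexibility formula \eqref{eqn: flex}, and using that $\sigma \in \mathcal{G}(J)$ (so the full boundary sum including $e$ is $\geq 0$), derive that having $d$ on the boundary of the infimizing $\Lambda_n$ is strictly suboptimal: replacing $\Lambda_n$ by $\Lambda_n'$ strictly decreases the relevant boundary sum by at least $|J_d| - \mathcal{S}_d^x > 0$, which contradicts that $(\Lambda_n)$ is an infimizing sequence converging to the critical value.

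The main obstacle I anticipate is making the comparison of boundary sums fully rigorous while tracking the edge $e$: the critical value formula excludes $e$ from the sum, so one must be careful that toggling $x$ never touches $e$ (which is exactly why the hypothesis ``$x$ is not an endpoint of $e$'' is imposed) and that the ground-state inequality $\sum_{\{z,w\} \in \partial \Lambda_n'} J_{zw}\sigma_z\sigma_w \geq 0$ can be applied with $e$ either in or out of $\partial \Lambda_n'$. A secondary subtlety is that the improvement $|J_d| - \mathcal{S}_d^x$ must be shown to be a \emph{uniform} strict decrease, independent of $n$; this follows because $J_d$ and $\mathcal{S}_d^x$ depend only on finitely many fixed couplings. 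Once the uniform strict improvement is established, it contradicts $-\sum_{\{z,w\} \in \partial \Lambda_n, \{z,w\} \neq e} J_{zw}\sigma_z\sigma_w \to \sigma_e C_e(\sigma) = -\inf$, since the $\Lambda_n'$ would give a strictly smaller value of the infimand for all large $n$, completing the proof.
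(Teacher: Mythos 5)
Your approach is genuinely different from the paper's. The paper applies Proposition~\ref{prop: pair} to pass from $\sigma$ (with $\Lambda \in CD_e(\sigma)$) to the flipped configuration $\widetilde\sigma$ with $\sigma_d = -\widetilde\sigma_d$, notes that both are elements of $\mathcal{G}(J(e,y))$ for suitable $y$, and then invokes Corollary~\ref{cor: supersat3} to conclude that all ground states of $J(e,y)$ must have a common sign at $d$ for every $y$, which is a contradiction. You instead work directly with the infimizing sequence $(\Lambda_n)$ and try to show that toggling the single vertex $x$ strictly decreases the infimand, contradicting that $(\Lambda_n)$ is infimizing. This is a valid and in fact more elementary strategy, but the execution has two genuine gaps.

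First, your displayed bound has a wrong constant, and with that constant the argument does not close. When you toggle $x$, an edge $\{x,z\}$ (with $z\ne y$) either enters or leaves the boundary, so its contribution to the sum moves between $J_{xz}\sigma_x\sigma_z$ and $0$; the change has magnitude $|J_{xz}|$, not $2|J_{xz}|$. (A factor of $2$ would arise if you flipped spins inside $\Lambda$, as in Proposition~\ref{prop: pair}, but here you are modifying the set, not the configuration.) With the erroneous bound $2\mathcal{S}_d^x$, the hypothesis $|J_d|>\mathcal{S}_d^x$ is insufficient; you would need $|J_d|>2\mathcal{S}_d^x$. With the correct bound $\mathcal{S}_d^x$, the argument does close, and in fact your later assertion of a decrease of at least $|J_d|-\mathcal{S}_d^x>0$ is the right statement—but you never actually derive it, and the proposed ``cleaner route'' via \eqref{eqn: flex} together with ``the full boundary sum including $e$ is $\geq 0$'' does not supply the missing step.

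Second, and more fundamentally, the inequality you need is $-J_d\sigma_d+\mathcal{S}_d^x<0$, not $-|J_d|+\mathcal{S}_d^x<0$: the quantity removed from the sum is $J_d\sigma_d$, and you must know its sign. You nowhere establish that $J_d\sigma_d=|J_d|$. This is true, but it requires a separate observation: since $|J_d|>\mathcal{S}_d^x\ge\mathcal{S}_d$, the edge $d$ is super-satisfied, and by \eqref{eq: supersat sign} (equivalently, applying the ground state inequality \eqref{eq:GSproperty} to the singleton $A=\{x\}$ or $A=\{y\}$) every $\sigma\in\mathcal{G}(J)$ satisfies $\sigma_d=\mathrm{sgn}\,J_d$, hence $J_d\sigma_d=|J_d|$. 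Once you add this and fix the constant, the vertex-toggle argument is correct: each $\Lambda_n'=\Lambda_n\,\Delta\,\{x\}$ is still a valid competitor (since $x$ is not an endpoint of $e$), and its infimand is smaller than that of $\Lambda_n$ by at least $|J_d|-\mathcal{S}_d^x>0$, contradicting the definition of $C_e(\sigma)$ as an infimum.
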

\begin{proof}
Let $\sigma\in\G(J)$ for some fixed $J$ such that $|J_d|>\mathcal{S}_d^x$.
Suppose $d \in \partial \Lambda$ for some $\Lambda \in CD_e(\sigma)$. Define $\widetilde \sigma$ as in Proposition~\ref{prop: pair}, so that $\sigma \Delta \widetilde \sigma = \partial \Lambda$. 
For $y \in \mathbb{R}$, let $J(e,y)$ be the coupling configuration that equals $J_f$ at $f \neq e$ and $y$ at $e$.
On one hand, note that, by Proposition~\ref{prop: pair}, $\sigma_d=-\widetilde \sigma_d$ and that $\widetilde \sigma\in \mathcal{G}(J(e,y))$ for either small or large values of $y$.
On the other hand, if $|J_d|>\mathcal{S}_d^x$ for $J$, then $|J_d| > \mathcal{S}_d^x$ in $J(e,y)$ for all $y\in\R$, because $x$ is not shared by $d$ and $e$.
In particular, this implies by Corollary~\ref{cor: supersat3} that the sign at the edge $d$ of the elements of $\mathcal{G}(J(e,y))$ 
must be the same for all $y\in\R$. This contradicts $\sigma_d=-\widetilde \sigma_d$.
\end{proof}

\section{The uniform measure on the set of ground states}
\label{sec: properties}

In this section we assume that 
\[
 \text{$|\mathcal{G}(J)|$ is constant $\nu$-a.s. and  $|\mathcal{G}(J)| < \infty$}\ .
\]
The first assertion holds for graphs with translation symmetry by the ergodic theorem as noted in Corollary \ref{cor: constant}.
We consider the family $(\mu_J)$ consisting of the uniform measures on $\mathcal{G}(J)$ indexed by $J\in\Omega_1$.
Recall from Proposition~\ref{prop: mu mble} that this family has a measurable dependence on $J$.
For concision, the following notation will be used throughout the paper for the product measures on $J$ and on one or two replicas of the spin configurations:
\begin{equation}
\label{eqn: M}
M=\nu(dJ) ~ \mu_J \qquad \text{ or } \qquad M=\nu(dJ) ~ \mu_J\times\mu_J\ ,
\end{equation}
where the appropriate case will be clear from the context.
In the first part, we use the monotonicity of the measure (defined below) to 
prove several facts, for example that the critical droplet
of any edge is unique. Second, we focus on the properties of the interface sampled from $M$ and
prove that, if it exists, any given edge lies in it with positive probability.

\subsection{Properties of the measure}
\label{sect: monotonicity}
We first introduce the {\it monotonicity property} of the family $(\mu_J)$. It is the analogue of the monotonicity of $\mathcal{G}(J)$ in Lemma~\ref{lem: monotone} 
at the level of measures. To define it, we give the following notation.
For any coupling configuration $J=(J_f)_{f\in E}$, fixed edge $e$ and real number $y$, let $J(e,y)$ be the coupling configuration given by
\begin{equation}
\label{eqn: J(e,s)}
(J(e,y))_f = \begin{cases}
y & \text{ if } f=e \\
J_f & \text{ if } f \neq e
\end{cases}\ .
\end{equation}
Consider any event $A\subseteq\Omega_1\times\{\sigma:\sigma_e=+1\}$.
A simple consequence of Lemma~\ref{lem: monotone}, since $|\G(J)|$ is a.s. constant, is that for almost all $J$ and for almost all $y\geq J_e$:
\begin{equation}
\label{eqn: increasing}
\begin{aligned}
\mu_J\{\sigma: (J,\sigma)\in A\}&\leq \mu_{J(e,y)}\{\sigma: (J,\sigma)\in A\}\ ;
\end{aligned}
\end{equation}
on the other hand, if $A\subseteq\Omega_1\times\{\sigma:\sigma_e=-1\}$, then for almost all $J$ and almost all $y\leq J_e$:
\begin{equation}
\label{eqn: decreasing}
\begin{aligned}
\mu_J\{\sigma: (J,\sigma)\in A\}&\geq \mu_{J(e,y)}\{\sigma: (J,\sigma)\in A\}.
\end{aligned}
\end{equation}
Similar statements hold for the product $\mu_J\times \mu_J$. 
For example, the mixed case $A\subseteq \Omega_1\times\{\sigma:\sigma_e=+1\}\times\{\sigma':\sigma'_e=-1\}$
yields for almost all $J$ and almost all $y\geq J_e$ and $y'\leq J_e$:
\begin{equation}
\label{eqn: mixed}
\begin{aligned}
\mu_J\times \mu_J \{(\sigma,\sigma'): (J,\sigma,\sigma')\in A\}&\leq \mu_{J(e,y)}\times \mu_{J(e,y')}\{(\sigma,\sigma'): (J,\sigma,\sigma')\in A\}.
\end{aligned}
\end{equation}

We refer to \eqref{eqn: increasing}, \eqref{eqn: decreasing} and \eqref{eqn: mixed} as the {\it monotonicity} of the
family $(\mu_J)$. It is a natural property to expect from a family of measures on ground states.
The results of this section, with the exception of Lemma~\ref{lem: backmodify}, are derived solely from it and no
other finer properties of the uniform measure.
The main use of the monotonicity property is to decouple the dependence on $J_e$ in $\mu_J$ from the dependence 
on $J_e$ in the considered event. This trick will appear frequently. The results of this section are stated for 
the measure $M$ in \eqref{eqn: M} with one replica of $\sigma$ for concision. They also hold for the measure $M$ on two replicas.

A useful consequence of \eqref{eqn: increasing}, \eqref{eqn: decreasing}, \eqref{eqn: mixed}, and the continuity of $\nu$ is that $\nu$-almost surely no coupling value is equal to its critical value. 
This is a special case of the next proposition, taking $B=\{e\}$ and $h_{B^c}=C_e$.
\begin{prop}
\label{prop: decoupling}
Let $B\subset E$ be a finite set of edges
and $h_{B^c}:\R^E\times \{-1,+1\}^V\to\R$ be a function that does not depend on couplings of edges in $B$.
Then for any given linear combination $\sum_{b\in B}J_bs_b$, provided that the coefficients $s_b\in\R$ are not all zero,
$$
M\{(J,\sigma): h_{B^c}(J,\sigma)=\sum_{b\in B}J_bs_b\}=0\ .
$$
The same statement holds if $h_{B^c}$ is a function of the couplings and two replicas $(J,\sigma,\sigma')\mapsto h_{B^c}(J,\sigma,\sigma')$ that does not depend on the couplings of edges in $B$.
\end{prop}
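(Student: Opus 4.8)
The plan is to reduce to a one-parameter statement about a single distinguished edge and then invoke the continuity of $\nu$ together with the monotonicity of $(\mu_J)$. Fix $b_0 \in B$ with $s_{b_0} \neq 0$ (such a $b_0$ exists by hypothesis). Write $J = (J_{b_0}, \hat J)$ where $\hat J$ collects all couplings except $J_{b_0}$, and set $y = J_{b_0}$. The event in question is
\[
A = \Big\{ (J,\sigma) : h_{B^c}(J,\sigma) - \sum_{b \in B \setminus \{b_0\}} J_b s_b = s_{b_0} J_{b_0} \Big\}.
\]
The key point is that the left-hand side, call it $g(\hat J, \sigma)$, does \emph{not} depend on $J_{b_0}$: indeed $h_{B^c}$ is independent of all couplings in $B$ by assumption, and the subtracted sum involves only $J_b$ for $b \neq b_0$. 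So $A$ is the event that the quantity $g(\hat J,\sigma)$, which has no $J_{b_0}$-dependence, equals the linear function $s_{b_0} y$ of $y = J_{b_0}$.

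Next I would decouple the two appearances of $J_{b_0}$ — the one inside $\mu_J$ and the one inside the event — using monotonicity. Condition on $\hat J$. Split into the sub-events $\{\sigma_{b_0} = +1\}$ and $\{\sigma_{b_0} = -1\}$ and treat, say, the first; the second is symmetric. On $\{\sigma_{b_0}=+1\}$, by \eqref{eqn: increasing} the function $y \mapsto \mu_{J(b_0,y)}\{\sigma : (J(b_0,y),\sigma) \in A,\ \sigma_{b_0}=+1\}$ — where I emphasize the $A$-condition is $g(\hat J,\sigma) = s_{b_0} y$ — can be compared across values of $y$. More precisely, fix a reference value $y_0$; for $y \ge y_0$ one has the monotonicity inequality relating $\mu_{J(b_0,y_0)}$ and $\mu_{J(b_0,y)}$ on the fixed $\{\sigma_{b_0}=+1\}$-event $\{g(\hat J,\sigma)=s_{b_0}y_0\}$, and similarly for $y \le y_0$. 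The upshot is that for each fixed $\hat J$ and each target value $t \in \R$, the set of $y$ with $\mu_{J(b_0,y)}\{\sigma : g(\hat J,\sigma)=t,\ \sigma_{b_0}=+1\} > 0$ — equivalently, the set of $y$ such that some $\sigma \in \mathcal{G}(J(b_0,y))$ with $\sigma_{b_0}=+1$ satisfies $g(\hat J,\sigma)=t$ — has controlled structure: it is essentially an interval (or at worst a set whose $y$-section of the "good" configurations is monotone), because a configuration $\sigma$ with $\sigma_{b_0}=+1$ that is a ground state at $J(b_0,y)$ remains one for all larger $y$ by Lemma~\ref{lem: monotone}, and $g(\hat J,\sigma)$ is a fixed number not depending on $y$. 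Hence for fixed $\hat J$, the set of $y$ for which $A$ is hit by a $(+1)$-configuration is a countable union (over the finitely-or-countably many relevant $\sigma$) of single points $\{y : s_{b_0} y = g(\hat J,\sigma)\}$, i.e. a countable set, hence Lebesgue-null.

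Finally I would assemble the pieces. For fixed $\hat J$, the conditional law of $J_{b_0}$ under $\nu$ is continuous (the marginals of $\nu$ are continuous with full support), so it assigns zero mass to any countable set of $y$; therefore the conditional $M$-probability of $A$ given $\hat J$ is $0$. Integrating over $\hat J$ via Fubini gives $M(A) = 0$. The same argument applies verbatim with two replicas: one replaces $\mu_J$ by $\mu_J \times \mu_J$, uses the monotonicity statements \eqref{eqn: increasing}–\eqref{eqn: mixed} in the appropriate mixed form after splitting on the signs $(\sigma_{b_0},\sigma'_{b_0}) \in \{+,-\}^2$, and notes $g$ is still independent of $J_{b_0}$. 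The main obstacle I anticipate is the bookkeeping in the middle step: making rigorous the claim that, for fixed $\hat J$, the set of $y$ at which a $(+1)$-ground-state configuration with a prescribed value of $g$ exists is countable. This needs the observation that although there may be infinitely many candidate $\sigma$, each contributes at most one value of $y$ (the solution of the linear equation $s_{b_0} y = g(\hat J,\sigma)$), and that the relevant set of $\sigma$ is countable because configurations are determined by their restrictions to finite sets — essentially the measurability scaffolding already built in Propositions~\ref{prop:mble} and \ref{prop: mu mble}. Once that is in place the continuity of $\nu$ closes the argument immediately.
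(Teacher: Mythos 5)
Your route is genuinely different from the paper's. The paper decouples the $J_B$-dependence of $\mu_J$ from that of the event by averaging $\mu_J$ over \emph{all} couplings of $B$ simultaneously (via the monotonicity inequalities \eqref{eqn: increasing}--\eqref{eqn: mixed}), then applies Fubini so that $J_B$ is integrated first; at that stage the linear condition $\sum_{b\in B}J_b s_b = h_{B^c}$ confines $J_B$ to a proper affine subspace of $\R^B$, which has zero $\nu$-measure. You instead isolate a single coordinate $b_0$, fix $\hat J$, and argue that the set of values $y=J_{b_0}$ at which the event can occur is countable, then invoke continuity of $\nu$.

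The reduction is a good idea, but there is a gap at exactly the step you flag as the main obstacle. You need that the collection of configurations which could ever be a ground state for some $J(b_0,y)$ (with $\hat J$ fixed) is countable, so that each contributes at most one bad $y = s_{b_0}^{-1}g(\hat J,\sigma)$. Your proposed justification --- that ``configurations are determined by their restrictions to finite sets --- essentially the measurability scaffolding already built in Propositions~\ref{prop:mble} and \ref{prop: mu mble}'' --- does not deliver this: $\{-1,+1\}^V$ is uncountable, and those propositions say nothing about the cardinality of $\bigcup_y \mathcal{G}(J(b_0,y))$. The correct justification is of a different nature and uses the super-satisfaction machinery of Section~2.2. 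Any $\sigma \in \mathcal{G}(J(b_0,y))$ lies in $\mathcal{G}_{+_{b_0}}$ or $\mathcal{G}_{-_{b_0}}$ according to $\sigma_{b_0}$ (these sets, defined before Proposition~\ref{prop: pair}, do not depend on $J_{b_0}$). By Corollary~\ref{cor: supersat3}, $\mathcal{G}_{\pm_{b_0}}=\mathcal{G}(J(b_0,y))$ whenever $\pm y > \mathcal{S}_{b_0}$, and by the Section~3 standing assumption $|\mathcal{G}(J)|<\infty$, both $\mathcal{G}_{+_{b_0}}$ and $\mathcal{G}_{-_{b_0}}$ are finite for $\nu$-a.e.\ $\hat J$. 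Hence $\bigcup_y \mathcal{G}(J(b_0,y)) \subseteq \mathcal{G}_{+_{b_0}}\cup\mathcal{G}_{-_{b_0}}$ is finite, each element solves $s_{b_0}y=g(\hat J,\sigma)$ for at most one $y$, and the rest of your argument goes through. (The intermediate ``interval structure'' observation about half-lines is not actually used to reach the countability conclusion and can be dropped.) Once repaired, your proof is valid but requires Corollary~\ref{cor: supersat3}, whereas the paper's Fubini argument avoids Section~2.2 entirely and is more economical.
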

\begin{proof}
The event $\{\sigma: h_{B^c}(J,\sigma)=\sum_{b\in B}J_bs_b\}$ can be decomposed by
taking the intersection with all possible spin configurations on $B$. Suppose first that
$\sigma_b=+1$ for all $b\in B$ and define, for a given $J\in\R^E$, $J(B, y)$ for $y\in\R^B$ similarly to \eqref{eqn: J(e,s)}
$$
(J(B, y))_e=\begin{cases}
y_e, \text{ $e\in B$}\\
J_e, \text{ $e\notin B$}\ .
\end{cases}
$$
By  \eqref{eqn: increasing}, $\mu_J\{\sigma: h_{B^c}(J,\sigma)=\sum_{b\in B}J_b s_b, ~\sigma_b=+1~ \forall b\in B\}$
is smaller than the probability of the same event under the measure averaged over larger $J_b$'s. Writing $\{J_B^\geq\}$ for the event that $y_b\geq J_b$ for all $b\in B$,
\begin{eqnarray*}
&&\int \nu(dJ_B) \mu_J\{\sigma: h_{B^c}(J,\sigma)=\sum_{b\in B}J_bs_b, ~\sigma_b=+1~ \forall b\in B\} \\
&\leq&\int \nu(dJ_B) \frac{1}{\nu\{J_B^\geq\}}\int_{\{J_B^\geq\}}\nu(dy) ~ \mu_{J(B,y)}\{\sigma: h_{B^c}(J,\sigma)=\sum_{b\in B}J_b s_b, ~\sigma_b=+1~ \forall b\in B\}\ .
\end{eqnarray*}

Integrating $y$ over all of $\R^B$ and dropping $\{\sigma_b=+1~ \forall b\in B\}$ gives the upper bound:
\[
\int \nu(dJ_B) \frac{1}{\nu\{J_B^\geq\}} \int \nu(dy) \mu_{J(B,y)}\{\sigma:h_{B^c}(J(B,y),\sigma) = \sum_{b \in B} J_b s_b\} \ .
\]
Note $h_{B^c}(J(B,y),\sigma)=h_{B^c}(J,\sigma)$ as $h_{B_c}$ does not depend on couplings in $B$.
Now use Fubini:
\[
\int \nu(dy) \int d\mu_{J(B,y)}(\sigma) \left[ \int \nu(dJ_B) \nu\{J_B^\geq\}^{-1} 1_{\{\sum_{b \in B} J_b s_b = h_{B^c}(J(B,y),\sigma)\}}(J_B) \right]\ ,
\]
where $1_A(J_B)$ denotes the indicator function of the event $A$.
Because the linear combination of $J_b$'s is non-trivial and $h_{B^c}(J(B,y),\sigma)$ does not depend on $J_B$, the indicator function is equal to 1 on a set of $J_B$'s that is a hyperplane of dimension at most $|B|-1$. Therefore it is $\nu$-almost surely zero, and the inner integral equals zero. This completes the proof in the case that $\sigma_b=+1$ for all $b \in B$. To prove the other cases where $\sigma_b=-1$ for some $b \in B$, it suffices to average over $\{J_b^\leq\}$ (where this event is defined in the obvious way) for $b$ and use \eqref{eqn: decreasing}.
The proof of the second claim when $h_{B^c}$ is a function of the couplings and two replicas $(J,\sigma,\sigma')\mapsto h_{B^c}(J,\sigma,\sigma')$ is done
the same way. In the case that $\sigma_b=+1$ and $\sigma'_b=-1$, one uses \eqref{eqn: mixed} and bounds $\mu_J\times\mu_J$ by 
the average of $\mu_{J(b,y)}\times\mu_{J(b,y')}$ over $\{J_b^\geq\}\times \{J_b^\leq\}$ .
\end{proof}

One consequence of the above proposition is that the critical droplet $CD_e(\sigma)$ set cannot contain two non-flip-related elements.
In other words, infimizing sequences of finite sets of edges entering in the definition \eqref{eqn: c_e} of the critical value converge
to a unique set. This implies in particular that the mapping of Lemma~\ref{prop: pair} is well-defined.
\begin{cor}
\label{cor: droplet}
For any edge $e\in E$,  $M\{ (J,\sigma): \exists ~T_1 \neq T_2 \in CD_e(\sigma) \text{ with } T_1 \neq G \setminus T_2\} = 0$.
\end{cor}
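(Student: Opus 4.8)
The plan is to fix the edge $e$ and deduce the claim from Proposition~\ref{prop: decoupling} after a countable reduction. Suppose $T_1,T_2\in CD_e(\sigma)$ satisfy $T_1\neq T_2$ and $T_1\neq G\setminus T_2$. Then $D:=T_1\Delta T_2$ is a nonempty proper subset of $V$, so by connectedness of $G$ its edge boundary $\partial D$ is nonempty; and since $\partial D=\partial T_1\Delta\partial T_2$ (an elementary identity) while $e\in\partial T_1\cap\partial T_2$ --- indeed, if $(\Lambda_n)$ is a sequence approximating $T_i$ as in the definition of $CD_e$, then $e\in\partial\Lambda_n$ for all $n$, and since the membership of the two endpoints of $e$ in $\Lambda_n$ eventually agrees with their membership in $T_i$, exactly one endpoint of $e$ lies in $T_i$ --- we conclude $e\notin\partial D$. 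Hence there is an edge $f\neq e$ with $f\in\partial T_1\Delta\partial T_2$, so the event in the statement is contained in $\bigcup_{f\in E\setminus\{e\}}\mathcal E_f$, where
\[
\mathcal E_f:=\bigl\{(J,\sigma)\ :\ \exists\,T_1,T_2\in CD_e(\sigma)\text{ with } f\in\partial T_1\Delta\partial T_2\bigr\}
\]
(any such $T_1,T_2$ are automatically distinct and non-complementary, as $\partial T=\partial(G\setminus T)$). Since $E$ is countable, it suffices to prove $M(\mathcal E_f)=0$ for each fixed $f\neq e$.

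To get the relevant relation, work on $\mathcal E_f$ and pick $T_1,T_2$ as above with approximating finite sets $\Lambda_n^{(1)}\to T_1$, $\Lambda_n^{(2)}\to T_2$ realizing the infimum in \eqref{eqn: c_e}. Morally both yield $\sum_{\{x,y\}\in\partial T_i\setminus\{e\}}J_{xy}\sigma_x\sigma_y=-\sigma_eC_e(\sigma)$, so these sums agree; cancelling the edges common to $\partial T_1$ and $\partial T_2$ (which include $e$ but not $f$) and isolating the $f$-term gives a relation $J_f\sigma_f=h(J,\sigma)$, where $h$ is a limit of sums of terms $J_{xy}\sigma_x\sigma_y$ over edges $\{x,y\}\notin\{e,f\}$. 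Rigorously, as $\partial T_i$ can be infinite, this must be run through the finite approximants $\Delta_n:=\Lambda_n^{(1)}\Delta\Lambda_n^{(2)}$: one has $e\notin\partial\Delta_n$, $f\in\partial\Delta_n$ for large $n$ (because $\Delta_n\to T_1\Delta T_2$ and $f\in\partial(T_1\Delta T_2)$), and $\sum_{\{x,y\}\in\partial\Delta_n}J_{xy}\sigma_x\sigma_y\geq 0$ ($M$-almost surely $\sigma\in\mathcal G(J)$ and $\Delta_n$ is finite); comparing $\Lambda_n^{(1)},\Lambda_n^{(2)}$ with the sets $\Lambda_n^{(1)}\cap\Lambda_n^{(2)}$ and $\Lambda_n^{(1)}\cup\Lambda_n^{(2)}$ --- which also have $e$ in their boundary for large $n$, hence are admissible in \eqref{eqn: c_e} --- together with the asymptotic optimality of the two sequences forces $\sum_{\{x,y\}\in\partial\Delta_n}J_{xy}\sigma_x\sigma_y\to 0$, and isolating the $f$-term yields $J_f\sigma_f=-\lim_n\sum_{\{x,y\}\in\partial\Delta_n,\ \{x,y\}\neq f}J_{xy}\sigma_x\sigma_y$. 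Decomposing $\mathcal E_f$ over the finitely many sign patterns of $\sigma$ on $\{e,f\}$ and applying Proposition~\ref{prop: decoupling} with $B=\{e,f\}$ and the nontrivial combination $(s_e,s_f)=(0,\sigma_f)$ gives $M(\mathcal E_f)=0$, which proves the corollary.

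The main obstacle lies in making the preceding paragraph rigorous. First, since $\nu$ carries no moment assumption, $\partial T_i$ may be infinite and $\sum_{\{x,y\}\in\partial T_i}J_{xy}\sigma_x\sigma_y$ need not converge even conditionally, so the argument must be carried out entirely with the finite sets $\Delta_n$; moreover the estimate $\sum_{\{x,y\}\in\partial\Delta_n}J_{xy}\sigma_x\sigma_y\to 0$ involves signed weights, for which the submodularity of the cut function is unavailable --- precisely the loss of monotonicity along interfaces flagged in the introduction --- so the $\cap/\cup$ comparison requires care. A second, more serious point is that the approximating sequences (and hence $h$) may themselves depend on $J_f$, although not on $J_e$, since the right side of \eqref{eqn: c_e} does not involve $J_e$; to put the relation in the product form demanded by Proposition~\ref{prop: decoupling}, one uses the super-satisfied bounds (Corollaries~\ref{cor: bound} and~\ref{cor: supersat2}; the latter confines $J_f$ to $[-\mathcal S_f^{\,x},\mathcal S_f^{\,x}]$ on $\mathcal E_f$ for an endpoint $x$ of $f$ not in $e$) to reorganize $h$ into a genuine function of $\sigma$ and of the couplings outside $\{e,f\}$.
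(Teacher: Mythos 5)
Your starting point matches the paper's: from $T_1\neq T_2$, $T_1\neq G\setminus T_2$ you correctly deduce $\partial T_1\Delta\partial T_2=\partial(T_1\Delta T_2)\neq\varnothing$, pick $f$ in this symmetric difference, and aim to contradict Proposition~\ref{prop: decoupling} (the paper picks $b\in\partial T_1\setminus\partial T_2$, the same idea). But the execution via the interface sums $\sum_{\partial\Delta_n}J$ has two genuine gaps that your ``obstacles'' paragraph flags but does not resolve.

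First, the $\cap/\cup$ comparison requires $e\in\partial(\Lambda_n^{(1)}\cap\Lambda_n^{(2)})$ and $e\in\partial(\Lambda_n^{(1)}\cup\Lambda_n^{(2)})$, and this can simply fail. Writing $e=\{x,y\}$, if $T_1\ni x$, $T_1\not\ni y$ while $T_2\ni y$, $T_2\not\ni x$, then for large $n$ the intersection contains neither endpoint and the union contains both, so neither set is admissible in \eqref{eqn: c_e}. This case is not excluded by $T_1\neq G\setminus T_2$, and one cannot ``fix'' it by replacing $T_2$ with $G\setminus T_2$: the definition of $CD_e(\sigma)$ requires an infimizing sequence of \emph{finite} sets converging to the droplet, and there is no reason that $G\setminus T_2$ admits one when $T_2$ does (the naive choice $B_n\setminus\Lambda_n^{(2)}$ picks up an uncontrolled contribution from $\partial B_n$). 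So the claimed $\sum_{\partial\Delta_n}J\to 0$ is not established.

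Second, and more structurally, the quantity $h$ you want to feed into Proposition~\ref{prop: decoupling} is defined as a limit along a particular (and $J$-dependent, hence potentially $J_f$-dependent) choice of droplets and infimizing sequences, so it is not manifestly a function $h_{B^c}$ of the couplings outside $B=\{e,f\}$. Your appeal to Corollaries~\ref{cor: bound} and~\ref{cor: supersat2} confines $J_f$ to a bounded interval on $\mathcal E_f$ and restricts which edges a droplet boundary can cross, but it does not reorganize $h$ into a genuine $J_f$-independent function --- that is the actual missing step.

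The paper sidesteps both problems by never forming $\Delta_n$ or the limit $h$ at all. It defines two \emph{constrained} critical-value functionals directly as infima over fixed classes of finite sets,
\[
C_{b,e}(J,\sigma)=-\inf_{A:\,b,e\in\partial A}\sum_{\substack{\{x,y\}\in\partial A\\\{x,y\}\neq b,e}}J_{xy}\sigma_x\sigma_y\,,
\qquad
C_e^b(J,\sigma)=-\inf_{\substack{A:\,e\in\partial A\\ b\notin\partial A}}\sum_{\substack{\{x,y\}\in\partial A\\\{x,y\}\neq e}}J_{xy}\sigma_x\sigma_y\ ,
\]
which are by construction functions of $(J,\sigma)$ that do not depend on $J_b$ or $J_e$ and do not depend on any choice of droplet. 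On the event that one critical droplet boundary contains $b$ and another avoids it (with $\sigma_e=\sigma_b=+1$, say), a short two-sided inequality shows $C_{b,e}-J_b=C_e=C_e^b$, so $C_{b,e}-C_e^b=J_b$, and Proposition~\ref{prop: decoupling} with $B=\{b\}$ and $h_{B^c}=C_{b,e}-C_e^b$ kills the event. This is the clean substitute for your $h$, and it is exactly what your argument lacks. As written, your proof does not go through without it.
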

\begin{proof}
Suppose that $CD_e(\sigma)$ contains at least two critical droplets, $T_1$ and $T_2$, not related by $T_1 = G \setminus T_2$,
with positive probability.
Let $S_1$ be the set of edges connecting $T_1$ to $T_1^c$ (similarly for $S_2$). 
Either $S_1 \setminus S_2$ or $S_2 \setminus S_1$ is non-empty. 
We may assume that $S_1 \setminus S_2$ is non-empty.
So there exists $b$ such that 
\begin{equation}\label{eq: endeq1}
M\{ (J,\sigma): \exists~ T_1, T_2 \in CD_e(\sigma) \text{ with } b \in S_1 \setminus S_2\}>0\ .
\end{equation}
Assume that $\sigma_e=+1$ and $\sigma_b=+1$; the other cases are similar. 
Define
\[
C_{b,e}(J,\sigma) = - \inf_{\substack{A:b,e \in \partial A \\ A \text{ finite}}} \sum_{\substack{\{x,y\} \in \partial A \\ \{x,y\} \neq b,e}} J_{xy} \sigma_x \sigma_y \text{ and } 
C_{e}^b(J,\sigma) = - \inf_{\substack{A:e \in \partial A \\ b \notin \partial A \\ A \text{ finite}}} \sum_{\substack{\{x,y\} \in \partial A \\ \{x,y\} \neq e}} J_{xy} \sigma_x \sigma_y\ .
\]
On the event in \eqref{eq: endeq1}, we have $C_{b,e}(J,\sigma)-J_b = C_e(J,\sigma)=C_e^b(J,\sigma)$ because $T_1$ and $T_2$ are in $CD_e(\sigma)$.
Thus \eqref{eq: endeq1} implies that
\[
M\{ (J,\sigma): \sigma_e=\sigma_b=+1, C_{b,e}(J,\sigma) - C_e^b(J,\sigma) = J_b\} > 0\ .
\]
This contradicts Proposition~\ref{prop: decoupling} using $B=\{e\}$ and $h_{B^c}(J,\sigma)= C_{b,e}(J,\sigma)-C_e^b(J,\sigma)$.
\end{proof}

We now state a lemma that will be used in Section~\ref{sec: I=0}.
By Corollary \ref{cor: supersat2}, the critical droplet cannot go through certain super-satisfied edges.
Therefore if there are such super-satisfied edges forcing the critical droplet of an edge $f$ to go through some fixed
edges $e_1$ or $e_2$, then the flexibility \eqref{eqn: flex} of $f$, by definition, cannot be smaller than both of those of $e_1$ and $e_2$.
The situation is depicted in Figure \ref{fig: magic_rung} where the super-satisfied edges appear in grey.
As in Corollary \ref{cor: supersat2}, the edges need to be super-satisfied independently of the value of $J_f$. For this reason,
we work with the value $\mathcal{S}^x_e$ defined in \eqref{eq: vertexSSvalue}. 
\begin{lem}\label{lem: cylinder}
Let  $e_1,e_2, f$ be edges.
Let $U$ be a set of edges with the property that all finite sets $A$ with $f\in\partial A$ and $\partial A \cap U = \varnothing$ must have either $e_1$ or $e_2$ in $\partial A$. 
For each $e \in U$ pick $x(e)$ to be an endpoint of $e$ that is not an endpoint of $f$.
Then
\[
M \{(J,\sigma):F_f(J,\sigma) \geq \min \{ F_{e_1}(J,\sigma),F_{e_2}(J,\sigma)\},~ \forall e\in U~ |J_e| > \mathcal{S}_e^{x(e)}\} = 1\ .
\]
\end{lem}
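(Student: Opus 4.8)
The plan is to condition on the event $\{\forall e \in U,\ |J_e| > \mathcal{S}_e^{x(e)}\}$ and show that on this event, up to an $M$-null set, the flexibility inequality $F_f(J,\sigma) \geq \min\{F_{e_1}(J,\sigma), F_{e_2}(J,\sigma)\}$ holds. The key geometric input is Corollary~\ref{cor: supersat2}: since each $e \in U$ has an endpoint $x(e)$ that is not an endpoint of $f$ and satisfies $|J_e| > \mathcal{S}_e^{x(e)}$, no element $\Lambda$ of $CD_f(\sigma)$ can have $e \in \partial \Lambda$. Hence every critical droplet $\Lambda \in CD_f(\sigma)$ satisfies $\partial \Lambda \cap U = \varnothing$, and by the hypothesis on $U$, we get $e_1 \in \partial \Lambda$ or $e_2 \in \partial \Lambda$.

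First I would fix $\sigma \in \mathcal{G}(J)$ with $J$ in the conditioning event and pick any $\Lambda \in CD_f(\sigma)$, say with $e_1 \in \partial \Lambda$ (the other case is symmetric). Using the representation \eqref{eqn: flex} of flexibility as an infimum of $\sum_{\{z,w\} \in \partial A} J_{zw}\sigma_z\sigma_w$ over finite $A$ with $f \in \partial A$, together with the defining property of critical droplets (that $\Lambda$ is the limit of an infimizing sequence $(\Lambda_n)$ for the critical value of $f$), I would compare the quantity $\sum_{\{z,w\} \in \partial \Lambda_n} J_{zw}\sigma_z\sigma_w$ (which tends to $F_f(J,\sigma)$) against the infimum defining $F_{e_1}(J,\sigma)$, which ranges over finite $A$ with $e_1 \in \partial A$. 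Since $\Lambda_n$ is (eventually, after the convergence $\Lambda_n \to \Lambda$ forces $e_1 \in \partial \Lambda_n$) an admissible competitor in the infimum for $F_{e_1}$ — using that it has $e_1$ in its edge boundary — we obtain $F_{e_1}(J,\sigma) \leq \sum_{\{z,w\} \in \partial \Lambda_n} J_{zw}\sigma_z\sigma_w$ in the limit, i.e. $F_{e_1}(J,\sigma) \leq F_f(J,\sigma)$. In either case $F_f(J,\sigma) \geq \min\{F_{e_1}(J,\sigma), F_{e_2}(J,\sigma)\}$.

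There is one subtlety: the argument above uses "$e_1 \in \partial \Lambda$" for a \emph{single} critical droplet, but $CD_f(\sigma)$ could a priori contain several. However, Corollary~\ref{cor: droplet} guarantees that for $M$-almost every $(J,\sigma)$ the set $CD_f(\sigma)$ contains a unique element up to global complement $T \mapsto G \setminus T$, and complementation does not change $\partial \Lambda$, so there is no ambiguity in which of $e_1, e_2$ lies in the boundary of "the" critical droplet. One must also handle the non-generic signs $\sigma_f = -1$, $\sigma_{e_i} = \pm 1$, but these follow from the same argument after the obvious sign bookkeeping in \eqref{eqn: flex} and \eqref{eqn: c_e}, which is insensitive to the sign of $\sigma$ at the relevant edge.

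The main obstacle I anticipate is not any single deep step but rather making precise the interchange of the infimum (in the definition of $F_{e_1}$) with the limit along the infimizing sequence $(\Lambda_n)$: one has to ensure that the tail terms $\sum_{\{z,w\} \in \partial \Lambda_n,\ \{z,w\} \neq f} J_{zw}\sigma_z\sigma_w$ converge to $\sigma_f C_f(J,\sigma)$ \emph{and} that adding back $J_f \sigma_f$ produces exactly $F_f = |J_f - C_f|$ with the correct sign, so that $\partial \Lambda_n$ is genuinely a valid (asymptotically optimal) competitor for $F_{e_1}$. This is bookkeeping with the critical-value formula \eqref{eqn: c_e} and the flexibility identity \eqref{eqn: flex}, combined with the local-convergence definition $\Lambda_n \to \Lambda$ which ensures $e_1 \in \partial \Lambda_n$ for all large $n$; once set up carefully it is routine, and everything else is an immediate consequence of Corollaries~\ref{cor: supersat2} and~\ref{cor: droplet}.
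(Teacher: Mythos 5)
The paper does not give a separate proof of this lemma; it only gives a heuristic in the paragraph preceding the statement, which says that Corollary~\ref{cor: supersat2} forces the critical droplet of $f$ to contain $e_1$ or $e_2$, and hence $F_f \geq \min\{F_{e_1},F_{e_2}\}$. Your argument is a fleshing-out of exactly that heuristic, so you are on the paper's intended path. Two remarks on the details.

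First, there is a small logical misordering that is worth fixing. You apply the hypothesis on $U$ directly to the critical droplet $\Lambda$ to conclude $e_1 \in \partial \Lambda$ or $e_2 \in \partial\Lambda$. But the hypothesis on $U$ is stated only for \emph{finite} sets $A$, while a critical droplet $\Lambda$ is a local limit of finite sets and may itself be infinite. What you really want is to apply the hypothesis to the finite approximating sets $\Lambda_n$. This works as follows: Corollary~\ref{cor: supersat2} gives $\partial\Lambda \cap U = \varnothing$; since $U$ (together with $\{e_1,e_2\}$) is a finite set of edges and $\Lambda_n \to \Lambda$ locally, for all large $n$ the boundary $\partial\Lambda_n$ agrees with $\partial\Lambda$ on $U \cup \{e_1,e_2\}$, so $\partial\Lambda_n\cap U = \varnothing$; now the hypothesis applies to the finite set $\Lambda_n$ and gives $e_1 \in \partial\Lambda_n$ or $e_2 \in \partial\Lambda_n$; pass to a subsequence along which the same $e_i$ is always present. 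The inequality $F_{e_i} \leq \lim_n \sum_{\partial\Lambda_n} J_{zw}\sigma_z\sigma_w = F_f$ then follows as you describe. Note this implicitly uses that $U$ is finite; that is the case in the paper's application (where $U = \{f_1,\dots,f_n\}$), so it is fine, but it should be flagged since the lemma statement does not say so.

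Second, your invocation of Corollary~\ref{cor: droplet} is unnecessary and should be dropped. You do not need uniqueness (even up to complement) of the critical droplet: any single element of $CD_f(\sigma)$, fed through the argument above, yields $F_{e_1}\le F_f$ or $F_{e_2}\le F_f$, and in either case $\min\{F_{e_1},F_{e_2}\}\le F_f$, which is all that is claimed. If different approximating sets $\Lambda_n$ (or different critical droplets) select different members of $\{e_1,e_2\}$, that is harmless; a subsequence suffices. Avoiding Corollary~\ref{cor: droplet} is also cleaner because that corollary is a probabilistic statement whereas the present inequality is actually deterministic on the event $\{\sigma \in \mathcal{G}(J),\ \forall e \in U:\ |J_e| > \mathcal{S}_e^{x(e)}\}$. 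Aside from these points, the computation that $\sum_{\{z,w\}\in\partial\Lambda_n} J_{zw}\sigma_z\sigma_w \to F_f$ and its sign-independence are correct, and the overall argument is sound.
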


We will now prove two lemmas about the measure $M$ that will be useful later.
They require an extra assumption on the type of events under consideration; see for example \eqref{eqn: event monotone} and \eqref{eqn: event monotone 2}.
The results show that an event of positive probability remains of positive probability after a certain coupling modification.
They in fact provide explicit lower bounds which will be needed when dealing with weak limits of the measure $M$ in Section 4.

\begin{lem}\label{lem: SStypemod}
Let $A \subseteq \Omega_1\times \{\sigma:\sigma_e=+1\}$ be such that
\begin{equation}
\label{eqn: event monotone}
\text{If }(J,\sigma) \in A \text{ then }(J(e,s), \sigma) \in A \text{ for all }s \geq J_e.
\end{equation}
Then for each $\lambda \in \mathbb{R}$,
\begin{equation}\label{eq:monotoneestimate}
M(A,~J_e \geq \lambda) \geq (1/2) \nu([\lambda,\infty))~M(A)\ .
\end{equation}
If instead, we have $A \subseteq \Omega_1\times \{\sigma:\sigma_e=-1\}$ and
$(J(e,s),\sigma) \in A$ for all $s \leq J_e$ then 
\[
M(A,~J_e \leq \lambda) \geq (1/2) \nu((-\infty, \lambda])~M(A)\ .
\]
\end{lem}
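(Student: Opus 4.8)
\textbf{Proof proposal for Lemma~\ref{lem: SStypemod}.}

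The plan is to use the monotonicity property \eqref{eqn: increasing} together with Fubini's theorem to decouple the dependence of $\mu_J$ on $J_e$ from the event $A$, then exploit the hypothesis \eqref{eqn: event monotone} to gain a factor controlled by $\nu([\lambda,\infty))$. First I would write $M(A,~J_e\geq\lambda) = \int \nu(dJ)~\mu_J\{\sigma:(J,\sigma)\in A,~J_e\geq\lambda\}$ and split the outer integral over $J_e$, keeping the remaining couplings $J_{e^c}$ fixed. The key step is that for almost every $J_{e^c}$ and almost every pair $t\leq s$ with $\sigma_e=+1$ on $A$, \eqref{eqn: increasing} gives $\mu_{J(e,t)}\{\sigma:(J(e,t),\sigma)\in A\}\leq \mu_{J(e,s)}\{\sigma:(J(e,t),\sigma)\in A\}$; but by \eqref{eqn: event monotone}, for $s\geq t$ the event $\{(J(e,t),\sigma)\in A\}$ is contained in $\{(J(e,s),\sigma)\in A\}$, so in fact $\mu_{J(e,t)}\{\sigma:(J(e,t),\sigma)\in A\} \leq \mu_{J(e,s)}\{\sigma:(J(e,s),\sigma)\in A\}$. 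In other words, the function $s\mapsto \mu_{J(e,s)}\{\sigma:(J(e,s),\sigma)\in A\}$ is non-decreasing in $s$ for $\nu$-a.e.\ fixed $J_{e^c}$.

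Once this monotonicity of $g(s) := \mu_{J(e,s)}\{\sigma:(J(e,s),\sigma)\in A\}$ in $s$ is established, the estimate is elementary. Let $G_{e^c}(s) := \int^s_{-\infty}$ be shorthand and define $m := \nu([\lambda,\infty))$. Since $g$ is non-decreasing, $\int_\lambda^\infty g(s)\,\nu(ds) \geq$ the average of $g$ over $[\lambda,\infty)$ times $m$, and more usefully, if we let $q$ be the median of $g$ under $\nu$ (or argue by a simple truncation), the portion of the mass of $\int g\,d\nu$ that lies above $\lambda$ is at least $\tfrac12\,\nu([\lambda,\infty))$ of the total when combined with the crude bound $g\leq 1$. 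Concretely: by monotonicity, for any $s\geq\lambda$ and any $t<\lambda$ we have $g(s)\geq g(t)$, hence
\[
\int_\lambda^\infty g(s)\,\nu(ds) \;\geq\; \nu([\lambda,\infty)) \sup_{t<\lambda} g(t) \;\geq\; \nu([\lambda,\infty)) \int_{-\infty}^\lambda g(t)\,\nu(dt),
\]
so writing $I_- = \int_{-\infty}^\lambda g\,d\nu$ and $I_+ = \int_\lambda^\infty g\,d\nu$ we get $I_+ \geq \nu([\lambda,\infty))\,I_-$, and also $I_+ \geq \nu([\lambda,\infty))\cdot\tfrac{I_+}{\nu([\lambda,\infty))}$... more simply, combining $I_+\geq m I_-$ with $I_+ + I_- = \int g\,d\nu$ yields $I_+ \geq \tfrac{m}{1+m}\int g\,d\nu \geq \tfrac{m}{2}\int g\,d\nu$ since $m\leq 1$. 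Integrating over $J_{e^c}$ against $\nu$ and recognizing $\int\big(\int g\,d\nu\big) = M(A)$ gives \eqref{eq:monotoneestimate}. The case $A\subseteq\Omega_1\times\{\sigma:\sigma_e=-1\}$ is symmetric, using \eqref{eqn: decreasing} in place of \eqref{eqn: increasing} and the hypothesis that $(J(e,s),\sigma)\in A$ for $s\leq J_e$, which makes $g(s)$ non-increasing; one then bounds $\int_{-\infty}^\lambda g\,d\nu$ from below.

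I expect the main subtlety to be a careful handling of the "almost every" quantifiers: the monotonicity statements \eqref{eqn: increasing}--\eqref{eqn: decreasing} hold for $\nu$-a.e.\ $J$ and $\nu$-a.e.\ $y\geq J_e$, so promoting this to the clean statement "$g(s)$ is monotone in $s$ for $\nu$-a.e.\ fixed $J_{e^c}$" requires a Fubini argument and an appeal to the fact (from Proposition~\ref{prop: decoupling}, or directly from continuity of $\nu$) that the relevant exceptional sets of coupling values have measure zero. Everything else is a one-variable inequality for monotone functions, so the real content is packaging the decoupling correctly; the structural hypothesis \eqref{eqn: event monotone} is exactly what converts the measure-comparison \eqref{eqn: increasing} (which a priori only compares the \emph{same} event's probability under two measures) into a genuine monotonicity of $s\mapsto g(s)$.
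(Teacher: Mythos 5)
Your proposal is correct and follows essentially the same route as the paper: both arguments reduce to the key inequality $M(A,\,J_e\geq\lambda)\geq\nu([\lambda,\infty))\,M(A,\,J_e<\lambda)$ via the monotonicity \eqref{eqn: increasing} (to compare $\mu_J$ with $\mu_{J(e,y)}$) and \eqref{eqn: event monotone} (to pass from the event $\{(J,\sigma)\in A\}$ to $\{(J(e,y),\sigma)\in A\}$), and then combine with $M(A)=M(A,J_e<\lambda)+M(A,J_e\geq\lambda)$ to extract the factor $1/2$. The only stylistic difference is that you isolate and name the monotonicity of $g(s)=\mu_{J(e,s)}\{\sigma:(J(e,s),\sigma)\in A\}$ before doing a one-variable inequality, whereas the paper carries out the same comparison by inserting an average over $y\in(-\infty,\lambda)$ directly inside the double integral; your route also exposes the slightly sharper constant $\nu([\lambda,\infty))/(1+\nu([\lambda,\infty)))$ before relaxing to $1/2$.
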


\begin{proof}
We will prove the first statement; the second is similar. The left side of \eqref{eq:monotoneestimate} equals
\[
\int \nu(dJ_{\{e\}^c}) \left[ \int_\lambda^\infty \nu(dJ_e) \mu_J\{\sigma: (J,\sigma) \in A\}\right] \ ,
\]
where the first integral is over all couplings $J_b$ for $b \neq e$, and the second is over $J_e$. This is
\begin{eqnarray*}
&&\int \nu(dJ_{\{e\}^c}) \left[ \int_\lambda^\infty \nu(dJ_e) \frac{1}{\nu((-\infty,\lambda))} \int_{-\infty}^\lambda \mu_J\{\sigma:(J,\sigma) \in A\} \nu(dy) \right] \\
&{\overset{\mbox{\eqref{eqn: increasing}}}{\geq}}& \int \nu(dJ_{\{e\}^c}) \left[ \int_\lambda^\infty \nu(dJ_e) \frac{1}{\nu((-\infty,\lambda))} \int_{-\infty}^\lambda \mu_{J(e,y)}\{\sigma:(J,\sigma) \in A\} \nu(dy) \right] \\
&\overset{\mbox{\eqref{eqn: event monotone}}}{\geq}& \int \nu(dJ_{\{e\}^c}) \left[ \int_\lambda^\infty \nu(dJ_e) \frac{1}{\nu((-\infty,\lambda))} \int_{-\infty}^\lambda \mu_{J(e,y)}\{\sigma:(J(e,y),\sigma) \in A\} \nu(dy) \right] \\
&\geq& \nu([\lambda,\infty))~M(A,~J_e<\lambda)\ ,
\end{eqnarray*}
where the third inequality comes from dropping $ \nu((-\infty,\lambda))^{-1}$.
From this computation,
\begin{eqnarray*}
M(A,~J_e \geq \lambda) &\geq& (1/2)\left\{\nu([\lambda,\infty)) ~M(A,~J_e < \lambda) + M(A,~J_e \geq \lambda)\right\}  \\
&\geq& (1/2)\nu([\lambda,\infty))~M(A)\ .
\end{eqnarray*}
\end{proof}

The next lemma does not use the monotonicity property, but its proof is similar in spirit to the previous one.
Instead of considering coupling values that are far from the critical value, we now consider values that are close. 
To show that an event of positive probability remains of positive probability after bringing the coupling closer to the critical value, we need to use the fact that by definition, 
a ground state remains in the support of the uniform measure for all values of $J_e$ up to the critical value. 
\begin{lem}\label{lem: backmodify}
Let $c <d \in \mathbb{R}$ and $A \subseteq \{(J,\sigma):\sigma \in \mathcal{G}(J),~\sigma_e=+1\} \subseteq \Omega_1 \times \Omega_2$ be such that
\begin{equation}
\label{eqn: event monotone 2}
 \text{If $(J,\sigma) \in A$ and $J_e \geq c$ then $(J(e,y),\sigma) \in A$ for all $y\geq c$.}
\end{equation}
Then for all $d >c$,
\[
M(A,~J_e \in [c,d]) \geq \nu([c,d])~M(A,~J_e \geq c)\ .
\]
\end{lem}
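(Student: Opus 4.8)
The plan is to disintegrate the measure $M=\nu(dJ)\,\mu_J$ with respect to the single coupling $J_e$ and to exploit that hypothesis \eqref{eqn: event monotone 2} freezes the relevant set of spin configurations once $J_e\geq c$. Write a coupling configuration as $J=J(e,t)$ with $t=J_e$ and $J_{\{e\}^c}$ the collection of all other couplings; since the $J_f$ are i.i.d., $\nu$ factors as $\nu(dJ_{\{e\}^c})\,\nu(dt)$. For fixed $J_{\{e\}^c}$ set
\[
\mathcal{A}(t):=\{\sigma:(J(e,t),\sigma)\in A\}, \qquad \phi(t):=\mu_{J(e,t)}\{\sigma:(J(e,t),\sigma)\in A\}=\mu_{J(e,t)}(\mathcal{A}(t)).
\]
Then $M(A,\,J_e\in[c,d])=\int\nu(dJ_{\{e\}^c})\int_{[c,d]}\phi(t)\,\nu(dt)$, and similarly with $[c,d]$ replaced by $[c,\infty)$; the measurability of $(J_{\{e\}^c},t)\mapsto\phi(t)$ is of the type established in Sections~\ref{sec:gs} and \ref{sec: properties} (cf.\ Proposition~\ref{prop: mu mble}).

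The key step is that \eqref{eqn: event monotone 2} is really a statement about the sections of $A$: if $t,t'\geq c$ and $\sigma\in\mathcal{A}(t)$, then $(J(e,t),\sigma)\in A$ with $(J(e,t))_e=t\geq c$, so by \eqref{eqn: event monotone 2} (applied with ``$J$'' $=J(e,t)$, and using $J(e,t)(e,y)=J(e,y)$) we get $(J(e,y),\sigma)\in A$ for all $y\geq c$, in particular for $y=t'$; hence $\mathcal{A}(t)=\mathcal{A}(t')$ for all $t,t'\geq c$. Call this common set $\mathcal{A}$. Now I would invoke the structure of the uniform measure: since $A\subseteq\{(J,\sigma):\sigma\in\mathcal{G}(J)\}$ we have $\mathcal{A}(t)\subseteq\mathcal{G}(J(e,t))$ (this is exactly the point that the configurations in $A$ remain ground states, hence in the support of $\mu_{J(e,t)}$, as $t$ decreases toward $c$), and since $\mu_{J(e,t)}$ is uniform on $\mathcal{G}(J(e,t))$, which has $N:=|\mathcal{G}(J)|$ elements for $\nu$-a.e.\ $J$, we get $\phi(t)=N^{-1}\#\mathcal{A}$ for a.e.\ $t\geq c$ (for a.e.\ $J_{\{e\}^c}$, by Fubini, since $t$ ranges over a set of positive $\nu$-measure and $|\mathcal{G}|$ is a.s.\ constant by Corollary~\ref{cor: constant}). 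Thus $\phi$ is a.e.\ constant on $[c,\infty)$, with nonnegative value $K=K(J_{\{e\}^c})=N^{-1}\#\mathcal{A}$.

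It then remains only to integrate. For a.e.\ $J_{\{e\}^c}$,
\[
\int_{[c,d]}\phi(t)\,\nu(dt)=K\,\nu([c,d])\geq \nu([c,d])\,K\,\nu([c,\infty))=\nu([c,d])\int_{[c,\infty)}\phi(t)\,\nu(dt),
\]
where the inequality uses $K\geq 0$ and $\nu([c,\infty))\leq 1$. Integrating against $\nu(dJ_{\{e\}^c})$ yields $M(A,\,J_e\in[c,d])\geq\nu([c,d])\,M(A,\,J_e\geq c)$.

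I do not expect a genuine obstacle: the hypothesis is tailored so that the fiber $\mathcal{A}(t)$ is constant for $t\geq c$, after which the uniformity of $\mu_J$ on an a.s.\ $N$-element set makes $\phi$ constant on $[c,\infty)$ and the estimate is immediate. The only points requiring care are the routine a.e.\ bookkeeping — that $|\mathcal{G}(J(e,t))|=N$ for a.e.\ $t$ given a.e.\ $J_{\{e\}^c}$, and the measurability of $\phi$ — together with correctly handling the re-modification identity $J(e,t)(e,y)=J(e,y)$ when applying \eqref{eqn: event monotone 2}.
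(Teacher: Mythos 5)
Your proof is correct and follows essentially the same route as the paper's: both exploit the hypothesis \eqref{eqn: event monotone 2} together with the uniformity of $\mu_J$ on a set of a.s.\ constant cardinality to compare $\mu_J$-masses before and after modifying $J_e$, and then integrate in $J_e$. The one difference is that you push the hypothesis slightly further and observe that the sections $\mathcal{A}(t)$ are actually \emph{equal} (not merely monotone in cardinality) for $t\geq c$, which makes $\phi$ constant on $[c,\infty)$ and renders the final integral estimate immediate, whereas the paper works only with the one-sided inequality $\mu_J\{\sigma:(J,\sigma)\in A\}\leq\mu_{J(e,y)}\{\sigma:(J(e,y),\sigma)\in A\}$ and compensates by inserting the averaging factor $\nu([c,d])^{-1}\int_c^d\nu(dy)$.
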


\begin{proof}
 From the second condition, for a fixed $J$ with $J_e \geq c$, 
\[
\sharp \{\sigma:(J,\sigma) \in A\} \leq \sharp \{ \sigma:(J(e,y),\sigma) \in A \} \text{ for all } y \geq c\ .
\]
Since $\mu_J$ is the uniform measure and $A \subseteq \{(J,\sigma):\sigma \in \mathcal{G}(J)\}$, this implies $\nu$-almost surely
\[
\mu_J\{\sigma:(J,\sigma) \in A\} \leq \mu_{J(e,y)}\{\sigma:(J(e,y),\sigma) \in A\} \text{ for all } y \geq c\ .
\]
Therefore $M(A,~J_e \geq c)$ equals
\begin{eqnarray*}
& &\int \nu(dJ_{\{e\}^c}) \int_c^\infty \nu(dJ_{e}) \frac{1}{\nu([c,d])} \left[ \int_c^{d} \mu_J\{\sigma:(J,\sigma) \in A\} \nu(dy) \right] \\
&\overset{\mbox{\eqref{eqn: event monotone 2}}}{\leq}& \int \nu(dJ_{\{e\}^c}) \int_c^\infty \nu(dJ_{e}) \frac{1}{\nu([c,d])} \left[ \int_c^{d} \mu_{J(e,y)}\{(\sigma,\sigma'):(J(e,y),\sigma,\sigma') \in A\} \nu(dy) \right] \\
&=& \frac{\nu([c,\infty))}{\nu([c,d])} \int \nu(dJ_{\{e\}^c}) \int_c^{d} \mu_{J(e,y)}\{(\sigma,\sigma'):(J(e,y),\sigma,\sigma')\in A\}\nu(dy) \ ,
\end{eqnarray*}
which is smaller than $\frac{M(A,~J_e \in [c,d])}{\nu([c,d])}$. This implies the lemma.
\end{proof}

\subsection{Properties of the interface}
We now turn to properties of the interface $\sigma\Delta\sigma'$ under the measure
$$
M=\nu(dJ) ~ \mu_J\times \mu_J\ .
$$

The main result of this section is that if $\sigma\Delta \sigma'$ is not empty, then it can be made to contain any fixed edge of the graph with positive probability. 
A similar statement has been proved in \cite[Corollary~2.9]{ADNS10} for the metastate measure on ground states. 
The conclusion is straightforward by translation invariance in the case $G=\Z^2$.
A different approach is needed for the half-plane $G=\Z\times \N$.
For the sake of simplicity, we prove the statement in the case that the graph is planar and each face has four edges. 
The general statement for a graph $G=(V,E)$ with finite degree can be proved the same way.
\begin{prop}
\label{prop: touch}
If there exists an edge $e\in E$ such that
$M\{ (J,\sigma,\sigma'): e\in \sigma\Delta \sigma'\} >0$,
then for any edge $b\in E$,
$M\{ (J,\sigma,\sigma'):  b\in \sigma\Delta \sigma'\} >0$.
\end{prop}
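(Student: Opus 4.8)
The plan is to reduce the general statement to a local, one-edge-at-a-time propagation: it suffices to show that if $e$ and $b$ are adjacent edges (they share a face, say, or a vertex) and $M\{(J,\sigma,\sigma'): e\in\sigma\Delta\sigma'\}>0$, then $M\{(J,\sigma,\sigma'): b\in\sigma\Delta\sigma'\}>0$; the full claim then follows by chaining along a path of faces from $e$ to $b$, using that $G$ is connected. So fix two edges $e$ and $b$ lying on a common face $\mathcal{F}$ (here a $4$-cycle $e, b, e', b'$). The key structural fact is that the interface $\sigma\Delta\sigma'$ is an \emph{even} subgraph: for every vertex $v$, the number of interface edges at $v$ is even (this follows from $\sigma\Delta\sigma' = \{f : (\sigma\sigma')_f \neq +1\}$ and the fact that $\sigma\sigma'$ restricted to any cycle has an even number of $-1$'s). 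Consequently, on any face, the interface contains an even number of the four boundary edges: $0$, $2$, or $4$. In particular, on the event that $e\in\sigma\Delta\sigma'$, the face $\mathcal{F}$ carries at least one other interface edge among $\{b, e', b'\}$.

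First I would formalize this: on the positive-probability event $\{e\in\sigma\Delta\sigma'\}$, decompose according to which of $b, e', b'$ also lies in the interface. One of these three sub-events has positive probability; if it is $\{b\in\sigma\Delta\sigma'\}$ we are done, so assume it is, say, $\{e'\in\sigma\Delta\sigma', b\notin\sigma\Delta\sigma'\}$ (the $b'$ case is symmetric). Now the strategy is to \emph{move the interface off of $e'$ and onto $b$} by modifying the coupling $J_{e'}$ (and possibly $J_b$), using the monotonicity of the family $(\mu_J)$ together with Lemma~\ref{lem: SStypemod} (or Lemma~\ref{lem: backmodify}) to guarantee that positive probability is preserved under the modification. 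Concretely: on the sub-event, $\sigma$ and $\sigma'$ disagree on $(\sigma\sigma')_{e'}$, so one replica, say $\sigma'$, has $\sigma'_{e'}$ opposite to $\sigma_{e'}$; by sending $J_{e'}$ to a super-satisfied value (beyond $\mathcal{S}_{e'}$, using Corollary~\ref{cor: supersat3}), we force $e'\notin\sigma\Delta\sigma'$ for \emph{any} pair of ground states of the modified couplings, since all ground states then agree on $(\sigma\sigma')_{e'} = \mathrm{sgn}(J_{e'})$. But the interface is still nonempty and still even on $\mathcal{F}$ with $e$ carried — wait, we must also ensure $e$ stays in the interface after the modification; this is where care is needed.

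The cleanest route, I expect, is the following. On the sub-event $\{e, e' \in \sigma\Delta\sigma', b\notin\sigma\Delta\sigma'\}$, further split on the sign of $J_b$ and on whether $\sigma_b = \sigma'_b = +1$ or $=-1$; one such refinement has positive probability, say $\{e, e'\in\sigma\Delta\sigma', \sigma_b=\sigma'_b=+1, J_b \le 0\}$. The event "$e$ and $e'$ are in the interface but $b$ is not, and $\sigma_b=\sigma'_b=+1$" is the kind of event to which monotonicity applies in the $J_b$ variable: making $J_b$ \emph{more negative} toward $-\mathcal{S}_b$ does not destroy it (by the appropriate mixed-replica version of \eqref{eqn: mixed}, since both replicas have $\sigma_b=+1$), so by Lemma~\ref{lem: SStypemod} applied to $J_b$ it has positive probability intersected with $\{J_b < -\mathcal{S}_b\}$ — but there $b$ is super-satisfied, forcing $\sigma_b = \sigma'_b = -1$, a contradiction with the event unless the interface has shifted to include $b$. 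More carefully, one should set up the target event $A$ so that pushing $J_b$ below $-\mathcal{S}_b$ \emph{forces} $b\in\sigma\Delta\sigma'$ while the evenness of the interface on $\mathcal{F}$, combined with $e\in\sigma\Delta\sigma'$ being preserved, does the bookkeeping. I expect the main obstacle to be precisely this coupling-modification step: arranging the monotone event $A$ so that (i) $A$ has positive probability by the face-parity argument, (ii) $A$ is monotone in the right direction in the coupling being modified so that Lemma~\ref{lem: SStypemod} applies, and (iii) after the modification the forced sign change at the modified edge, read through the evenness constraint on the face, necessarily places $b$ in the interface — without inadvertently also removing $e$. Handling the general finite-degree / non-$4$-cycle case is then a routine adaptation, replacing the single face by any cycle through $e$ and $b$ and using the same parity-of-interface-on-a-cycle fact.
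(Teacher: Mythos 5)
Your overall reduction---chain along neighboring faces, use the face-parity of the interface to locate a second interface edge on the common face---is exactly how the paper begins, so that part is sound. The difficulty is in the propagation step, and there the mechanism you propose does not work.

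First, there is a directional error in the monotonicity argument. On your sub-event you have $\sigma_b=\sigma'_b=+1$. The monotonicity \eqref{eqn: increasing} (and hence Lemma~\ref{lem: SStypemod}) protects such an event only when you \emph{increase} $J_b$: for $\sigma_b=+1$, $\mu_J\leq \mu_{J(b,y)}$ holds for $y\geq J_b$, not for $y\leq J_b$. Pushing $J_b$ \emph{down} toward $-\mathcal{S}_b$ while the spins at $b$ are $+1$ is precisely the unprotected direction, and the event can simply lose mass as $J_b$ decreases. (The case $\sigma_b=\sigma'_b=+1, \sigma'_b=-1$ handled by \eqref{eqn: mixed} is not your situation, since $b\notin\sigma\Delta\sigma'$ forces $\sigma_b=\sigma'_b$.) Once the direction is corrected you are pushing $J_b$ up toward $+\mathcal{S}_b$, which is consistent with $\sigma_b=+1$ and yields no contradiction.

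Second, and more fundamentally, even if you could super-satisfy $J_b$, that would be counterproductive: super-satisfying an edge forces \emph{all} ground states to agree on its sign (Corollary~\ref{cor: supersat3} / \eqref{eq: supersat sign}), so $b$ is pushed \emph{out} of $\sigma\Delta\sigma'$, never in. Super-satisfying is an interface-repellant. The paper super-satisfies the \emph{fourth} edge $\tilde b$, not $b$, and it does so precisely because $\tilde b$ is not in the interface on the sub-event (so $\sigma_{\tilde b}=\sigma'_{\tilde b}$ and the move is in the safe monotone direction); the purpose is only to simplify the critical-droplet structure of $b$, not to force $b$ into the interface directly.

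What is missing is the key lemma the paper uses to reframe the goal: Lemma~\ref{lem: critical-dw} shows that $M\{b\in\sigma\Delta\sigma'\}>0$ is equivalent to $M\{C_b(J,\sigma)\neq C_b(J,\sigma')\}>0$. With that reformulation, after super-satisfying $\tilde b$, the critical value $C_b$ decomposes into a maximum of terms of the form $\sum_{c\in I}(-J_c\sigma_c)+C_{b,I}(J,\sigma)$ over $I\subseteq\{e,\tilde e\}$, and the equality $C_b(J,\sigma)=C_b(J,\sigma')$ becomes a nontrivial linear relation in $J_e,J_{\tilde e}$ (nontrivial because $\sigma_c=-\sigma'_c$ on $\{e,\tilde e\}$), which has $M$-probability zero by Proposition~\ref{prop: decoupling}. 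Without this detour through critical values, there is no place for the contradiction to come from, since coupling modifications in the monotone-safe direction preserve signs and never force a new disagreement between the two replicas.
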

Before turning to the proof, we record a fact: if $\sigma$ and $\sigma'$ are spin configurations then  a cycle (in particular, a face) of the graph cannot have an odd number of edges in $\sigma \Delta \sigma'$. This is a direct consequence of the following elementary lemma; see for example Theorem~1 in \cite{Bieche}.
\begin{lem}
\label{lem: parity}
For any finite cycle $\mathcal{C}$ in the graph $G$, the parity of $\#\{e\in \mathcal{C}: J_e<0\}$ equals the parity of $\#\{e\in \mathcal{C}: \sigma_e\neq sgn J_e\}$.
\end{lem}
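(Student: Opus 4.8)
The plan is to prove the statement by gauge-transforming away the spins, which reduces everything to the couplings alone. First I would observe that $\sigma_e \neq \operatorname{sgn} J_e$ is equivalent to $\operatorname{sgn}(J_e \sigma_x \sigma_y) = -1$ when $e = \{x,y\}$; indeed $J_e \sigma_e < 0$ precisely when $\sigma_x \sigma_y$ disagrees with $\operatorname{sgn} J_e$. So I want to compare the number of negative $J_e$ along $\mathcal{C}$ with the number of edges $e = \{x,y\}$ along $\mathcal{C}$ for which $J_{xy}\sigma_x\sigma_y < 0$.

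The key step is to take the product of the signs around the cycle. Write $\mathcal{C}$ as a sequence of vertices $v_0, v_1, \dots, v_n = v_0$ with consecutive pairs forming the edges of $\mathcal{C}$. Then
\[
\prod_{i=0}^{n-1} \operatorname{sgn}(J_{v_iv_{i+1}}\sigma_{v_i}\sigma_{v_{i+1}}) = \left(\prod_{i=0}^{n-1} \operatorname{sgn} J_{v_iv_{i+1}}\right)\left(\prod_{i=0}^{n-1}\sigma_{v_i}\sigma_{v_{i+1}}\right),
\]
and in the second factor on the right each $\sigma_{v_i}$ appears exactly twice (once as the right endpoint of edge $\{v_{i-1},v_i\}$ and once as the left endpoint of $\{v_i,v_{i+1}\}$), so that product is $+1$. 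Hence the product of $\operatorname{sgn}(J_e\sigma_e)$ over $e \in \mathcal{C}$ equals the product of $\operatorname{sgn} J_e$ over $e \in \mathcal{C}$. A product of $\pm 1$'s is $-1$ exactly when an odd number of the factors are $-1$, so the parity of $\#\{e \in \mathcal{C} : J_e\sigma_e < 0\} = \#\{e \in \mathcal{C} : \sigma_e \neq \operatorname{sgn} J_e\}$ coincides with the parity of $\#\{e \in \mathcal{C} : J_e < 0\}$, which is the claim. (Here I use that $\nu$ is continuous, so $\nu$-almost surely no $J_e$ is zero and all signs are well defined; one may also simply adopt a convention for $\operatorname{sgn} 0$ since it does not affect the telescoping argument.)

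There is no real obstacle here: the only thing to be careful about is the bookkeeping that each vertex of the cycle is shared by exactly two of its edges, which is what makes the spin contributions cancel in pairs — this is exactly the ``telescoping around a loop'' phenomenon and is the heart of the Toulouse frustration criterion. I would then remark that the consequence used in the proof of Proposition~\ref{prop: touch} is immediate: applying the lemma with $\sigma$ and then with $\sigma'$ and subtracting, the parity of $\#\{e \in \mathcal{C}: \sigma_e \neq \operatorname{sgn} J_e\}$ and the parity of $\#\{e \in \mathcal{C} : \sigma_e' \neq \operatorname{sgn} J_e\}$ are equal, hence $\#\{e \in \mathcal{C} : \sigma_e \neq \sigma_e'\} = \#(\mathcal{C} \cap \sigma\Delta\sigma')$ is even.
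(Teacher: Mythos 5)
Your argument is correct and is exactly the standard ``gauge invariance'' proof of the Toulouse frustration criterion: reduce $\sigma_e \neq \operatorname{sgn} J_e$ to $\operatorname{sgn}(J_e\sigma_e) = -1$, take the product of signs around the cycle, and note that the spin factors telescope to $+1$ because each vertex of the cycle is incident to exactly two of its edges, so $\prod_e \operatorname{sgn}(J_e\sigma_e) = \prod_e \operatorname{sgn} J_e$ and the two parities must agree. The paper does not actually write out a proof of Lemma~\ref{lem: parity}; it cites Theorem~1 of \cite{Bieche} and immediately applies the consequence you correctly identify at the end of your proposal (subtracting the statements for $\sigma$ and $\sigma'$ to see that $|\mathcal{C}\cap(\sigma\Delta\sigma')|$ is even). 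Your proof supplies precisely what the citation is standing in for, and the appeal to continuity of $\nu$ to ensure $J_e\neq 0$ almost surely is the right hygienic remark to make.
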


The following lemma interprets the event that an edge is in the interface in terms
of the critical values of $e$ in the two ground states.
\begin{lem}
\label{lem: critical-dw}
For any edge $e$,
$M\{  (J,\sigma,\sigma'): e\in \sigma\Delta \sigma'\} >0$
if and only if
$M\{ (J,\sigma,\sigma'): C_e(J,\sigma)\neq C_e(J,\sigma')\}> 0$.
\end{lem}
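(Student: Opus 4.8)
\emph{Plan.} Since $e\in\sigma\Delta\sigma'$ exactly when $\sigma_e\neq\sigma'_e$, the event in the lemma is just $\{\sigma_e\neq\sigma'_e\}$. I would first record the pointwise dictionary: for $\sigma\in\mathcal{G}(J)$ one has $\sigma_e=+1\Leftrightarrow J_e\geq C_e(J,\sigma)$ and $\sigma_e=-1\Leftrightarrow J_e\leq C_e(J,\sigma)$ (the forward implications are \eqref{eqn: equiv}; the converses hold because $\sigma$ cannot have both signs at $e$). Combined with the fact recorded just before Proposition~\ref{prop: decoupling}, that $M$-a.s.\ no coupling value equals the critical value of either replica, this gives that $M$-almost surely, for $\sigma,\sigma'\in\mathcal{G}(J)$, the edge $e$ lies in $\sigma\Delta\sigma'$ if and only if $J_e$ lies strictly between $C_e(J,\sigma)$ and $C_e(J,\sigma')$. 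The ``only if'' direction of the lemma is then immediate, since $J_e$ lying strictly between the two critical values forces $C_e(J,\sigma)\neq C_e(J,\sigma')$.

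For the converse, assume $M\{C_e(J,\sigma)\neq C_e(J,\sigma')\}>0$; by replica symmetry $M\{C_e(J,\sigma)<C_e(J,\sigma')\}>0$. Recalling that $C_e$ depends only on $(J_{\{e\}^c},\sigma)$ by Lemma~\ref{lem: critical formula}, Fubini produces a positive-$\nu$-measure set of environments $J_{\{e\}^c}$ for which there exist $y\in\R$ and $\sigma,\sigma'\in\mathcal{G}(J(e,y))$ with $C_e(\sigma)<C_e(\sigma')$. Fix such a $J_{\{e\}^c}$, which I may also take to have $\mathcal{G}_{+_e}$ and $\mathcal{G}_{-_e}$ finite and nonempty (true for a.e.\ $J_{\{e\}^c}$ via Corollary~\ref{cor: supersat3} applied for $|J_e|>\mathcal{S}_e$). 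Put $m^+=\min_{\tau\in\mathcal{G}_{+_e}}C_e(\tau)$ and $m^-=\max_{\tau\in\mathcal{G}_{-_e}}C_e(\tau)$, both finite by Corollary~\ref{cor: bound}. The key step is to show $m^+\leq C_e(\sigma)$ and $C_e(\sigma')\leq m^-$: if $\sigma_e=+1$ then $\sigma\in\mathcal{G}_{+_e}$ and the first inequality is trivial, whereas if $\sigma_e=-1$ then $\sigma\in\mathcal{G}_{-_e}$ and Proposition~\ref{prop: pair} yields $\widetilde\sigma\in\mathcal{G}_{+_e}$ with $C_e(\widetilde\sigma)\leq C_e(\sigma)$, so $m^+\leq C_e(\widetilde\sigma)\leq C_e(\sigma)$; the inequality $C_e(\sigma')\leq m^-$ is symmetric. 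Hence $m^+<m^-$ on this positive-measure set of environments.

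To finish, for any $J_{\{e\}^c}$ with $m^+<m^-$, pick $J_e$ in the nonempty open interval $(m^+,m^-)$ avoiding the finitely many numbers $\{C_e(\tau):\tau\in\mathcal{G}_{+_e}\cup\mathcal{G}_{-_e}\}$; by the definition of the critical values and monotonicity (Lemma~\ref{lem: monotone}), a minimizer $\tau^+\in\mathcal{G}_{+_e}$ and a maximizer $\tau^-\in\mathcal{G}_{-_e}$ both lie in $\mathcal{G}(J(e,J_e))$, so $\mathcal{G}(J(e,J_e))$ contains a pair differing at $e$. Since $\nu$ is continuous with full support, this set of $J_e$ has positive $\nu$-measure, so $\nu\{J:\exists\,\tau,\tau'\in\mathcal{G}(J),\ \tau_e\neq\tau'_e\}>0$; on that event $\mu_J\times\mu_J\{(\sigma,\sigma'):\sigma_e\neq\sigma'_e\}\geq|\mathcal{G}(J)|^{-2}$, which is bounded below because $|\mathcal{G}(J)|$ is a.s.\ a finite constant. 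Therefore $M\{(J,\sigma,\sigma'):e\in\sigma\Delta\sigma'\}>0$.

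The main obstacle is the converse, and within it the comparison of the critical values of the witnessing ground states $\sigma,\sigma'$ with the extremal values $m^\pm$ over $\mathcal{G}_{\pm_e}$: after moving $J_e$ into the gap between $C_e(\sigma)$ and $C_e(\sigma')$ one cannot simply keep $\sigma$ and $\sigma'$, because whichever of them has the ``wrong'' sign at $e$ is no longer a ground state there; instead one must pass to its critical-droplet reflection via Proposition~\ref{prop: pair} and check that the critical-value inequalities line up. The remaining work is routine measure theory (Fubini, full support and continuity of $\nu$, finiteness of $\mathcal{G}(J)$).
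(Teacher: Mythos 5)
Your proof is correct and follows essentially the same route as the paper: the forward direction combines \eqref{eqn: equiv} with the almost-sure fact $J_e\neq C_e(J,\sigma)$ (via Proposition~\ref{prop: decoupling}), and the converse uses Proposition~\ref{prop: pair} to move a critical droplet so that both signs of $\sigma_e$ can coexist in $\mathcal{G}(J(e,y))$ for $y$ in a nonempty interval, then invokes continuity of $\nu$ and finiteness of $|\mathcal{G}(J)|$. Your introduction of the extremal values $m^\pm$ over $\mathcal{G}_{\pm_e}$ is a clean reorganization of the paper's step where it first reduces, without loss of generality, to the case $\sigma_e=\sigma'_e$ before applying Proposition~\ref{prop: pair}, but the underlying mechanism is identical.
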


\begin{proof}
$\Longrightarrow$. By assumption,
$$
M\{(J,\sigma,\sigma'): \sigma_e=+1, \sigma'_e=-1\}>0\ .
$$
By \eqref{eqn: equiv}, $\sigma\in \mathcal{G}(J)$ and $\sigma_e=+1$ together imply that
$J_e\geq C_e(J,\sigma)$. Similarly, $\sigma'\in \mathcal{G}(J)$ and $\sigma'_e=-1$ together imply that
$J_e\leq C_e(J,\sigma')$. Therefore
\begin{equation}
\label{eqn: equality}
M\{(J,\sigma,\sigma'): \sigma_e=+1, \sigma'_e=-1,C_e(J,\sigma)\leq J_e\leq C_e(J,\sigma')\}>0\ .
\end{equation}

To complete the proof, observe that Proposition~\ref{prop: decoupling} implies
$$
M\{ (J,\sigma,\sigma'): C_e(J,\sigma)= J_e \text{ or } C_e(J,\sigma') = J_e\} =0\ .
$$

$\Longleftarrow$.
We may assume that with positive probability, on the event $\{C_e(J,\sigma)=C_e(J,\sigma')\}$, $\sigma$ and $\sigma'$ have the same sign at $e$. Without loss of generality, taking $\sigma_e=\sigma'_e=+1$,
$$
M\{ (J,\sigma,\sigma'): \sigma_e=\sigma'_e=+1, C_e(J,\sigma)\neq C_e(J,\sigma')\}>0\ .
$$
In particular, there exists a deterministic $\delta>0$ such that 
$$
M\{ (J,\sigma,\sigma'): \sigma_e=\sigma'_e=+1, C_e(J,\sigma') >C_e(J,\sigma)+\delta \}>0\ .
$$
Hence there is a subset of the couplings of positive $\nu$-probability such that on this set 
$$
 \mu_J\times\mu_J\{ (\sigma,\sigma'): \sigma_e=\sigma'_e=+1, C_e(J,\sigma') >C_e(J,\sigma)+\delta \}>0
$$
Fix the couplings other than $J_e$ and take $(\sigma,\sigma')$ in the above event.
By \eqref{eqn: equiv}, we must have $J_e\geq C_e(J,\sigma)$
and $J_e\geq C_e(J,\sigma')$. 
From Proposition~\ref{prop: pair}, there exists $\sigma''\in \mathcal{G}_{-_e}$ such that $C_e(J,\sigma'')\geq C_e(J,\sigma')$.
In particular, by Corollary~\ref{cor: supersat3}, $\sigma''\in \mathcal{G}(J)$ for $J_e$ in the non-empty interval $(C_e(J,\sigma), C_e(J,\sigma''))$. Since $\mu_J$ is supported on a finite number of spin configurations, this implies that on a subset of positive $\nu$-probability
$$
 \mu_J\times\mu_J\{ (\sigma,\sigma''): \sigma_e=+1,\sigma''_e=-1 \}>0\ .
$$
Integrating over $J$ completes the proof.
\end{proof}

\begin{proof}[Proof of Proposition~\ref{prop: touch}]
By Lemma~\ref{lem: critical-dw},
it suffices to show that 
\begin{equation}
\label{eqn: to prove edge}
M\{(J,\sigma,\sigma'): C_b(J,\sigma)\neq C_b(J,\sigma')\} >0\ .
\end{equation}
Assume that
\begin{equation}
\label{eqn: touch assumption}
M\{(J,\sigma,\sigma'):\sigma_e\neq\sigma'_e\} >0\ .
\end{equation}
Without loss of generality, we can assume that $b$ and $e$ are edges of the same face. 
Otherwise, we simply apply the same argument successively on a path of neighboring faces from $b$ to $e$.
Let us denote the other edges of the square face by $\tilde{b}$ and $\tilde{e}$. 

$\sigma\Delta\sigma'$ contains $e$ with positive probability. By the paragraph preceding
the statement of the proposition, if it contains $e$ it must also contain another edge of the face.
If it contains $b$ with positive probability we are done, so suppose it contains $\tilde{e}$ with positive probability.
Suppose also that with positive probability  $\widetilde{b}$ is not in the interface.
The other case is proved the same way and is simpler. We will indicate how to deal with it at the end of the proof.

In our notation, $e,\tilde e \in \sigma \Delta \sigma'$ and $b,\tilde b \notin \sigma \Delta \sigma'$. 
Therefore $\sigma_e\neq \sigma'_e$, $\sigma_{\tilde e} \neq \sigma'_{\tilde e}$, $\sigma_b = \sigma'_b$ and $\sigma_{\tilde{b}} = \sigma'_{\tilde b}$ on this event. 
The hypothesis \eqref{eqn: touch assumption} now reduces to $M(B) >0$
for the event 
$$
B=\{(\sigma,\sigma'):  ~\sigma_{b}=\sigma'_{b},~\sigma_{\tilde b}=\sigma'_{\tilde b},  ~\sigma_{e}\neq \sigma'_{e}, \sigma_{\tilde e}\neq \sigma'_{\tilde e}\}\ .
$$
By \eqref{eqn: increasing}, for any $J$ such that $\mu_J\times\mu_J(B\cap\{\sigma:\sigma_{\tilde b}=+1\})>0$, if $J'$ is a configuration with $J'_{\tilde b}>J_{\tilde b}$, and $J'_a=J_a$ for $a \neq \tilde b$, then $\mu_{J'}\times\mu_{J'}(B)>0$. Similarly, for any $J$ such that $\mu_J\times\mu_J(B\cap\{\sigma:\sigma_{\tilde b}=-1\})>0$, if $J'$ is a configuration with $J'_{\tilde b}<J_{\tilde b}$, and $J'_a=J_a$ for $a \neq \tilde b$, then $\mu_{J'}\times\mu_{J'}(B)>0$. In particular, this implies that if $x$ is one of the endpoints of $\tilde b$ that is not also an endpoint of $b$,
$$
\int_{\{J:|J_{\tilde b}|>\mathcal{S}_{\tilde b}^x\}} \nu(dJ) ~\mu_J\times\mu_J(B) ~ >0\ .
$$
We show that
\begin{equation}
\label{eqn: to show touch}
\int_{\{J:|J_{\tilde b}|>\mathcal{S}_{\tilde b}^x\}} \nu(dJ) ~\mu_J\times\mu_J(B\cap\{(\sigma,\sigma'): C_b(J,\sigma)\neq C_b(J,\sigma')\}) ~ >0\ ,
\end{equation}
thereby proving \eqref{eqn: to prove edge} and the proposition.

The expression for the critical value $C_b(J,\sigma)$ can be written as follows. 
Let $F=\{b,\tilde{b},e,\tilde e\}$. 
For $I$ a non-empty subset of $\{\tilde b,\tilde e, e\}$, 
write $\mathcal{I}_{b,I}$ for the collection of finite sets of vertices $A$ whose boundary $\partial A$ intersected with $F$ equals the union of $\{b\}$ with $I$. This collection might be empty for some choice of $I$.
We restrict only to sets $I$ for which $\mathcal{I}_{b,I}$ is not empty.
Let
$$
C_{b,I}(J,\sigma)=\sup_{A \in \mathcal{I}_{b,I}}\left\{-\sum_{\substack{\{x,y\}\in\partial A\\ \{x,y\}\notin F }} J_{xy}\sigma_x\sigma_y\right\}\ .
$$
In this notation, the expression \eqref{eqn: c_e} becomes
$$
C_b(J,\sigma)=\max_{I\subseteq F\setminus \{b\} } \left\{\sum_{c\in I} -J_c\sigma_c + C_{b,I}(J,\sigma)\right\}\ .
$$
Let $\Lambda \in CD_b(\sigma)$, $\Lambda' \in CD_b(\sigma')$ and
note that both $\partial \Lambda$ and $\partial \Lambda'$ must contain at least one edge of the face other than $b$.
When $|J_{\tilde b}| > \mathcal{S}_{\tilde b}^x$, Corollary \ref{cor: supersat2} gives that neither can contain $\tilde{b}$, 
so they must both contain $b$ and other edges in $\{e,\tilde{e}\}$.
Therefore on this event, the above definition of the critical values reduces to
$$
C_b(J,\sigma)=\max_{I\subseteq\{e,\tilde e\}} \left\{\sum_{c\in I}-J_c\sigma_c + C_{b,I}(J,\sigma)\right\}\ .
$$
Since the max is attained, it holds on the event $\{J: |J_{\tilde b}|>\mathcal S_{\tilde b}^x\}$ that
$$
\begin{aligned}
&\mu_J\times\mu_J\Big(B\cap \{C_b(J,\sigma)=C_b(J,\sigma')\}\Big)\leq \\
&\qquad \sum_{I\subseteq\{e,\tilde e\}, I'\subseteq\{e,\tilde e\} }\mu_J\times\mu_J
\left \{\sum_{c\in I} -J_c\sigma_c + C_{b,I}(J,\sigma)=\sum_{c'\in I'} -J_{c'}\sigma'_{c'}
+ C_{b,I'}(J,\sigma')\right\}\ .
\end{aligned}
$$
The right-hand side is the same as
$$
 \sum_{I\subseteq\{e,\tilde e\}, I'\subseteq\{e,\tilde e\}  }\mu_J\times\mu_J
\Big\{ C_{b,I}(J,\sigma)- C_{b,I'}(J,\sigma')=\sum_{c\in I} J_c\sigma_c -\sum_{c'\in I'} J_{c'}\sigma'_{c'}\Big\}\ .
$$
The right-hand side of the equality in the event is a linear combination of the $J_c$'s, $c\in I\cup I'$,
where the coefficients, which we call $s_c$, can only take the values $0,\pm1,\pm2$. 
Most importantly, for each choice of $I,I'$, the $s_c$'s cannot all be zero since $I$ and $I'$ are not empty, and $\sigma_c=-\sigma'_c$ for $c\in \{e,\tilde{e}\}$. Letting $\mathcal{J}_{I,I'}$ be the set of non-zero $\{0,\pm 1, \pm 2\}$-valued vectors $s$, with each entry corresponding to an element in $I \cup I'$, we see that the above is smaller than
$$
\sum_{I\subseteq\{e,\tilde e\}, I'\subseteq\{e,\tilde e\} } \sum_{ s \in \mathcal{J}_{I,I'}} \mu_J\times\mu_J
\Big\{ C_{b,I}(J,\sigma)- C_{b,I'}(J,\sigma')=\sum_{c\in I\cup I'} J_cs_c\Big\}\ .
$$
To show \eqref{eqn: to show touch}, integrate over $\nu$ and use Proposition~\ref{prop: decoupling} 
with $B=\{e,\tilde{e}\}$ and $h_{B^c}=C_{b,I}(J,\sigma)- C_{b,I'}(J,\sigma')$.

This completes the proof in the case that $\tilde b$ is not in the interface. If the probability of this is zero (that is, if \eqref{eqn: to show touch} does not hold), then the proof is easier. We do not need to supersatisfy $J_{\tilde b}$; we simply take $I,I'$ to be subsets of $\{\tilde b, e, \tilde e\}$ and complete the proof from after equation \eqref{eqn: to show touch}.

\end{proof}

Before turning to the proof of the main result, we mention that in the case that the graph is invariant under a set of transformations (for example, translations), 
the uniform measure inherits a covariance property. Translation-covariant measures on ground states are typically not easy to construct.
The only other example known to the authors is the metastate on ground states constructed from suitable boundary conditions. An advantage
of a translation-covariant measure is that the corresponding $\nu$-averaged measure is  preserved under translations.
\begin{lem}
\label{lem: covariance}
Let $G=\Z^d$ or $G=\Z\times\N$ and suppose $|\mathcal{G}(J)|<\infty$. 
The uniform measure $\mu_J$ is translation-covariant. That is, if $T$ is a translation of $\Z^d$ or a horizontal translation of $\Z\times\N$, 
then for any $B \in \mathcal{F}_2$,
$$
\mu_{TJ}(B)=\mu_{J}\{\sigma: T \sigma\in B\} \text{ for $\nu$-almost all $J$.}
$$
In particular, the measure $M$ on $\Omega_1\times \Omega_2$ (or on $\Omega_1\times\Omega_2\times\Omega_2$) is translation-invariant.
\end{lem}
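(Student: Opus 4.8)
\textbf{Proof proposal for Lemma~\ref{lem: covariance}.}
The plan is to exploit two facts: (i) the set of ground states transforms covariantly under $T$, namely $\mathcal{G}(TJ) = T\,\mathcal{G}(J) := \{T\sigma : \sigma \in \mathcal{G}(J)\}$, and (ii) $|\mathcal{G}(J)| < \infty$ is assumed, so the uniform measure is a genuine normalized counting measure. First I would verify (i) directly from the defining property \eqref{eq:GSproperty}: for a translation $T$ of the relevant group, the map $A \mapsto T(A)$ is a bijection on finite vertex sets with $\partial(T A) = T(\partial A)$, and $(TJ)_{T(x)T(y)} = J_{xy}$ together with $(T\sigma)_{T(x)} = \sigma_x$ gives
\[
\sum_{\{x,y\} \in \partial(TA)} (TJ)_{xy}(T\sigma)_x(T\sigma)_y = \sum_{\{x,y\}\in\partial A} J_{xy}\sigma_x\sigma_y\ ,
\]
so $\sigma \in \mathcal{G}(J)$ if and only if $T\sigma \in \mathcal{G}(TJ)$. (One must note that for $G = \Z\times\N$ only horizontal translations preserve the graph, which is why the statement restricts to those.)

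Next, since $|\mathcal{G}(TJ)| = |\mathcal{G}(J)|$ and both are finite, for any $B \in \mathcal{F}_2$,
\[
\mu_{TJ}(B) = \frac{1}{|\mathcal{G}(TJ)|}\sum_{\tau \in \mathcal{G}(TJ)} \mathbf{1}_B(\tau)
= \frac{1}{|\mathcal{G}(J)|}\sum_{\sigma \in \mathcal{G}(J)} \mathbf{1}_B(T\sigma)
= \mu_J\{\sigma : T\sigma \in B\}\ ,
\]
where the middle equality reindexes the sum via the bijection $\sigma \mapsto T\sigma$ from $\mathcal{G}(J)$ onto $\mathcal{G}(TJ)$ established above. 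This identity holds for every $J$ with $|\mathcal{G}(J)| < \infty$, hence for $\nu$-almost every $J$, giving the first assertion. For the measurability caveat: both sides are $\mathcal{F}_1$-measurable functions of $J$ by Proposition~\ref{prop: mu mble}, so the $\nu$-a.s. equality is meaningful.

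For the final assertion, I would use that $\nu$ is a product measure over i.i.d.\ couplings and that $T$ acts on $\Omega_1$ by permuting coordinates, so $\nu$ is $T$-invariant: $\nu \circ T^{-1} = \nu$. Then for a product-form test set, or more precisely for any bounded measurable $\Phi$ on $\Omega_1\times\Omega_2$,
\[
\int \nu(dJ)\, \mu_J\{\sigma : \Phi(J, T\sigma) = \cdot\} \quad\text{reindexed by } J \mapsto TJ
\]
becomes $\int \nu(dJ)\,\mu_{TJ}\{\tau : \Phi(TJ,\tau)\}$, and applying the covariance identity just proved turns this into $\int \nu(dJ)\,\mu_J\{\sigma : \Phi(TJ, T\sigma)\}$, i.e.\ $M$ is invariant under the joint shift $(J,\sigma)\mapsto(TJ,T\sigma)$ on $\Omega_1\times\Omega_2$; the two-replica case is identical with $(J,\sigma,\sigma')\mapsto(TJ,T\sigma,T\sigma')$. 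The only genuinely delicate point is bookkeeping the a.s.\ qualifier — the covariance identity holds only on the full-measure set $\{|\mathcal{G}(J)|<\infty\}$, which by hypothesis and Corollary~\ref{cor: constant} is all of $\Omega_1$ up to $\nu$-null sets, and this set is $T$-invariant since $|\mathcal{G}(TJ)| = |\mathcal{G}(J)|$ — so no serious obstacle remains; the content is entirely the elementary bijection in step (i).
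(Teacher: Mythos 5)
Your proof is correct and follows essentially the same route as the paper: establish the bijection $\sigma \mapsto T\sigma$ between $\mathcal{G}(J)$ and $\mathcal{G}(TJ)$, reindex the counting measure, and then use translation-invariance of $\nu$ to pass from covariance of $\mu_J$ to invariance of $M$. If anything, you make explicit the graph-theoretic verification that $\partial(TA) = T(\partial A)$, which the paper leaves implicit, and you correctly observe that $|\mathcal{G}(TJ)| = |\mathcal{G}(J)|$ holds deterministically (not merely $\nu$-a.s.), which is a slightly cleaner justification of the middle equality than the paper's appeal to a.s.\ constancy.
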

\begin{proof}
Using the fact that $|\mathcal{G}(J)|$ is constant $\nu$-almost surely, one gets
$$
\begin{aligned}
\mu_{TJ}(B)=\frac{\#\{\sigma \in \mathcal{G}(TJ): \sigma\in B\}}{|\mathcal{G}(TJ)|}=\frac{\#\{\sigma \in \mathcal{G}(J): T \sigma\in B\}}{|\mathcal{G}(J)|} 
=\mu_{J}\{\sigma: T \sigma\in B\}\ .
\end{aligned}
$$

For the second assertion, let $B'\subset \R^E\times \{-1,+1\}^V$. Define 
$T^{-1}B'=\{(J,\sigma): (TJ,T\sigma)\in B' \}$. 
Then the first claim implies that the probability of $T^{-1}B'$ is
$$
M(T^{-1}B')
= \int \nu(dJ) \mu_J \{\sigma: (TJ,T\sigma)\in B'\} = \int \nu(dJ) \mu_{TJ} \{\sigma:(TJ,\sigma)\in B'\}\ .
$$
As $\nu$ is translation-invariant, we may replace $\nu(dJ)$ by $\nu(dTJ)$ on the right side. 
The right side then equals $\int \nu(dJ) \mu_{J}(\sigma:(J,\sigma)\in B')=M(B')$ as claimed.
\end{proof}

\section{The main result on the half-plane}

\subsection{Preliminaries}
In this section, we consider the EA model on the half-plane $H=\Z\times \N$ with free boundary conditions at the bottom.
Recall from Corollary~\ref{cor: constant} that the number of ground states $|\mathcal{G}(J)|$
is non-random. We continue to assume that $|\mathcal{G}(J)| < \infty$.
Write
\[
M = \nu(dJ) \times \left( \mu_J \times \mu_J \right)\ ,
\]
where $\mu_J$ is the uniform measure on $\mathcal{G}(J)$.
We will use the notation that sampling from $M$ amounts to obtaining a triple $(J,\sigma,\sigma')$ from the space
\[
\Omega := \mathbb{R}^{E_H} \times \{-1,+1\}^{V_H} \times \{-1,+1\}^{V_H}\ ,
\]
where $E_H$ and $V_H$ denote the edges and vertices of the half-plane respectively. 
To show Theorem~\ref{thm: mainthm}, it is sufficient to prove that $M\{(J,\sigma,\sigma'): \sigma\Delta\sigma'\neq\emptyset\}=0$.
This implies that if $|\mathcal{G}(J)|<\infty$, then $|\mathcal{G}(J)|=2$. 
We will derive a contradiction from the following:
\begin{equation}
\label{eqn: assumption}
\text{ assume that } M\{(J,\sigma,\sigma'): \sigma\Delta\sigma'\neq\emptyset\}>0\ .
\end{equation}

For this purpose, a representation of the interface $\sigma\Delta\sigma'$ in the dual lattice will be 
used. Instead of thinking of an edge $e$ as being in the interface, we think of the dual edge crossing $e$ as being in it. We denote this dual edge by $e^*$. 
The interface represented this way is a collection of paths in the dual lattice.
The reader is referred to Figure~\ref{fig: dw} for an illustration of this representation.
Note that these dual paths cannot contain loops; otherwise, $\sigma$
or $\sigma'$ would violate the ground state property \eqref{eq:GSproperty}. 
Moreover, it is elementary to see that the interface cannot have dangling ends -- dual vertices with degree one in the interface (for example, using Lemma \ref{lem: parity}).
A {\it domain wall} refers to a connected component of $\sigma \Delta \sigma'$, viewed as edges in the dual lattice. 
In the case of the half-plane $G=\Z\times\N$, we call any domain wall that crosses the $x$-axis a {\it tethered domain wall}.

\begin{figure}[h]
\begin{center}
\includegraphics[height=6cm]{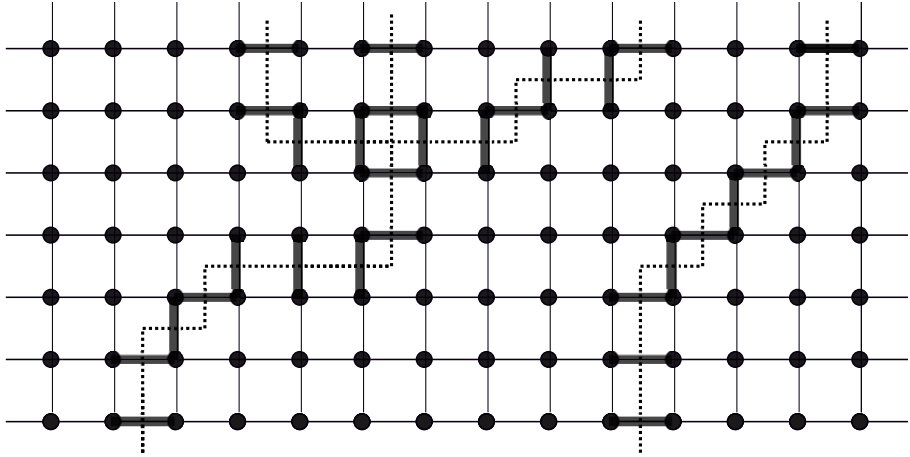}
\end{center}
\caption{An example of an interface between ground states on the half-plane. The edges in $\sigma\Delta\sigma'$ are the thick ones.
The representation of the interface as dual paths is depicted by the dotted lines. In this example, there are two domain walls and they are both tethered.}
\label{fig: dw}
\end{figure}

The method used to derive a contradiction is similar in spirit to the one in \cite{ADNS10}. 
From $M$ we construct a measure on ground states in $\Z^2$ (denoted by $\widetilde{M}$) with two contradicting properties: 
on the one hand any interface sampled from $\widetilde M$ must be disconnected; on the other hand it must be connected.
The construction of $\widetilde{M}$ is outlined below and some properties are proved. 
The proof of non-connectivity is given in Section~\ref{sec: disconnected}. 
The proof of connectivity follows the method of Newman \& Stein \cite{NS01} and is in Section~\ref{sec: connected}.

The first step is to extend the measure $M$ to include the critical values. 
This extension is needed because the critical values are not continuous functions of $(J,\sigma)$ in the product topology; they depend on the couplings in a non-local manner, as can be seen from the formula \eqref{eqn: c_e}.
Therefore their distribution is not automatically preserved under weak limits. Enlarging the probability space to include them will bypass this obstacle. For illustration, consider the event that a fixed edge $e$ has $C_e(J,\sigma) \in I$, for some fixed open interval $I$. The probability of this event is not necessarily preserved under weak limits. However, after we include the variables $C_e(J,\sigma)$ in our space, this event becomes a cylinder event and therefore its probability will behave nicely after taking limits.

We remark that a different type of extension (but with the same spirit) was done in \cite{ADNS10}. Namely, a measure called {\it the excitation metastate}
(introduced first in \cite{NS01}) was defined to include the critical values but also all information about local changes of the couplings.
Implementing this type of construction turns out to be more delicate in the case of the uniform measure. 
We therefore abandon it and turn to a simpler framework. 
The monotonicity property defined in Section \ref{sect: monotonicity} is the key tool for this approach.

For a fixed $J$, edge $e$, and $\sigma \in \mathcal{G}(J)$, recall the definition of the critical value $C_e(J,\sigma)$ from Lemma~\ref{lem: critical formula}. 
Define the map 
\begin{equation}\label{eq: phidef}
\Phi \text{ by } (J,\sigma,\sigma') \mapsto (J,\sigma,\sigma',\{C_e(J,\sigma)\}_e,\{C_e(J,\sigma')\}_e)\ ,
\end{equation}
where the last two coordinates are the collections of critical values of all edges. 
(This map is only defined for $\sigma,\sigma' \in \mathcal{G}(J)$ but this does not create a problem because the support of $\mu_J \times \mu_J$ is equal to $\mathcal{G}(J) \times \mathcal{G}(J)$.) Let $M^*$ be the push-forward of $M$ by $\Phi$ on the space 
\begin{equation}
\label{eqn: Omega*}
\Omega^* := \R^{E_H} \times \{-1,+1\}^{V_H} \times \{-1,+1\}^{V_H} \times \mathbb{R}^{E_H} \times \mathbb{R}^{E_H}\ .
\end{equation}
Sampling from $M^*$ amounts to obtaining a configuration
\[
\omega = (J,\sigma,\sigma',\{C_e\}_e,\{C_e'\}_e) \in \Omega^*\ .
\]
We have not indicated the dependence of $C_e$ on $\sigma$ and $J$, for example, because on $\Omega^*$, it is no longer a function of the other variables. 
Note that the marginal of $M^*$ on $(J,\sigma,\sigma')$ is $M$. 

We now construct a translation-invariant measure $\widetilde{M}$ on 
\[
\widetilde \Omega = \R^{E_{\Z^2}} \times \{-1,+1\}^{\Z^2} \times \{-1,+1\}^{\Z^2} \times \R^{E_{\Z^2}} \times \R^{E_{Z^2}}\ .
\]
from the measure $M^*$ using a standard procedure.
An event in $\widetilde \Omega$ that only involves, in a measurable way, a finite number of vertices of $\Z^2$ in $\sigma$ and $\sigma'$, and a finite number of edges through the couplings $J_e$ and the critical values $C_e$ and $C_e'$ will be called a {\it cylinder event}. 
Let $T$ be the translation of $\mathbb{Z}^2$ that maps the origin to the point $(0,-1)$ and for each $n \geq 0$ define
\begin{equation}
\label{eqn: M*}
M^*_n = \frac{1}{n+1} \sum_{k=0}^n T^k M^*\ .
\end{equation}
Note that the translated measure $T^k M^*$ is well-defined on cylinder events for $k$ large enough. (If it is not defined, we can take it to be zero without affecting the limit below.)
Moreover, the sequence of measures $M^*_n$ is tight. 
This is obvious for the marginal on $(J,\sigma,\sigma')$. The fact that it holds also when including the critical values is a direct consequence of Corollary~\ref{cor: bound}.
Therefore there exists a sequence $(n_k)$ such that $M^*_{n_k}$ converges as $k \to \infty$, in the sense of finite-dimensional distributions, to a translation invariant measure on $\widetilde{\Omega}$. 
Call this limiting measure $\widetilde M$. 
The weak convergence of the measures $M^*_n$ to $\widetilde M$ implies that for any event $B$ in $\widetilde{\Omega}$ 
\begin{equation}
\label{eqn: convergence sets}
\begin{aligned}
\liminf_{n\to\infty} M^*_n(B)&\geq \widetilde M(B) \text{ if $B$ is open;}\\
\limsup_{n\to\infty} M^*_n(B)&\leq \widetilde M(B) \text{ if $B$ is closed;}\\
\lim_{n \to \infty} M^*_n(B)&=\widetilde M(B) \text{ if } \widetilde M(\partial B)=0\ .
\end{aligned}
\end{equation}
(See, for example, Theorem 4.25 of \cite{Kallenberg}.)
Here we are using the fact that $\widetilde \Omega$ is metrizable, as these statements are true in general for probability measures on metric spaces.
The boundary $\partial B$ is the closure of $B$ minus its interior in $\widetilde \Omega$ (not to be confused with $\partial A$ for $A$ a finite set of vertices in the graph).
Examples of open (resp. closed) cylinder sets are $\{h(J,\{C_e\}_e,\{C_e'\}_e)\in O\}$ where $h$ is a continuous function depending only
on a finite number of edges, and $O$ is an open (resp. closed) set of $\R$. 

\begin{rem}\label{rem: rabbit}
Note that if $B$ only depends on the spins of a finite number of vertices and not on the couplings and critical values, 
actual convergence of the probability holds, since $B$ is open and closed thus $\partial B=\varnothing$. 
This same conclusion is true if $B$ is an event of the form $\{(\sigma,\sigma') \in D, J \in I\}$ for events $D$ that depend on finitely many spins and sets $I$ in some finite dimensional Euclidean space with boundary of zero Lebesgue measure. 
Indeed, it is a general fact that for any two events $B$ and $B'$, $\partial (B\cap B')\subseteq \partial B \cup \partial B'$; therefore $\partial B\subseteq \partial \{(\sigma,\sigma') \in D\}\cup \partial\{J \in I\}$. It follows that the set $\partial B$ has $\widetilde M$-measure zero, since $\widetilde M(\partial\{J \in I\})=\nu(\partial\{J \in I\})=0$ (by the continuity of $\nu$) and $\widetilde M( \partial\{(\sigma,\sigma') \in D\})=\widetilde M(\varnothing)=0$.
\end{rem}

Since $\widetilde M$ will be our object of study for the remainder of the paper, we will spend some time explaining its basic properties. Suppose $\omega=(J,\sigma,\sigma',\{C_e\}_e,\{C_e'\}_e)$ is sampled from $\widetilde M$.
First, it follows directly from the construction that $\sigma$ and $\sigma'$ are almost-surely ground states on $\mathbb{Z}^2$.
Also if we define $F_e=|J_e-C_e|$ and $F_e'=|J_e-C_e'|$ to be the {\it flexibility of the edge $e$} in $\sigma$ and in $\sigma'$, then for any finite set $A$ with $e\in \partial A$,
\begin{equation}\label{eq: tacos}
F_e\leq \sum_{\{x,y\} \in \partial A} J_{xy} \sigma_x \sigma_y\  ~\widetilde M\text{-a.s.}
\end{equation}
 and similarly for $F_e'$.
This is true because this relation holds with $M^*$-probability one on the space $\Omega^*$ and for its translates by $k$ (for $k$ large enough that $A \subseteq T^kV_H$) by \eqref{eqn: flex}.
Moreover, both sides are continuous functions of $\omega$. Thus the $\omega$'s satisfying the relation \eqref{eq: tacos} form a closed set.
Equation \eqref{eq: tacos} then follows from \eqref{eqn: convergence sets}. It remains to take the infimum over all (countably many) finite sets $A$ to conclude the following lemma.
\begin{lem}
\label{lem: flex ineq}
Let $I_e(J,\sigma):=\inf_{\substack{A:e \in \partial A \\ A \text{ finite}}}\sum_{\{x,y\} \in \partial A} J_{xy} \sigma_x \sigma_y$. For any edge $e$,
\[
\widetilde M\{F_e \leq I_e(J,\sigma)\}= 1\ .
\]
The corresponding statement holds for $\sigma'$.
\end{lem}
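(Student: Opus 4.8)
The plan is to establish the pointwise inequality $F_e \le \sum_{\{x,y\}\in\partial A} J_{xy}\sigma_x\sigma_y$ simultaneously for every finite set $A$ with $e\in\partial A$, and then take the infimum over the countably many such $A$. The key observation, already recorded in the text just before the statement, is that for each \emph{fixed} finite $A$ the set of configurations $\omega=(J,\sigma,\sigma',\{C_f\}_f,\{C_f'\}_f)\in\widetilde\Omega$ satisfying \eqref{eq: tacos} is closed in $\widetilde\Omega$, because both $F_e=|J_e-C_e|$ and $\sum_{\{x,y\}\in\partial A}J_{xy}\sigma_x\sigma_y$ are continuous functions of $\omega$ (they depend only on finitely many coordinates), and a non-strict inequality between continuous functions defines a closed set.

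First I would fix a finite set $A$ with $e\in\partial A$ and show $M^*_n\{F_e\le\sum_{\{x,y\}\in\partial A}J_{xy}\sigma_x\sigma_y\}=1$ for all $n$ large enough. On $\Omega^*$ this holds with $M^*$-probability one: by \eqref{eqn: flex}, $F_e(\sigma)=\inf_{A':e\in\partial A'}\sum_{\{x,y\}\in\partial A'}J_{xy}\sigma_x\sigma_y$, so in particular $F_e\le\sum_{\{x,y\}\in\partial A}J_{xy}\sigma_x\sigma_y$, and $\Phi$ records $C_e$ (hence $F_e$) faithfully. The same holds for each translate $T^kM^*$ provided $k$ is large enough that $A\subseteq T^kV_H$ (for smaller $k$ the translate is defined to be zero and contributes nothing). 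Averaging, $M^*_n$ gives full mass to the closed set in question once $n$ is large.

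Next I would pass to the limit. Since the set $B_A:=\{F_e\le\sum_{\{x,y\}\in\partial A}J_{xy}\sigma_x\sigma_y\}$ is closed, the second line of \eqref{eqn: convergence sets} gives $\widetilde M(B_A)\ge\limsup_n M^*_n(B_A)=1$, so $\widetilde M(B_A)=1$. Finally, there are only countably many finite vertex sets $A$ with $e\in\partial A$, and $\{F_e\le I_e(J,\sigma)\}=\bigcap_A B_A$ since $I_e(J,\sigma)$ is by definition the infimum of $\sum_{\{x,y\}\in\partial A}J_{xy}\sigma_x\sigma_y$ over exactly these $A$. A countable intersection of full-measure sets has full measure, giving $\widetilde M\{F_e\le I_e(J,\sigma)\}=1$. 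The argument for $\sigma'$ and $F_e'$ is identical, using $C_e'$ in place of $C_e$.

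I do not expect any serious obstacle here; the statement is essentially a bookkeeping consequence of the weak-convergence properties \eqref{eqn: convergence sets} together with the fact that \eqref{eqn: flex} holds exactly on the original space. The only point requiring a little care is the justification that closedness of $B_A$ in $\widetilde\Omega$ follows from continuity of the relevant functions in the product topology — this is where it matters that each function depends on only finitely many coordinates — and the bookkeeping that $T^kM^*$ can be ignored for the finitely many small $k$ where it is not yet defined, which does not affect the $n\to\infty$ average.
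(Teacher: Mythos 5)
Your proof is correct and follows exactly the same route as the paper's: fix a finite $A$ with $e\in\partial A$, observe that the inequality holds $M^*$-a.s.\ (and for translates) by \eqref{eqn: flex}, note that the event is closed since both sides are continuous in $\omega$, pass to the limit via \eqref{eqn: convergence sets}, and then intersect over the countably many such $A$. Nothing to add.
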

In other words, flexibilities produced by the weak limit procedure from half-planes are no bigger than the ones computed directly from \eqref{eqn: flex} in the full plane.
This is to be expected since the former also take into account sets $A$ that touch the boundaries of some translated half-planes. The last basic property we need is a
result analogous to Proposition~\ref{prop: decoupling} (specifically the consequence of that proposition that $M(C_e=J_e)=0$) for the weak limit $\widetilde{M}$.

\begin{lem}\label{lem: flexnonzero}
For any edge $e$,
\[
\widetilde M\{F_e = 0\} = 0\ .
\]
The corresponding statement holds for $F_e'$.
\end{lem}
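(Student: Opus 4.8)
The plan is to transfer the property $M(C_e = J_e) = 0$ — which holds on the half-plane by Proposition~\ref{prop: decoupling} — to the limit $\widetilde{M}$, using the monotonicity estimates of Section~\ref{sect: monotonicity} to get uniform control that survives the weak limit. The difficulty is that $\{F_e = 0\} = \{C_e = J_e\}$ is neither open nor closed as a subset of $\widetilde{\Omega}$ (it is the preimage of a single point), so the soft convergence statements in \eqref{eqn: convergence sets} do not apply directly; one must instead sandwich this event between cylinder events whose $\widetilde{M}$-measure can be bounded.

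First I would fix a small $\delta > 0$ and consider the closed cylinder event $B_\delta = \{|J_e - C_e| \leq \delta\}$, which is genuinely closed in $\widetilde{\Omega}$ because $J_e$ and $C_e$ are now coordinates (not nonlocal functions). By the second line of \eqref{eqn: convergence sets}, $\widetilde{M}(B_\delta) \geq \limsup_{n} M^*_n(B_\delta)$ is the wrong direction; instead I use that $B_\delta$ is closed to get $\widetilde{M}(B_\delta) \leq \liminf$ is also wrong. The correct move: decompose $B_\delta$ according to the sign $\sigma_e = \pm 1$, and on $\{\sigma_e = +1\}$ note $B_\delta \cap \{\sigma_e=+1\} = \{C_e \leq J_e \leq C_e + \delta,\ \sigma_e=+1\}$ (using $J_e \geq C_e$ from \eqref{eqn: equiv}, which holds $\widetilde{M}$-a.s. since it is a closed relation passing to the limit, as in the derivation of Lemma~\ref{lem: flex ineq}). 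Then I want a bound of the form $M^*_n(B_\delta) \leq g(\delta)$ with $g(\delta) \to 0$ as $\delta \to 0$, uniformly in $n$; since $M^*_n$ is an average of translates of $M^*$ and translation does not affect the event (it is a single-edge cylinder event, relabeled), it suffices to bound $M^*(|J_e - C_e| \leq \delta)$, i.e. $M(|J_e - C_e(J,\sigma)| \leq \delta) = M(F_e \leq \delta)$, uniformly in $\delta$ with a rate $\to 0$.

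The key step, then, is the quantitative bound $M(F_e \le \delta) \le g(\delta)$ with $g(\delta)\to 0$. The natural approach is Lemma~\ref{lem: backmodify} run in reverse, or a direct decoupling argument in the spirit of the proof of Proposition~\ref{prop: decoupling}: condition on all couplings $J_{\{e\}^c}$; then $C_e(J,\sigma)$ depends on $\sigma$ but, crucially, not on $J_e$ (Lemma~\ref{lem: critical formula}), and summing over the finitely many ground states there are only finitely many possible values $c_1,\dots,c_k$ for $C_e$ among configurations $\sigma \in \mathcal{G}(J)$; the event $\{F_e \le \delta\}$ forces $J_e$ into one of finitely many intervals of length $2\delta$, and since each such $c_i$ is itself measurable with respect to $J_{\{e\}^c}$ only, $\nu(dJ_e)$ of the relevant set is at most $k \cdot \sup_{c}\nu([c-\delta, c+\delta])$. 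The genuine obstacle here is that $k = |\mathcal{G}(J)|$ could a priori be controlled (it is a.s. constant and finite by assumption), so $k$ is a fixed finite constant $N$, and then by continuity of $\nu$, $\sup_c \nu([c-\delta,c+\delta]) \to 0$ as $\delta \to 0$ (using that $\nu$ has no atoms and, if needed, the support condition to handle the $\sup$ over all $c \in \mathbb{R}$ via tightness — couplings with very large $|c|$ are excluded by Corollary~\ref{cor: bound}, which bounds $|C_e| \leq \mathcal{S}_e$, and $\mathcal{S}_e$ has a distribution with no atom at $\infty$). Taking $g(\delta) = N \sup_{|c|\le R}\nu([c-\delta,c+\delta]) + \nu(\mathcal{S}_e > R)$ and letting first $\delta \to 0$ then $R \to \infty$ gives $g(\delta) \to 0$.

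Finally, assembling: for each $\delta$, since $B_\delta$ is closed, \eqref{eqn: convergence sets} gives $\widetilde{M}(B_\delta) \geq \limsup_n M^*_n(B_\delta)$? No — closed sets give $\limsup_n M^*_n(B_\delta) \leq \widetilde{M}(B_\delta)$, the wrong inequality. So instead I take $B_\delta^{\circ} = \{|J_e - C_e| < \delta\}$, an open set; \eqref{eqn: convergence sets} gives $\widetilde{M}(B_\delta^{\circ}) \leq \liminf_n M^*_n(B_\delta^{\circ}) \leq \sup_n M^*_n(B_\delta^{\circ}) \leq g(\delta)$ by the uniform bound above. Then $\{F_e = 0\} = \bigcap_{\delta > 0} B_\delta^{\circ}$, so $\widetilde{M}\{F_e = 0\} \leq \widetilde{M}(B_\delta^{\circ}) \leq g(\delta)$ for every $\delta > 0$; letting $\delta \to 0$ yields $\widetilde{M}\{F_e = 0\} = 0$. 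The statement for $F_e'$ is identical, with $\sigma'$ in place of $\sigma$. I expect the main obstacle to be making the uniform-in-$n$ bound $M^*_n(B_\delta^{\circ}) \leq g(\delta)$ rigorous — specifically, verifying that the decoupling argument for $M(F_e \leq \delta)$ is genuinely uniform over the edge $e$ (so that it survives the translation average defining $M^*_n$) and handling the $\sup$ over unbounded critical values via the super-satisfied bound of Corollary~\ref{cor: bound}.
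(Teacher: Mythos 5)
Your proposal follows the same overall skeleton as the paper (write $\{F_e=0\}$ as a decreasing intersection of open cylinder events $\{|J_e-C_e|<\delta\}$, get a uniform-in-translate bound $T^kM(|J_e-C_e|<\delta)\le g(\delta)$ with $g(\delta)\to 0$, pass to the limit via \eqref{eqn: convergence sets}), but you replace the paper's \emph{key estimate} by a genuinely different one. The paper decouples $J_e$ from $\mu_J$ by the monotonicity trick: it bounds $\mu_J\{|J_e-C_e|<\e,\sigma_e=+1\}$ by the average of $\mu_{J(e,s)}\{\cdots\}$ over $s\ge J_e$, at the cost of a cutoff $|J_e|<N$ and the factor $1/\nu(N,\infty)$; after Fubini the inner $\nu(dJ_e)$-integral is over an interval of length $2\e$ and hence small. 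You instead try to bound $M(F_e\le\delta)$ directly by a \emph{counting} argument: fix $J_{\{e\}^c}$, observe $C_e(J,\sigma)$ is independent of $J_e$, and claim $\{F_e\le\delta\}$ pins $J_e$ into at most $N$ intervals of length $2\delta$, where $N=|\mathcal{G}(J)|$.

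There is a genuine gap in this counting step, though it is fixable. You write ``there are only finitely many possible values $c_1,\dots,c_k$ for $C_e$ among configurations $\sigma\in\mathcal{G}(J)$,'' treating this list as fixed once $J_{\{e\}^c}$ is fixed. But $\mathcal{G}(J)$ \emph{depends on $J_e$} (Lemma~\ref{lem: monotone}), which is precisely the variable you are integrating out. As $J_e$ sweeps through $\mathbb{R}$, ground states enter and leave $\mathcal{G}(J)$ — and a priori the union, over all $J_e$, of the sets $\{C_e(\sigma):\sigma\in\mathcal{G}(J(e,J_e))\}$ could be infinite even though each cross-section has size $N$. The argument as written therefore does not justify the bound $k\cdot\sup_c\nu([c-\delta,c+\delta])$. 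To repair it, note that $\mathcal{G}(J(e,y))\subseteq\mathcal{G}_{+_e}\cup\mathcal{G}_{-_e}$ for every $y$ (the latter sets do not depend on $J_e$), and by Corollary~\ref{cor: supersat3} together with the a.s. constancy of $|\mathcal{G}(J)|$, one has $|\mathcal{G}_{+_e}|=|\mathcal{G}_{-_e}|=N$ for a.e. $J_{\{e\}^c}$ (take $J_e$ larger/smaller than $\pm\mathcal{S}_e$). Hence the pool of candidate ground states — and therefore of candidate critical values — has size at most $2N$ and is fixed as $J_e$ varies; this is exactly what makes the Fubini-and-count step legitimate. Once this observation is in place your route is correct, and it is arguably cleaner than the paper's in that it avoids the cutoff $|J_e|<N$ and the factor $1/\nu(N,\infty)$; the trade-off is that it uses finiteness of $\mathcal{G}_{\pm_e}$ essentially, whereas the paper's monotonicity argument uses only the monotonicity of the family $(\mu_J)$.

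Two smaller remarks. First, the truncation $\sup_{|c|\le R}\nu([c-\delta,c+\delta])+\nu(\mathcal{S}_e>R)$ is unnecessary: any continuous distribution function on $\mathbb{R}$ is uniformly continuous, so $\sup_{c\in\mathbb{R}}\nu([c-\delta,c+\delta])\to 0$ as $\delta\to 0$ with no boundedness of $c$ required — this is exactly what the paper invokes. Second, the opening paragraph's back-and-forth about open versus closed events is harmless but should be trimmed; the correct reduction is the one you land on, namely $\widetilde M(B_\delta^\circ)\le\liminf_n M_n^*(B_\delta^\circ)$ from the first line of \eqref{eqn: convergence sets}, and $\{F_e=0\}=\bigcap_\delta B_\delta^\circ$.
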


\begin{proof}
It suffices to prove the statement for $F_e$. 
Because $\{F_e=0\}$ is not an open set, we cannot simply take limits in Proposition~\ref{prop: decoupling} to obtain the result.
Consider the cylinder event $\{|J_e - C_e| < \e, |J_e|<N\}$ for $\e>0$ and $N>0$.
Note that this set is open. (The cutoff in $J_e$ seems superfluous first but is useful in the estimate below.)
The conclusion will follow from \eqref{eqn: convergence sets} once we show that for each fixed $N$,
\begin{equation}\label{eq: limuniform}
T^k M\{|J_e - C_e(J,\sigma)|<\e,~|J_e|< N\}
\end{equation}
can be made arbitrarily small uniformly in $k$ (for $k$ such that $e \in T^k E_H$) by taking $\e$ small.

We prove the estimate for $k=0$ only. 
It will be clear that the same proof holds for any $k$.
Using the monotonicity \eqref{eqn: increasing} and the notation $J(e,s)$ of \eqref{eqn: J(e,s)}, we have
\begin{eqnarray*}
&&M(|J_e - C_e(J,\sigma)|<\e,~ J_e< N,~ \sigma_e=+1) \\
&=&\int \nu(dJ_{\{e\}^c}) \int_{-\infty}^{N} \nu(dJ_e) \frac{1}{\nu(J_e,\infty)} \int_{J_e}^\infty \nu(ds) ~\mu_J \{\sigma: |J_e-C_e(J,\sigma)| < \e,~ \sigma_e = +1\} \\
&\leq& \frac{1}{\nu(N,\infty)} \int \nu(dJ_{\{e\}^c}) \int \nu(dJ_e) \int \nu(ds) ~\mu_{J(e,s)} \{\sigma: |J_e-C_e(J,\sigma)| < \e,~ \sigma_e = +1\} \\
\end{eqnarray*}
We now exchange integrals using Fubini and integrate over $J_e$ first to get the upper bound
$$
\frac{1}{\nu(N,\infty)} \int \nu(dJ_{\{e\}^c}) \int \nu(ds)\int \mu_{J(e,s)}(d\sigma) \nu\{J_e: |J_e - C_e(J,\sigma)| < \e\}\ .
$$
Recall that $C_e(J,\sigma)$ does not depend on $J_e$. 
The interval $\{J_e: |J_e - C_e(J,\sigma)| < \e\}$ has length $2\e$, hence given $\delta>0$, its $\nu$-probability
can be made smaller than $\delta$, independently of  $C_e(J,\sigma)$, by the continuity of $\nu$.
We have thus shown
$$
M(|J_e - C_e(J,\sigma)|<\e,~ J_e< N,~ \sigma_e=+1)\leq \frac{\delta}{\nu(N,\infty)}
$$
Repeating the same proof, but using monotonicity in the other direction and taking $J_e > -N$,
\[
M(|J_e-C_e(J,\sigma)|< \e,~ J_e > -N,~\sigma_e=-1) \leq \frac{\delta}{\nu(N,\infty)}\ .
\]
This estimate holds for any $k$ and \eqref{eq: limuniform} can be made uniformly small by taking $\e$ small.
\end{proof}

\subsection{Non-connectivity of the interface}
\label{sec: disconnected}
In this section we show
\begin{prop}
\label{prop: not connected} 
If  \eqref{eqn: assumption} holds, then 
$
\widetilde M\{\sigma \Delta \sigma' \text{ is not connected}\} > 0\ .
$
\end{prop}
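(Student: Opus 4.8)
The plan is to exploit the asymmetry between the half-plane and the full plane: interfaces on the half-plane are typically \emph{tethered} to the $x$-axis, and in the weak limit $\widetilde M$ this tethering manifests as interface pieces that still ``want'' to reach down to a now-absent boundary. Concretely, I would start from the assumption \eqref{eqn: assumption}, which by the translation-invariance of $M$ and Proposition~\ref{prop: touch} tells us that a fixed edge $e_0$ lies in $\sigma \Delta \sigma'$ with positive $M$-probability, hence (since ``$e_0 \in \sigma\Delta\sigma'$'' is a cylinder event depending only on finitely many spins) with positive $\widetilde M$-probability by Remark~\ref{rem: rabbit}. So $\widetilde M\{\sigma \Delta \sigma' \neq \emptyset\} > 0$. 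The goal is then to show that, conditionally on the interface being nonempty, there is positive probability it fails to be a single connected dual path.

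The key mechanism I would use is a \emph{mass-transport / counting} argument on domain walls. In the half-plane, every domain wall in $\sigma\Delta\sigma'$ is either tethered (crosses the $x$-axis) or is a finite excursion; but a finite domain wall disjoint from the $x$-axis would force a bounded region on which $\sigma$ and $-\sigma'$ (say) agree only on a finite set, which is not automatically forbidden --- however, a domain wall with \emph{both} endpoints escaping to infinity \emph{along the boundary} is what tethering produces. After averaging over the $n+1$ vertical translates in \eqref{eqn: M*} and passing to the limit, the point $e_0$ sees, with probability bounded below uniformly in $n$, a domain wall whose lowest point is at height $O(1)$ and which --- in the pre-limit half-plane picture at translation level $k$ --- is attached to the boundary at height $-k$. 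In the limit there is no boundary, so this piece of interface must either become a bi-infinite path or terminate; since interfaces have no dangling ends (Lemma~\ref{lem: parity}) and no loops, it must be bi-infinite. The plan is to show that with positive probability $\widetilde M$ produces \emph{two} such boundary-tethered excursions near $e_0$ that, in the limit, are forced to be distinct connected components --- for instance by finding, with positive $\widetilde M$-probability, two edges $e_0$ and $e_1$ both in $\sigma\Delta\sigma'$ together with a ``separating'' ray of super-satisfied edges (using Lemma~\ref{lem: cylinder} and Corollary~\ref{cor: supersat2}, whose hypotheses are cylinder events with $\widetilde M$-boundary zero) that no domain wall can cross, so that the components through $e_0$ and $e_1$ cannot be joined.

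More precisely, the step I would carry out is: (i) use Proposition~\ref{prop: touch} plus translation-invariance to get a positive-$\widetilde M$-probability cylinder event $E$ on which a finite configuration of edges near the origin contains at least two edges of $\sigma\Delta\sigma'$ lying in different ``columns''; (ii) on $E$, insert by a coupling-modification (Lemma~\ref{lem: SStypemod}, whose monotone-event and explicit-lower-bound form survives the weak limit because the resulting event is open) a vertical line of edges made super-satisfied with fixed sign, so that by Lemma~\ref{lem: parity} / the no-odd-crossing fact these interface edges lie on opposite sides and no domain wall of $\sigma\Delta\sigma'$ crosses the line; (iii) conclude that the components of $\sigma\Delta\sigma'$ through the two marked edges are disjoint, hence $\sigma\Delta\sigma'$ is disconnected, on an event of positive $\widetilde M$-probability.

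\textbf{Main obstacle.} The delicate point is step (ii)--(iii): I need the ``super-satisfied separating line'' event to (a) have positive $\widetilde M$-probability --- this requires that it be expressible via open cylinder events and an explicit lower bound surviving $M^*_n \to \widetilde M$, which is exactly what Lemmas~\ref{lem: SStypemod} and \ref{lem: backmodify} were built to supply, but one must check the events are of the required monotone type --- and (b) genuinely \emph{disconnect} the interface rather than merely pin it, i.e.\ I must rule out a domain wall going ``around'' the line. On the half-plane ``around'' is impossible only because the line can be run down to the boundary; transporting this to $\widetilde M$ (full plane, no boundary) is precisely where the half-plane hypothesis is essential, and getting the line long enough --- with a probability lower bound uniform in the translation index $k$ so it passes to the limit --- is the crux. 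I expect the bulk of the real work to be in making this uniform lower bound precise, presumably by choosing the line length as a parameter and using the product structure of $\nu$ together with Corollary~\ref{cor: bound} to bound the cost of super-satisfying it independently of $k$.
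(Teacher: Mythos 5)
Your proposal correctly handles the easy preliminary (showing $\widetilde M\{\sigma\Delta\sigma'\neq\emptyset\}>0$ via a cylinder event and Remark~\ref{rem: rabbit}), and the heuristic ``mass-transport/counting on domain walls'' instinct is the right one. However, the mechanism you actually propose for step (ii)--(iii)---inserting a vertical line of super-satisfied edges as a hard separator between two interface edges---is not what the paper does, and as stated it cannot be made to work. A finite super-satisfied line segment of length $L$ only prevents the interface from crossing it within that segment; in the full-plane measure $\widetilde M$ the two domain walls you are trying to keep apart are bi-infinite paths, and nothing stops them from joining far above or far below the segment. You recognize this yourself, but the fix you gesture at---taking $L$ large enough to reach the (translated) boundary at height $-k$---requires $L\sim k$, and the probability of super-satisfying $\sim k$ i.i.d.\ couplings decays geometrically in $k$, so no lower bound uniform in the translation index survives the weak limit. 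That uniformity is not a technicality to be filled in; it is precisely what kills this route.

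The paper instead goes through a density estimate. First (the paper's unnamed lemma right after Proposition~\ref{prop: not connected}) it shows that a tethered domain wall crosses the $x$-axis exactly once $M$-a.s.\ (two crossings would enclose a finite region on which the summed coupling energy along the boundary vanishes, a probability-zero event by continuity of $\nu$), so by horizontal translation-invariance $\sigma\Delta\sigma'$ has infinitely many domain walls with positive $M$-probability. Then Lemma~\ref{lem: tethered} (quoting ADNS10 Prop.~3.4) upgrades this to a \emph{linear} lower bound $\E_M N_{n,k}\ge cn$ on the expected number of \emph{distinct} tethered domain walls crossing the horizontal segment $[-n,n]\times\{k\}$, uniformly in the height $k$, via subadditivity. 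Because these tethered walls are already distinct components in every pre-limit half-plane and because ``two crossings of a segment not connected inside a finite box'' is a cylinder event involving only spins, the linear count passes to $\widetilde M$ by Remark~\ref{rem: rabbit}, yielding at least two components with positive probability. This is the key lemma your proposal is missing: distinctness is preserved through the limit not by an inserted separator but by a quantitative crossing count expressed through finite-box cylinder events. I'd recommend reworking steps (ii)--(iii) around that estimate rather than around super-satisfied lines.

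Two smaller gaps in step (i): Proposition~\ref{prop: touch} gives that each fixed edge lies in the interface with positive probability, but it does not by itself give two edges in the interface \emph{simultaneously}, nor that they lie in \emph{different} domain walls; both need the tethered-wall structure above.
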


The first key ingredient is to show that with positive $M$-probability, there are infinitely many
tethered domain walls in the interface on the half-plane.
\begin{lem}
If \eqref{eqn: assumption} holds, then
with positive $M$-probability, $\sigma\Delta\sigma'$ crosses the $x$-axis. Moreover, with positive $M$-probability, $\sigma\Delta\sigma'$ has infinitely many domain walls.
\end{lem}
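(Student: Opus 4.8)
The plan is to derive both claims from Proposition~\ref{prop: touch} (which says that if some edge lies in $\sigma\Delta\sigma'$ with positive probability, then so does every edge) together with the horizontal translation invariance of $M$ (Lemma~\ref{lem: covariance}). For the first claim, decompose $\{\sigma\Delta\sigma'\neq\emptyset\}=\bigcup_{e\in E}\{e\in\sigma\Delta\sigma'\}$; by \eqref{eqn: assumption} and countable additivity some edge $e$ has $M\{e\in\sigma\Delta\sigma'\}>0$, and Proposition~\ref{prop: touch} then gives $M\{b\in\sigma\Delta\sigma'\}>0$ for \emph{every} edge $b$. Choosing $b=b_0$ to be an edge whose dual edge crosses the $x$-axis (say a horizontal edge of the bottom row of $H=\Z\times\N$), the event $\{b_0\in\sigma\Delta\sigma'\}$ is contained in $\{\sigma\Delta\sigma'\text{ crosses the }x\text{-axis}\}$, which therefore has positive $M$-probability; equivalently, $\sigma\Delta\sigma'$ contains a tethered domain wall with positive probability. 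Set $p:=M\{b_0\in\sigma\Delta\sigma'\}>0$.

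For the second claim, let $T$ be the unit horizontal translation of $H$ and $b_n := T^n b_0$. By Lemma~\ref{lem: covariance}, $M$ is $T$-invariant, so $M\{b_n\in\sigma\Delta\sigma'\}=p$ for all $n\geq 0$. The events $E_N:=\{(J,\sigma,\sigma'):b_n\in\sigma\Delta\sigma'\text{ for some }n\geq N\}$ decrease in $N$ and satisfy $E_N\supseteq\{b_N\in\sigma\Delta\sigma'\}$, hence $M(E_N)\geq p$ for all $N$ and $M\big(\bigcap_N E_N\big)=\lim_N M(E_N)\geq p>0$. On $\bigcap_N E_N$, infinitely many distinct bottom-row edges belong to $\sigma\Delta\sigma'$.

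It remains to pass from ``infinitely many bottom-row edges in the interface'' to ``infinitely many domain walls''. For this I would invoke the structural properties of the interface on the half-plane recalled in the preliminaries above: viewed in the dual lattice, $\sigma\Delta\sigma'$ has no loops and no interior dangling ends, so each domain wall is a self-avoiding dual path whose at most two endpoints lie either at infinity or on the bottom boundary, and each bottom-row edge of $\sigma\Delta\sigma'$ supplies exactly one such boundary endpoint. Hence the number of bottom-row edges in $\sigma\Delta\sigma'$ is at most twice the number of (tethered) domain walls, so on $\bigcap_N E_N$ the interface has infinitely many domain walls, and this event has $M$-probability at least $p>0$. The only delicate point is this last step — making precise that the free bottom boundary acts as an absorbing end for dual paths, so that a single domain wall meets the bottom row in only boundedly many edges; the probabilistic content of the lemma is the short argument of the first two paragraphs.
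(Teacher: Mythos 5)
Your probabilistic skeleton is sound and matches the paper's: invoke Proposition~\ref{prop: touch} to put a bottom-row edge in the interface with positive probability, then use horizontal translation invariance of $M$ (Lemma~\ref{lem: covariance}) to upgrade ``one bottom-row edge'' to ``infinitely many bottom-row edges'' with positive probability. The nested-events argument via $\bigcap_N E_N$ is a clean way to carry this out.

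The gap is in the geometric step you flag at the end, and it is a genuine one, not just a matter of polish. You assert that ``no loops and no interior dangling ends'' forces each domain wall to be a self-avoiding dual path with at most two endpoints. That implication is false as stated: absence of loops makes a connected component a tree, and absence of degree-one interior vertices forbids leaves in the interior, but nothing at this stage rules out degree-four branching points. A tree with branching and leaves only on the bottom boundary (or at infinity) could in principle meet the $x$-axis arbitrarily many times, so your bound ``at most twice the number of tethered domain walls'' is not justified. The absence of branching is only proved later in the paper, and only for the limit measure $\widetilde M$ on $\Z^2$ (Proposition~\ref{prop: dwproperties}, part 3, via a Burton--Keane argument), not for $M$ on the half-plane where this lemma lives.

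The paper's route avoids this issue entirely and is the missing ingredient you need. If a single connected component of $\sigma\Delta\sigma'$ crossed the $x$-axis at two dual edges, then (being connected) it would contain a dual path between them, and that path together with the line $y=-1/2$ would enclose a finite set of primal vertices $S$. Every edge of $\partial S$ is then in $\sigma\Delta\sigma'$, so $\sigma_x\sigma_y=-\sigma'_x\sigma'_y$ on all of $\partial S$. But the ground state property gives $\sum_{\{x,y\}\in\partial S}J_{xy}\sigma_x\sigma_y\geq 0$ and $\sum_{\{x,y\}\in\partial S}J_{xy}\sigma'_x\sigma'_y\geq 0$; since these are negatives of each other they must both vanish, an event of $\nu$-probability zero by continuity. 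Hence almost surely each domain wall crosses the $x$-axis at most once, which (combined with your translation-invariance argument) gives infinitely many tethered domain walls. Note this argument makes no claim whatsoever about the local degree structure of the interface.
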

\begin{proof}
The first claim is a direct application of Proposition~\ref{prop: touch}. 
For the second, note that a connected component of $\sigma\Delta\sigma'$ cannot cross the $x$-axis twice.
If it did, it would contain a dual path whose union with the $x$-axis encloses a finite set of vertices $S$. We must have $\sum_{\{x,y\}\in \partial S}J_{xy}\sigma_x\sigma_y\geq 0$ and similarly in $\sigma'$ by \eqref{eq:GSproperty}.
Since $\sigma_x\sigma_y=-\sigma'_x\sigma'_y$ on $\partial S$, we conclude  $\sum_{\{x,y\}\in \partial S}J_{xy}\sigma_x\sigma_y=0$, and this has probability zero by the continuity of $\nu$. Therefore to each dual edge crossing the $x$-axis contained in $\sigma\Delta\sigma'$, there corresponds a unique connected 
component of $\sigma\Delta\sigma'$. By horizontal translation-invariance of $M$ (Lemma~\ref{lem: covariance}), if $\sigma\Delta\sigma'$ contains one such dual edge,
it must contain infinitely many. This gives the second claim.
\end{proof}

The next step is to prove that distinct connected components sampled from $M$ do not disappear 
after constructing $\widetilde M$. This is done by showing that the expected number of components intersecting a fixed box is uniformly bounded below in $k$. This is the content of the next lemma. We omit the proof; it
is exactly the same as that of \cite[Proposition~3.4]{ADNS10}.
For any $k \geq 0$ and $n \geq 1$, let 
\[
I_{n,k} = [-n,n] \times \{k\}
\]
and let $N_{n,k}$ be the number of distinct tethered domain walls that cross the line segment $I_{n,k}$.
Write $\E_M$ for the expectation with respect to $M$.
\begin{lem}
\label{lem: tethered}
For fixed $k \geq 0$, the sequence $(\E_{M} N_{n,k})_n$ is sub-additive. Therefore
\[
\lim_{n \to \infty} (1/n) \E_{ M} N_{n,k} \text{ exists }.
\]
Furthermore if \eqref{eqn: assumption} holds then there exists $c>0$ such that for all $k \geq 0$ and $n \geq 1$,
\[
\E_{M} N_{n,k} \geq cn\ .
\]
\end{lem}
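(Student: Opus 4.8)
The plan is to follow the structure of \cite[Proposition~3.4]{ADNS10}, adapting it to the uniform-measure setup. First I would establish subadditivity of $n \mapsto \E_M N_{n,k}$ for fixed $k$. The key observation is that a tethered domain wall crossing $I_{2n,k}$ must cross at least one of the two half-segments $[-2n,0]\times\{k\}$ and $[0,2n]\times\{k\}$ — more precisely, writing $I_{n,k}^L$ and $I_{n,k}^R$ for the left and right translates of $I_{n,k}$ that together tile $I_{2n,k}$, any tethered wall meeting $I_{2n,k}$ meets $I_{n,k}^L$ or $I_{n,k}^R$. Hence $N_{2n,k} \le N_{I_{n,k}^L} + N_{I_{n,k}^R}$ pointwise, where these count tethered walls crossing the respective segments. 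Taking $\E_M$ and using horizontal translation-invariance of $M$ (Lemma~\ref{lem: covariance}), both terms have expectation $\E_M N_{n,k}$, giving $\E_M N_{2n,k} \le 2\,\E_M N_{n,k}$; the general subadditive inequality $\E_M N_{m+n,k} \le \E_M N_{m,k} + \E_M N_{n,k}$ follows the same way. Fekete's lemma then yields existence of $\lim_n (1/n)\E_M N_{n,k}$.

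Next, for the lower bound, I would use assumption \eqref{eqn: assumption} together with the preceding lemma, which gives that with positive $M$-probability $\sigma \Delta \sigma'$ contains a dual edge crossing the $x$-axis — say $M(e^* \in \sigma\Delta\sigma',\ e \text{ crosses the }x\text{-axis}) =: p > 0$ for some fixed such edge $e$. As shown there, each such dual edge lies in a \emph{distinct} tethered domain wall, $M$-almost surely. Now for the segment $I_{n,0}$ there are of order $2n+1$ edges $e_1,\dots,e_{2n+1}$ crossing the $x$-axis within $[-n,n]$, and by horizontal translation-invariance each satisfies $M(e_i^* \in \sigma\Delta\sigma') = p$. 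Since $N_{n,0} \ge \#\{i : e_i^* \in \sigma\Delta\sigma'\}$ (distinct such edges give distinct tethered walls), linearity of expectation gives $\E_M N_{n,0} \ge (2n+1)p \ge cn$ with $c = 2p$. For general $k \ge 0$: the same argument applies verbatim to the horizontal line at height $k$ in place of the $x$-axis, \emph{provided} we know that with positive probability a domain wall crosses that line and that two crossings of it by one component has probability zero. The second fact has the same proof as in the cited lemma (a component crossing a horizontal line twice, together with the segment of that line between the crossings, encloses a finite set $S$ with $\sum_{\{x,y\}\in\partial S}J_{xy}\sigma_x\sigma_y = 0$, of probability zero by continuity of $\nu$). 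The first fact needs an argument specific to $k \ge 1$, since Proposition~\ref{prop: touch} directly gives only crossings of the $x$-axis.

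To handle arbitrary $k \ge 0$ uniformly, the cleanest route is: any tethered domain wall (one crossing the $x$-axis) is an infinite connected dual path with no loops and no dangling ends, hence it is unbounded and, being connected and crossing height $0$, it must either stay below height $k$ forever — impossible for a path that is unbounded and cannot revisit a bounded region indefinitely without a loop — or reach height $k$; more carefully, an infinite self-avoiding dual path through a point on the $x$-axis that never reaches height $k$ would be confined to the finite-width strip $\Z \times \{0,\dots,k-1\}$ only if it were finite, a contradiction, so it must cross the line at height $k$. Thus every tethered domain wall crosses height $k$, and by translation-invariance the $M$-probability that a fixed edge crossing height $k$ is in $\sigma\Delta\sigma'$ is again bounded below by a positive constant (indeed by the same $p$, up to the bookkeeping of which tethered walls reach which heights). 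Running the counting argument at height $k$ then gives $\E_M N_{n,k} \ge cn$ with $c>0$ \emph{independent of $k$}, as required. The main obstacle I anticipate is precisely this last point — making rigorous and uniform in $k$ the claim that a positive fraction of the edges crossing the line at height $k$ lie in the interface, i.e. transferring the positive-probability conclusion of Proposition~\ref{prop: touch} from the $x$-axis to every horizontal line with a $k$-independent constant; the topological fact that every tethered domain wall must cross every horizontal line above it (because it is an infinite loop-free path emanating from the boundary) is the tool that makes this work.
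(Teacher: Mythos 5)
Your subadditivity argument and the lower bound at $k=0$ are correct. The extension to general $k\geq 1$, however, has a genuine gap, and in fact two. First, the topological claim you rely on --- that an infinite self-avoiding dual path confined to $\Z\times\{0,\dots,k-1\}$ must be finite --- is false: that strip has finite \emph{height} but infinite horizontal extent, so an infinite self-avoiding dual path (say, one running indefinitely to the right at height $1/2$) lives comfortably inside it, and the claim that tethered domain walls must reach every height does not follow from the loop-free/no-dangling-end structure alone. Second, even if one knew that every tethered domain wall reaches height $k$, this still would not give a $k$-uniform constant: $M$ is invariant only under \emph{horizontal} translations, so the probability that a fixed edge at height $k$ lies in a tethered wall can a priori depend on $k$ and decay, and a tethered wall crossing $I_{n,0}$ might meet the line $y=k$ only far outside $[-n,n]$. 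Your parenthetical ``up to the bookkeeping of which tethered walls reach which heights'' is precisely where this gap sits, and it is not bookkeeping.

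The argument that closes the gap works at the level of the box $[-n,n]\times[0,k]$ rather than at the level of individual walls. Any tethered domain wall that crosses $I_{n,0}$ but not $I_{n,k}$ is an infinite connected dual set, so it must exit this box; it crosses the $x$-axis exactly once, it does not cross the top segment $I_{n,k}$, and there are no primal edges leaving the box downward, so it must cross one of the two vertical sides of the box, which together contain only $2k$ primal edges. Since distinct domain walls are edge-disjoint, at most $2k$ of the tethered walls crossing $I_{n,0}$ can fail to cross $I_{n,k}$, giving the pointwise inequality $N_{n,k}\geq N_{n,0}-2k$ and hence $\E_M N_{n,k}\geq 2pn-2k$, where $p>0$ is the constant from your $k=0$ argument. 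By itself this only controls the limit, $\lim_n(1/n)\E_M N_{n,k}\geq 2p$. But this is exactly where the subadditivity you proved --- and then did not use in the lower bound --- becomes essential: by Fekete's lemma the limit equals $\inf_n(1/n)\E_M N_{n,k}$, so in fact $\E_M N_{n,k}\geq 2pn$ for every $n\geq 1$ and every $k\geq 0$, which is the stated estimate with $c=2p$ independent of $k$.
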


\noindent
This lemma yields Proposition~\ref{prop: not connected}. We omit the proof as it is identical to \cite[Proposition~3.5]{ADNS10}. The proof there only deals with cylinder events involving only spins, and therefore limits go through using Remark~\ref{rem: rabbit}.

\subsection{The Newman-Stein technique}
\label{sec: connected}
In this section, we show 
\begin{prop}
\label{prop: connected}
$
\widetilde M(\sigma \Delta \sigma' \text{ is not connected}) =0 \ .
$
\end{prop}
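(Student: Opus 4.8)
The plan is to show that if $\sigma\Delta\sigma'$ sampled from $\widetilde M$ had a connected component that did not touch some large box, we could "gain flexibility" somewhere to derive a contradiction with Lemma~\ref{lem: flexnonzero}. This is the Newman--Stein argument from \cite{NS01}: a disconnected interface in the full plane $\Z^2$ gives rise, via a suitable surgery on one component, to a ground state with a zero-flexibility edge with positive probability.

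\textbf{Step 1: Set-up and the key dichotomy.} Suppose toward a contradiction that $\widetilde M(\sigma\Delta\sigma'\text{ disconnected})>0$. Then there is some finite box $\Lambda$ and a domain wall $W$ (a connected component of $\sigma\Delta\sigma'$) with positive probability such that $W$ is disjoint from some other part of the interface, and we can localize: there is an edge $e$ such that $e^*\in W$ and $W$ is "encircled" — more precisely, with positive $\widetilde M$-probability the component containing $e^*$ is finite or is separated from the rest of $\sigma\Delta\sigma'$ by an annulus of edges not in the interface. The crucial point, following \cite{NS01}, is to consider flipping all spins of $\sigma$ (or $\sigma'$) on one side of the dual path $W$ inside a region bounded away from the rest of the interface. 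Because $W$ is a component of $\sigma\Delta\sigma'$, flipping $\sigma$ across $W$ produces a configuration that agrees with $\sigma'$ on one side and $\sigma$ on the other.

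\textbf{Step 2: Using the flexibility inequality.} Here is where Lemma~\ref{lem: flex ineq} and Lemma~\ref{lem: flexnonzero} enter. The edges $e$ with $e^*\in W$ are exactly the edges where $\sigma_e\ne\sigma'_e$; by \eqref{eqn: equiv} and the definitions of $C_e,C_e'$, for such an edge one has $C_e\le J_e\le C_e'$ (or the reverse). The surgery on the component $W$ — flipping $\sigma$ on the finite region cut off by $W$ together with a segment — is exactly the operation of Proposition~\ref{prop: pair} applied along a critical droplet; since by Corollary~\ref{cor: droplet} the critical droplet is a.s.\ unique, one identifies $W$ (or a piece of it) with $\partial\Lambda$ for the relevant critical set. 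The Newman--Stein observation is that if $W$ is disconnected from the rest of the interface, then along $W$ one can find an edge $e$ for which the infimum defining $I_e(J,\sigma)$ in Lemma~\ref{lem: flex ineq} is attained (in the limit) by a droplet whose boundary is contained in $W$, forcing $F_e$ or $F_e'$ to be $0$ — because the "cost" of the dual path $W$ is zero when $W$ closes up using only interface edges (on which $\sigma_x\sigma_y=-\sigma'_x\sigma'_y$), so the alternating sum telescopes. Integrating over the positive-probability event and invoking Lemma~\ref{lem: flexnonzero} gives the contradiction.

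\textbf{Main obstacle.} The delicate part is making the "disconnected component $\Rightarrow$ zero flexibility" implication rigorous in the weak limit $\widetilde M$ rather than in a finite-volume or in $M$ directly: the event that a component is disconnected from the rest of the interface is not a cylinder event, so one must approximate it by cylinder events (component is disconnected \emph{within} a box $\Lambda_n$, with a circuit of non-interface edges around it), apply the surgery there, and control that the resulting flexibility is small with probability bounded below uniformly — then pass to the limit using \eqref{eqn: convergence sets} and Remark~\ref{rem: rabbit}. A secondary technical point is that the surgery changes $\sigma$ to a new configuration that must still be shown to be a ground state; this is precisely Proposition~\ref{prop: pair} (with the role of $e$ played by an interface edge in $W$), so the droplet-uniqueness from Corollary~\ref{cor: droplet} and the super-satisfied bounds from Corollary~\ref{cor: bound} are what keep everything finite-dimensional and hence stable under the weak limit. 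I expect the bulk of the work to be the bookkeeping that translates "$W$ is a self-contained domain wall" into "there is an interface edge whose critical droplet has boundary $\subseteq W$," and then the telescoping computation showing that the corresponding flexibility vanishes.
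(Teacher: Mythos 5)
Your high-level picture (a disconnected interface in $\mathbb{Z}^2$ forces a ground-state violation or a zero-flexibility edge, contradicting Lemma~\ref{lem: flexnonzero}) is the right starting point, and matches the spirit of the paper, but the core mechanism you propose does not work, and a major half of the argument is missing.

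\textbf{The telescoping step is wrong.} You claim that a disconnected domain wall $W$ ``closes up using only interface edges,'' so that the boundary cost of a droplet is zero by telescoping. In $\mathbb{Z}^2$ this cannot happen: by Proposition~\ref{prop: dwproperties}, each domain wall is a doubly infinite dual path with no loops, dangling ends, or branch points, and the interface contains no loops. Hence no finite set $A$ has $\partial A$ entirely inside a single domain wall, and no critical droplet boundary is ``contained in $W$.'' The circuit the paper actually constructs (in the proof of \eqref{eqn: 1}) is built from segments of \emph{two different} domain walls $D_0$, $D_1$ together with two disjoint \emph{rungs} $R_n,S_n$ that cross between them. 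On the interface segments the contributions from $\sigma$ and $\sigma'$ are opposite, so combining the two ground-state inequalities leaves only the rung contributions, which are bounded by $4\e$, not zero. Obtaining two such disjoint low-energy rungs in both directions along $D_0$ requires a Burton--Keane density argument (Lemma~\ref{lem: infinitelymanyedges}), not just a single choice.

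\textbf{The $I>0$ half is entirely missing.} The argument above only works when the infimum $I$ of rung energies off the central domain wall is $\leq 0$, which is what lets you make the circuit energy arbitrarily small. The paper introduces $I$ precisely to make this dichotomy, and the case $I>0$ (Section~\ref{sec: I=0}) is the technically dominant part of the proof: it uses a coupling modification (super-satisfying the path edges via Lemma~\ref{lem: SStypemod} and Lemma~\ref{lem: cylinder}, then lowering $J_f$ via Lemma~\ref{lem: backmodify}) to decrease the energy of the minimum-energy rung below all others, producing an event ($Y_\e$) that the Burton--Keane uniqueness argument (Proposition~\ref{prop: ye=0}) shows has probability zero. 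None of this is in your sketch, and it cannot be subsumed in the ``telescoping'' reasoning because when $I>0$ there is no arbitrarily-low-energy circuit to exploit. Your identification of the surgery with Proposition~\ref{prop: pair} is also off target: that proposition is about flipping across the boundary of a critical droplet of a fixed edge, not flipping across an infinite domain wall, and the $I>0$ case needs coupling surgery, not spin surgery. The cylinder-approximation issues you flag in your ``main obstacle'' paragraph are real and are handled in the paper, but they are secondary to the missing dichotomy and the missing $I>0$ argument.
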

This contradicts Proposition~\ref{prop: not connected} and finishes the proof of Theorem~\ref{thm: mainthm}. 
We will apply the Newman-Stein technique from \cite{NS01}. 
The idea is to construct a random variable $I$ (see below) that is defined on the event $\{\sigma \Delta \sigma' \text{ is not connected}\}$. Proposition~\ref{prop: connected} will follow from both
\begin{equation}
\label{eqn: 1}
\widetilde{M}\{I\leq 0, \sigma \Delta \sigma' \text{ is not connected}\}=0
\end{equation}
and
\begin{equation}
\label{eqn: 2}
 \widetilde{M}\{I> 0, \sigma \Delta \sigma' \text{ is not connected}\}=0\ .
\end{equation}

\subsubsection{The definition of $I$}
We first need information about the topology of interfaces $\sigma \Delta \sigma'$ sampled from $\widetilde M$. 
This is the content of the following proposition, which is analogous to Theorem~1 in \cite{NS01}.
The proof of part 1 relies on translation invariance and part 2 is a consequence of Lemma~\ref{lem: parity}. The proof of part 3 uses ideas of  Burton \& Keane \cite{BK89}.
\begin{prop}\label{prop: dwproperties}
With $\widetilde M$ probability one, the following statements hold.
\begin{enumerate}
\item If $\sigma \Delta \sigma'$ is nonempty, then it has positive density. 
\item If $\sigma \Delta \sigma'$ is nonempty, then it does not contain any dangling ends or three-branching points.
\item  If $\sigma \Delta \sigma'$ is nonempty, then it contains no four-branching points. In particular, each dual vertex in the domain wall has degree two; thus each domain wall is a doubly infinite dual path.
Moreover, each component of the complement (in $\mathbb{R}^2$) of $\sigma \Delta \sigma'$ is unbounded and has no more than two topological ends in the following sense. If $C$ is such a component then for all bounded subsets $B$ of $\mathbb{R}^2$, the set $C \setminus B$ does not have more than two unbounded components.
\end{enumerate}
\end{prop}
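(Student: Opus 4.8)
\textbf{Proof proposal for Proposition~\ref{prop: dwproperties}.}

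The plan is to prove the three parts in order, using translation invariance, parity, and a Burton--Keane style counting argument respectively, exactly paralleling the structure of Theorem~1 in \cite{NS01} but adapted to the present setting where the configuration includes the critical values.

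For part~1, I would argue by ergodic decomposition or directly by translation invariance of $\widetilde M$ (which holds by construction, since $\widetilde M$ is a weak limit of the averaged measures $M^*_n$). Let $\rho$ denote the $\widetilde M$-expected density of edges in $\sigma\Delta\sigma'$, i.e. $\rho = \widetilde M\{e^*\in\sigma\Delta\sigma'\}$ for any fixed edge $e$ (this is independent of $e$ by translation invariance). If $\rho = 0$ then $\sigma\Delta\sigma' = \emptyset$ almost surely and there is nothing to prove; if $\rho>0$, then on the event that $\sigma\Delta\sigma'$ is nonempty one must show the empirical density along large boxes converges to a positive limit. The cleanest route is to pass to the ergodic components of $\widetilde M$ under the $\Z^2$-translation group: on each ergodic component the density is a.s. constant, and it is either $0$ (forcing $\sigma\Delta\sigma'=\emptyset$) or positive. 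Since $\{\sigma\Delta\sigma'\ne\emptyset\}$ is translation-invariant, it has measure $0$ or $1$ on each component, and the claim follows. I would need to check that the relevant indicator functions are measurable with respect to the completed product $\sigma$-algebra, which is routine.

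For part~2, the absence of dangling ends (degree-one dual vertices) and three-branching points (degree-three dual vertices) is a parity statement. Around any dual vertex $v$ there are four dual edges, dual to the four edges of a face of $\Z^2$; by Lemma~\ref{lem: parity} applied to that face, the number of edges of the face lying in $\sigma\Delta\sigma'$ — equivalently the degree of $v$ in the interface — must be even. Hence it is $0$, $2$, or $4$, ruling out degrees $1$ and $3$ deterministically (not merely almost surely, though a null set is harmlessly absorbed into ``with $\widetilde M$-probability one''). This needs no probabilistic input beyond Lemma~\ref{lem: parity}, which itself holds because $\sigma,\sigma'$ are genuine spin configurations $\widetilde M$-a.s.

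For part~3, which I expect to be the main obstacle, I would use the Burton--Keane argument to exclude four-branching points. Suppose, for contradiction, that with positive $\widetilde M$-probability some dual vertex has degree four. By translation invariance the density $p$ of such ``encounter points'' is then positive. A four-branching point in $\sigma\Delta\sigma'$, together with part~1 (positive density, hence infinite interface) and part~2 (no dangling ends), plays the role of an ``encounter point'' of at least three disjoint infinite self-avoiding dual paths emanating into the unbounded part of the interface; the standard Burton--Keane counting then shows the number of such points in an $n\times n$ box grows at least like the boundary, i.e. like $n$, whereas it is at most $p n^2$ in expectation — these are not contradictory by themselves, so the correct Burton--Keane bookkeeping is: the number of disjoint infinite paths through a box is at most its boundary size $Cn$, but each encounter point forces a ``trifurcation'' of such paths, and a box with $k$ trifurcations supports at least $k+2$ boundary-touching path-ends, giving $k \le Cn$ inside the box; combined with $\E_{\widetilde M}[\#\text{encounter points in box}] \ge p n^2$, this is the contradiction. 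One must be careful that the interface components are infinite (guaranteed by parts 1--2 once nonempty) and pairwise edge-disjoint, and that touching the boundary of the box is counted correctly; this is the delicate part. Once four-branching points are excluded, every dual vertex in $\sigma\Delta\sigma'$ has degree exactly two, so each domain wall is a doubly infinite self-avoiding dual path, and hence (being a properly embedded line in $\R^2$) separates the plane; the statement about complementary components being unbounded with at most two topological ends follows from the fact that a locally finite disjoint union of doubly infinite paths cannot enclose a bounded region (that would force a degree-$\ge 3$ vertex or a loop, already excluded), and each complementary face is bounded by at most two of these bi-infinite paths, giving at most two ends. I would spell out the topological-end count by noting that removing a large ball $B$ from a complementary component $C$ leaves components each adjacent to the unbounded portion of at most the two bounding paths.
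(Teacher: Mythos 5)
Your overall plan mirrors exactly what the paper invokes: translation invariance/ergodicity for part~1, the parity Lemma~\ref{lem: parity} for part~2, and Burton--Keane for part~3 (the paper simply cites these methods and defers details to \cite{NS01}). Parts~1 and~2 of your sketch are correct. For part~3, the Burton--Keane exclusion of four-branching points is the right argument, and the resulting conclusion that each dual vertex has degree two, each domain wall is a bi-infinite self-avoiding path, and no complementary component is bounded (since a bounded one would require a loop in the interface) is all sound.

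There is, however, a genuine gap in your final step. You assert that ``each complementary face is bounded by at most two of these bi-infinite paths, giving at most two ends.'' This does not follow from the absence of branching points. Take three pairwise disjoint bi-infinite dual paths arranged like the three arms of a ``Y'' (for example, three rays from near the origin that have been slightly separated so they never meet). They have no intersections, no loops, and every dual vertex on them has degree two, yet the central complementary component is bounded by all three paths and has three topological ends. Nothing in the degree-two property rules out this planar configuration, and it is also compatible with the two-coloring of complementary components by agreement/disagreement of $\sigma$ and $\sigma'$, since the adjacency graph of the four regions is a tree and hence bipartite. Excluding such a component requires a \emph{second} Burton--Keane-style counting argument, taking as the ``trifurcation event'' a local witness that some complementary component has at least three unbounded pieces outside a fixed box; this gives a contradiction with the linear boundary bound in exactly the same way as the first Burton--Keane step. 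Your sketch treats this as a corollary rather than as a parallel argument, which is the missing piece.

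Incidentally, the Burton--Keane bookkeeping in your first application should also be stated a bit more carefully: the inequality you want for a box of side $n$ is that the number of trifurcation points is at most the boundary size, so $\E_{\widetilde M}[\text{trifurcations in box}]\leq Cn$ deterministically, while positive density gives $\E_{\widetilde M}[\text{trifurcations in box}]\geq p n^2$; these two are incompatible for large $n$. Your phrasing ``$k+2$ boundary-touching path-ends'' implicitly assumes a connected tree, but a forest bound (losing at most a constant per component, still of order $k$ on the whole) is what is actually available; this does not break the argument but deserves a sentence.
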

Parts 2 and 3 of the proposition tell us that the regions between domain walls are topologically either strips or half-spaces. 
This implies that there is a natural ordering on domain walls: each domain wall has 0, 1 or 2 well-defined neighboring domain walls.
In particular, dual paths from one domain wall to a neighboring one are well-defined:
\begin{df}\label{def: rung}
A {\bf rung} is a non-self intersecting finite dual path that starts at a dual vertex in a domain wall and ends at a dual vertex in a different domain wall. No other dual vertices on the path are in a domain wall.
\end{df}
Let $h=h(\omega)$ be the first horizontal edge in the interface starting from the origin to the right.
For almost every configuration $\omega$ such that the interface is nonempty, such an $h$ exists because of translation and rotation invariance of $\widetilde M$. So we can define 
\[
I = \inf_R E(R)\ ,
\]
where the infimum is over all rungs $R$ touching the domain wall of $h^*$ and $E(R)$ is the energy:
\[
E(R) = \sum_{\{ x,y \}^* \in R} J_{xy} \sigma_x \sigma_y\ .
\]
See Figure~\ref{fig: rung} for a depiction of $h$ and a rung under consideration.

\begin{figure}[h]
\begin{center}
\includegraphics[height=8cm]{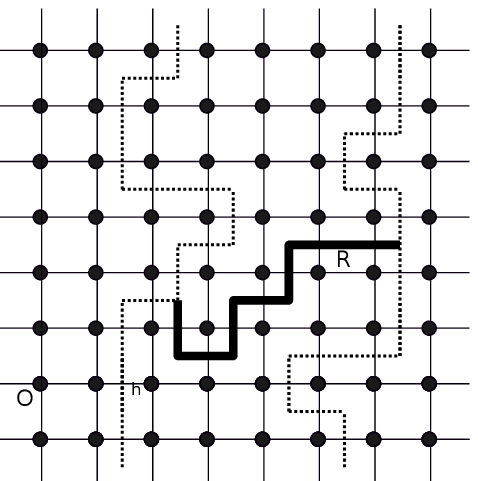}
\caption{An example of rung from the domain wall of $h$ to another domain wall.}
\label{fig: rung}
\end{center}
\end{figure}
Note that since no edge of a rung is in the interface $\sigma \Delta \sigma'$, we must have $\sigma_x\sigma_y = \sigma_x'\sigma_y'$ for all edges $\{ x,y \} \in R$. 
Therefore in the definition of $E(R)$ it does not matter if we choose $\sigma$ or $\sigma'$ to perform the computation.

\subsubsection{$I\leq 0$ has zero probability}

We will now
\begin{equation}\label{eq: assumption3}
\text{assume that }\widetilde{M}\{I\leq 0, \sigma \Delta \sigma' \text{ is not connected}\}>0\ ,
\end{equation}
and derive a contradiction.
For a dual edge $e^*$, $\e>0$ and a positive integer $K$, let $A_e(\e,K)$ be the event that (a) $e^*$ is in a rung (between any two domain walls) with energy less than $\e$ and (b) this rung has length (number of dual edges) at most $K$. Whenever $I\leq0$ and $\e>0$ there must exist a rung starting from the domain wall containing $h^*$ with energy less than $\e$. So $A_e(\e,K)$ occurs for some $e$ and $K$, and under \eqref{eq: assumption3}, there exists $\e>0$ and $K$ such that 
\[
\sum_{e \in \mathbb{E}^2} \widetilde M(A_e(\e,K)) \geq  \widetilde M \left(\cup_{e \in \mathbb{E}^2} A_e(\e,K)\right) > 0\ .
\]
By translation invariance, $\widetilde M(A_e(\e,K))>0$ for all $e$.

Let us say that dual edges $e_1^*$ and $e_2^*$ are {\it on the same side} of a domain wall $D$ if they both have a dual endpoint in the same connected component of the complement of $D$. The following lemma is the same as Lemma~1 in \cite{NS01}.
\begin{lem}\label{lem: infinitelymanyedges}
With $\widetilde M$-probability one, the following holds. If $\sigma \Delta \sigma'$ is not connected, then for each domain wall $D$, either there are infinitely many dual edges $e^*$ touching $D$ such that $A_e(\e,K)$ occurs (in {\it both} directions along $D$ and on each side of $D$) or there are zero.
\end{lem}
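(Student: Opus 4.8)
The plan is to exploit the translation invariance and ergodicity-type structure of $\widetilde M$, combined with the fact (Proposition~\ref{prop: dwproperties}) that each domain wall $D$ is a doubly-infinite dual path and that each component of the complement has at most two topological ends, hence is topologically a strip or a half-plane. First I would argue that if $\sigma\Delta\sigma'$ is not connected and some domain wall $D$ has at least one dual edge $e^*$ touching it with $A_e(\e,K)$ occurring, then in fact there must be infinitely many. The idea is the standard ``seed'' argument from \cite[Lemma~1]{NS01}: suppose on the contrary that there are finitely many such edges along $D$ on (say) one side and in one direction. Because $D$ is a doubly-infinite path, one can translate $\omega$ along an increasing sequence of lattice vectors $a_n$ running parallel to $D$, so that the copy $T^{a_n}D$ of $D$ (or rather the domain wall of the translated configuration playing the role of $D$) still carries the seed event $A_e(\e,K)$ near the origin. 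Since $\widetilde M$ is translation invariant, the translated configurations have the same law; hence with positive probability a single configuration exhibits the seed arbitrarily far out along a domain wall. The contradiction is obtained by noting that the number of domain walls crossing a fixed finite window is a.s. finite (they have positive density and are locally finite), so a positional argument forces infinitely many seeds on the \emph{same} $D$.

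The key steps, in order, are: (1) record the structural facts needed -- each nonempty $\sigma\Delta\sigma'$ consists of disjoint doubly-infinite dual paths with a well-defined neighbor structure, so that ``dual edges touching $D$ on a given side in a given direction'' is meaningful and measurable; (2) define, for a fixed dual edge $e_0^*$ adjacent to the origin, the event $B$ that $e_0^*$ touches a domain wall $D_0$, that $A_{e_0}(\e,K)$ occurs, and that $\sigma\Delta\sigma'$ is not connected -- by the paragraph preceding the lemma, $\widetilde M(A_{e_0}(\e,K))>0$, and under \eqref{eq: assumption3} we may take $\e,K$ so that $B$ has positive probability after intersecting with non-connectivity; (3) run the translation argument: for each lattice vector $a$, let $B_a = T^{-a}B$, so $\widetilde M(B_a)=\widetilde M(B)>0$; if along some domain wall in some direction/side there were only finitely many seeds with positive probability, then the events $\{$exactly $m$ seeds on this side of this wall$\}$ would have positive probability for some $m$, but translating along the wall by vectors $a_n\to\infty$ produces, on an event of positive probability, configurations with a seed at position $a_n$ -- and patching these (via a Borel--Cantelli / pigeonhole count on how many walls can pass through a bounded region) contradicts finiteness; (4) conclude that the number of seeds is a.s. $0$ or $\infty$ along each direction and side, and since the choice of direction and side was arbitrary and the count in each is $0$ or $\infty$, a further translation-invariance argument (a seed forces seeds in both directions along $D$ by reflecting/translating) upgrades this to: $0$ in all, or $\infty$ in all four of the (two directions)$\times$(two sides) sub-counts.

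The main obstacle I expect is step (3): making the ``translate along the domain wall'' argument rigorous without a clean ergodic decomposition. The subtlety is that $D$ is a random object whose geometry (which way it goes, how it meanders) is not fixed, so ``translating along $D$'' is not literally applying a fixed group element; one must instead translate the whole configuration by deterministic lattice vectors and argue that, with the right bookkeeping, the seed event reappears near the translated origin attached to the same (shifted) wall. Newman and Stein handle this via a counting/coupling argument that I would import essentially verbatim, using here the extra inputs from Proposition~\ref{prop: dwproperties} (no branching, at most two topological ends) to guarantee that the neighbor structure of domain walls is stable under the relevant limits and that ``same side, same direction'' is well-defined along the entire infinite wall. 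A secondary technical point is measurability of the event ``$e^*$ touches $D$ and there are exactly $m$ seeds on a given side in a given direction,'' which follows since each domain wall is a deterministic function of $(\sigma,\sigma')$ restricted to finite windows exhausted to infinity, and the seed events $A_e(\e,K)$ are cylinder events (bounded rung length $K$ and energy threshold, the latter having $\widetilde M$-null boundary by Lemma~\ref{lem: flexnonzero}-type reasoning / continuity of $\nu$).
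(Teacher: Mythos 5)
There is a genuine gap at the crux. Your inputs are right --- translation invariance of $\widetilde M$, the structural facts from Proposition~\ref{prop: dwproperties} that make ``side'' and ``direction'' along a domain wall well-defined, and the intuition that counting how many walls can cross a finite box must enter --- but the mechanism you propose (``translate along the domain wall and patch'') is not how the paper argues, and it founders on exactly the obstacle you flag yourself: the wall's geometry is random, so there is no deterministic group element that tracks it, and ``patching'' the translated seeds onto one wall is not a real step.

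The paper's proof is a one-shot Burton--Keane density argument that bypasses this entirely. It introduces an auxiliary event $B_e(\e,K)$: the event that $A_e(\e,K)$ occurs and, for the domain wall $D$ that $e^*$ touches, there is some direction along $D$ and some side of $D$ with \emph{no further} seeds. The crucial observation, absent from your plan, is that in any fixed realization at most \emph{four} edges $e$ can satisfy $B_e(\e,K)$ for a given domain wall --- one per (direction, side) pair. Hence
$\sum_{e \in B(n)} \mathbf 1_{B_e(\e,K)} \leq 4 N_n$, where $N_n$ counts domain walls with a dual vertex in the box $B(n)$. Since disjoint domain walls each use a distinct dual edge of $\partial_e B(n)$, we get $N_n \leq C_1 |\partial_e B(n)| = O(n)$. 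Taking $\widetilde M$-expectations and using translation invariance of $\widetilde M$ (your step (2)), the left side equals $|B(n)|\, \widetilde M(B_{e_0}(\e,K)) = O(n^2)\, \widetilde M(B_{e_0}(\e,K))$ for a fixed reference edge $e_0$. Comparing $n^2$ against $n$ forces $\widetilde M(B_{e_0}(\e,K)) = 0$, which is exactly the lemma. No translation along the wall, no conditioning, no Borel--Cantelli, and no ``exactly $m$ seeds'' events are needed. Your proposal would need to add the definition of $B_e(\e,K)$, the ``at most four per wall'' count, and the area-versus-perimeter comparison to become a proof; as written, those three ingredients --- which are the entire content of the argument --- are deferred to ``import Newman--Stein verbatim.''
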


\begin{proof}
For an edge $e$, let $B_e(\e,K)$ be the event that (a) $A_e(\e,K)$ occurs and (b) there exists a domain wall $D$ such that $e^*$ touches $D$ and in at least one direction on $D$, there are no endpoints of dual edges $h^*$ for which $A_h(\e,K)$ occurs for the same domain wall $D$ on the same side. For each $e$ such that $B_e(\e,K)$ occurs we may associate $e$ to a domain wall $D$. Note that in each realization $\omega$ in the support of $\widetilde M$, there are at most 4 edges associated with each domain wall (counting two directions and two sides of the domain wall). 

Let $B(n)$ be the box of side length $n$ centered at the origin, and let $N_n$ be the number of domain walls which have a dual vertex in $B(n)$. Last, let us use the notation that $e \in B(n)$ if both of $e$'s endpoints are in $B(n)$. The above arguments imply that
\begin{eqnarray*}
\sum_{e \in B(n)} \widetilde M (B_e(\e,K)) &=& \E_{\widetilde M} \left( \sum_{e \in B(n)} 1(B_e(\e,K)) \right)
\leq 4 \E_{\widetilde M} N_n\ .
\end{eqnarray*}
Here $\mathbb{E}_{\widetilde M}$ stands for expectation with respect to $\widetilde M$. Distinct domain walls  do not intersect so we can associate to each dual edge of the outer edge boundary $\partial_e B(n)$ (that is, having one endpoint in $B(n)$ and one in $B(n)^c$) at most one domain wall that contains it. Therefore for some suitable constants $C_1,C_2>0$
\[
\frac{1}{|B(n)|} \sum_{e \in B(n)} \widetilde M(B_e(\e,K)) \leq C_1 \frac{1}{|B(n)|} |\partial_e B(n)| \leq C_2 |B(n)|^{-1/2} \to 0
\]
as $n\to\infty$. By translation invariance, $\widetilde M(B_e(\e,K))$ is the same for all $e$ and thus equals $0$, completing the proof.

\end{proof}

\begin{rem}\label{rem: rem1}
Although the previous lemma was stated for the events $A_e(\e,K)$, the same proof can be used for a number of different events like $A_e(\e,K)$. In \cite{NS01}, these events were called ``geometrically defined.'' Examples of such events are (a) the event that $e^*$ is in a domain wall and is adjacent to a rung with a specified energy and (b) the event that $e^*$ is in a domain wall and has a specified flexibility in $\sigma$ or $\sigma'$. We will use these facts later in Section~\ref{sec: I=0}. Note that it is not enough to use only translation-invariance in the proof, as we would need to use (random) translations along a domain wall.
\end{rem}

\begin{proof}[Proof of \eqref{eqn: 1}]
For an edge $e$, $\e>0$ and a positive integer $K$, let $A_e^0(\e,K)$ be the event that $A_e(\e,K)$ occurs and one of the endpoints of $e^*$ is in the domain wall of $h^*$. If $I \leq 0$, then for each $\e$ there exists $K$ such that $A_e^0(\e,K)$ occurs. By Lemma~\ref{lem: infinitelymanyedges}, we may find infinitely many dual edges $e_n^*$ and $f_n^*$ (in both directions along the domain wall of $h^*$ but on the same side as $e^*$) such that $A_{e_n}(\e,K)$ and $A_{f_n}(\e,K)$ occur. The $e_n$'s are chosen in one direction and the $f_n$'s in the other. Let $R_n$ be a rung corresponding to $e_n$ and let $S_n$ corresponding to $f_n$. By relabeling the sequences $(e_n)$ and $(f_n)$ we may ensure that $R_n$ does not intersect $S_n$ for any $n$. (Here we are using the fact that the rungs have length at most $K$ and so  for a fixed $n_0$, there are finitely many $n$'s such that $R_{n_0}$ intersects $S_n$.) Calling $D_0$ the domain wall containing $h^*$, both rungs $S_n$ and $R_n$ connect $D_0$ to the same domain wall, say, $D_1$.

Since $R_n$ and $S_n$ are disjoint, the dual path $P$ consisting of $R_n$, $S_n$, the piece of $D_0$ between $e_n^*$ and $f_n^*$ (call it $P_0$) and the corresponding piece of $D_1$ between the intersection points of $R_n$ and $S_n$ with $D_1$ (call it $P_1$) is a circuit in the dual lattice. See Figure \ref{fig: circuit} for a depiction.
\begin{figure}[h]
\label{fig: circuit}
\begin{center}
\includegraphics[height=8cm]{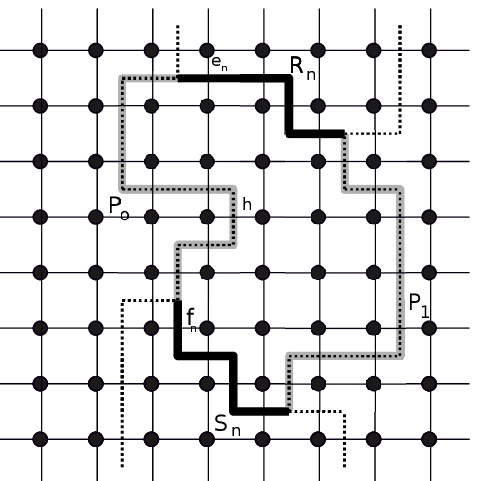}
\caption{The rungs $R_n$ and $S_n$ form a circuit in the dual lattice together with the shaded dual paths $P_0$ and $P_1$ of the domain walls.}
\end{center}
\end{figure}
The spin configurations $\sigma$ and $\sigma'$ sampled from $\widetilde M$ are ground states, hence
\[
\sum_{\{x,y\}^* \in P} J_{xy} \sigma_x \sigma_y \geq 0 \text{ and } \sum_{\{x,y\}^* \in P} J_{xy} \sigma'_x \sigma'_y \geq 0\ .
\]
For each edge $\{x,y\}$ whose dual edge is in either $R_n$ or $S_n$, we have $\sigma_x \sigma_y = \sigma'_x \sigma'_y$. For each edge $\{x,y\}$ whose dual edge is on either $P_1$ or $P_0$ we have $\sigma_x \sigma_y = - \sigma'_x \sigma'_y$. Using the fact that the energies of the rungs $R_n$ and $S_n$ are below $\e$, the above two inequalities reduce to
\[
\left| \sum_{\{x,y\}^* \in P_0} J_{xy}\sigma_x \sigma_y + \sum_{\{x,y\}^* \in P_1} J_{xy} \sigma_x \sigma_y \right| < 2 \e\ ,
\]
and so $\sum_{\{x,y\}^* \in P} J_{xy} \sigma_x \sigma_y < 4\e\ .$ As $\e$ is arbitrary, the edge $h$ has flexibility zero by Lemma~\ref{lem: flex ineq} (since $I_h=0$). By Lemma~\ref{lem: flexnonzero}, this has zero probability, proving \eqref{eqn: 1}.
\end{proof}

\subsubsection{$I>0$ has zero probability}
\label{sec: I=0}

We now show \eqref{eqn: 2} by assuming
\begin{equation}
\label{eqn: assum I>0}
 \widetilde{M}\{I> 0, \sigma \Delta \sigma' \text{ is not connected}\}>0\ .
\end{equation}
and deriving a contradiction.
The idea is that if $I > 0$ then we can find one rung near the origin whose energy we can lower by making a local modification to the couplings. 
The contradiction follows because the first edge in this rung will be the only one touching its domain wall with a certain energy property. This violates a variation of Lemma~\ref{lem: infinitelymanyedges}. 
In this section we will write $I=I(\omega)$ to emphasize the dependence of $I$ on the configuration $\omega \in \widetilde \Omega$.

For each edge $e$ let
\[
\widetilde F_e:= \min\{ F_e, F'_e \}\ .
\]
By Lemma~\ref{lem: flexnonzero}, 
\begin{equation}\label{eq: doubleflexnonzero}
\widetilde M(\widetilde F_e > 0 \text{ for all } e) = 1\ .
\end{equation}

The rest of this subsection will serve to prove the following proposition. 
Fix $\e>0$ and let $f$ be the edge connecting $(1,0)$ and $(1,1)$. 
Also define $g$ to be the edge connecting the origin to $(1,0)$. Let $X_\e$ be the intersection of the following events:
\begin{enumerate}
\item $\sigma\Delta\sigma'$ is disconnected and $I>0$;
\item $g^* \in \sigma \Delta \sigma'$;
\item $f^*$ is in a rung $R$ that satisfies $E(R) < I(\omega)+\e/2$.
\end{enumerate}
Note that on $X_\e$, the edge $h$ (used in the definition of $I$ in the previous section) equals $g$.

\begin{prop}\label{prop: xe}
If \eqref{eqn: assum I>0} holds, there exists $\e_0$ such that for all but countably many $0<\e<\e_0$, $\widetilde M\left(X_\e,~ \widetilde F_f>\e\right)>0$.
\end{prop}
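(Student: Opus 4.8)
The plan is to build the event $X_\e$ up piece by piece, at each stage showing the probability stays positive, and then to intersect with $\{\widetilde F_f > \e\}$ using Lemma~\ref{lem: flexnonzero} (in the form \eqref{eq: doubleflexnonzero}) to handle all but countably many $\e$. First I would start from the hypothesis \eqref{eqn: assum I>0}, which gives $\widetilde M(\sigma\Delta\sigma' \text{ disconnected}, I>0)>0$. On this event the edge $h$ (the first horizontal edge in the interface to the right of the origin) is well-defined, and $\sigma\Delta\sigma'$ has positive density with all the topological structure of Proposition~\ref{prop: dwproperties}, so rungs from the domain wall of $h^*$ exist. By translation invariance of $\widetilde M$ I may translate so that $h=g$ with positive probability: more precisely, the event $\{I>0,~\sigma\Delta\sigma'\text{ disconnected},~g^*\in\sigma\Delta\sigma',~g=h\}$ has positive $\widetilde M$-probability, since for $\widetilde M$-a.e.\ configuration with nonempty disconnected interface there is \emph{some} horizontal edge playing the role of $h$, and by countable additivity plus translation invariance the origin-edge $g$ plays that role with positive probability. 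This secures items (1) and (2) of $X_\e$ simultaneously.

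Next I would install item (3). On the event just described, $I = I(\omega) = \inf_R E(R)$ over rungs $R$ touching the domain wall of $g^*=h^*$, and this infimum is approached: for every $\delta>0$ there is a rung $R$ with $E(R) < I+\delta$. I want that rung to pass through the specific edge $f$ (connecting $(1,0)$ to $(1,1)$), which is the ``next'' horizontal edge along. The idea is that one can always find a near-minimizing rung whose first step leaves the domain wall through $f$, because $f$ is simply the edge across $g$ on the appropriate side; more carefully, I would argue that the collection of rungs touching the domain wall of $h^*$ and having $f^*$ as their first edge is cofinal in energy with the collection of all rungs touching that domain wall — any rung can be rerouted near its start, or one shows directly by the geometry (each domain wall is a doubly-infinite path by Proposition~\ref{prop: dwproperties}(3), so the region adjacent to it on a given side is a strip or half-plane) that restricting to rungs through $f^*$ does not raise the infimal energy. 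Hence with positive probability there is a rung $R$ through $f^*$ with $E(R) < I + \e/2$. This gives item (3) and therefore $\widetilde M(X_\e)>0$ for every $\e>0$.

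Finally, to pass from $X_\e$ to $X_\e \cap \{\widetilde F_f > \e\}$, I would use that $\{\widetilde F_f > 0\}$ has full measure by \eqref{eq: doubleflexnonzero}, so $\{\widetilde F_f > \e\} \uparrow \{\widetilde F_f > 0\}$ as $\e \downarrow 0$ up to a null set; consequently $\widetilde M(X_{\e_0} \cap \{\widetilde F_f > \e\}) \to \widetilde M(X_{\e_0} \cap \{\widetilde F_f > 0\}) = \widetilde M(X_{\e_0}) > 0$ as $\e \downarrow 0$ for any fixed $\e_0$. But this is not quite what is asked, because $X_\e$ itself shrinks with $\e$. Instead I would observe that $X_\e$ is \emph{increasing} in $\e$ (the constraint $E(R)<I+\e/2$ weakens as $\e$ grows), so $\widetilde M(X_\e)$ is nondecreasing in $\e$ and hence has at most countably many discontinuities; pick any $\e_0$ with $\widetilde M(X_{\e_0})>0$ (which exists since $\widetilde M(X_\e)>0$ for all $\e$). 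For $\e<\e_0$ we have $X_\e \supseteq$ nothing useful directly, so instead I would bound $\widetilde M(X_\e \cap \{\widetilde F_f > \e\}) \geq \widetilde M(X_\e) - \widetilde M(\widetilde F_f \leq \e)$. Since $\widetilde M(\widetilde F_f \leq \e)\to 0$ as $\e\downarrow0$, it suffices to have $\liminf_{\e\downarrow 0}\widetilde M(X_\e) > 0$; but $\widetilde M(X_\e)$ is nondecreasing, so its limit as $\e\downarrow0$ is $\inf_\e \widetilde M(X_\e) \geq 0$, and I must rule out that this infimum is $0$. This is exactly where the argument must be made quantitative: one shows that the \emph{core} event in (1)--(2) (disconnected interface, $I>0$, $g^*\in\sigma\Delta\sigma'$, $g=h$) has positive probability \emph{not depending on $\e$}, and on that event a near-minimizing rung through $f^*$ of energy $<I+\e/2$ exists for \emph{every} $\e>0$ — hence $X_\e$ contains this fixed positive-probability core event for all $\e>0$, giving $\inf_\e \widetilde M(X_\e)>0$. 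Then for all but the countably many $\e$ at which something degenerates (none are actually needed here, but the statement allows them), and in fact for all sufficiently small $\e$, $\widetilde M(X_\e \cap \{\widetilde F_f > \e\}) > 0$.

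The main obstacle I anticipate is the rerouting step in the second paragraph: showing that one may take the near-minimizing rung to pass specifically through the edge $f$ without increasing its energy beyond $I+\e/2$. This needs the precise planar/topological picture from Proposition~\ref{prop: dwproperties} — that the region bordering the domain wall of $h^*$ on the relevant side is a strip or half-plane, so that the initial segment of any rung can be replaced by the single edge $f$ (or a short controlled detour) while only \emph{lowering} the accumulated energy, using that none of these edges lie in the interface and hence contribute genuine (not cancelling) coupling terms; one must be careful that such a local replacement does not create self-intersections with the rest of the rung, which is handled exactly as in the disjointness relabeling argument in the proof of \eqref{eqn: 1}.
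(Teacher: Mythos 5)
Your strategy has a genuine gap in the second paragraph, namely the ``rerouting'' claim. You assert that the collection of rungs touching the domain wall of $h^*$ and having $f^*$ as their first dual edge is cofinal in energy with the collection of all rungs touching that domain wall, i.e.\ that $\inf\{E(R):f^*\in R\}$ equals $I(\omega)$. This is false in general. The near-minimizing rungs can leave the domain wall $D_0(\omega)$ at any of countably many dual edges, on either side, and there is no reason the infimum is approached by rungs starting at the specific edge $f^*$. Rerouting a rung to pass through $f^*$ means inserting extra dual edges whose contributions $J_{xy}\sigma_x\sigma_y$ can have either sign and are completely uncontrolled, so the energy can jump well above $I+\e/2$. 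In fact the whole subsequent argument of the paper (Propositions~\ref{prop: ye=0} and~\ref{prop: ye}) is built precisely on the possibility of a strict gap between $\widetilde I(\omega)=\inf\{E(R):f^*\in R\}$ and $I'(\omega)=\inf\{E(R):f^*\notin R\}$, and on the effort it takes to close that gap by modifying $J_f$. If your rerouting claim were true, you could essentially skip the rest of Section~4.3.3.

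Because the rerouting fails, your claim that a fixed ``core'' event of positive $\widetilde M$-probability is contained in $X_\e$ for every $\e>0$ is unjustified, and so $\liminf_{\e\downarrow 0}\widetilde M(X_\e)$ cannot be bounded below this way; your union bound $\widetilde M(X_\e\cap\{\widetilde F_f>\e\})\geq \widetilde M(X_\e)-\widetilde M(\widetilde F_f\leq \e)$ then gives nothing. The paper's argument is structurally different and handles both issues at once. It \emph{fixes} $\e$ first (taking $\e<\e_0$ outside the countable atom set of the flexibilities), observes that on the relevant event the first dual edge $f^*(\omega)$ of a near-optimal rung is a random variable with countably many possible values, so some value occurs with positive probability, and then uses translation and rotation invariance to force that value to be the fixed $f^*$. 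For the flexibility lower bound it does \emph{not} use a union bound with Lemma~\ref{lem: flexnonzero} at all; instead it brackets $g^*$ by two edges $e_1,e_2$ on the domain wall with $\widetilde F_{e_i}>\e$, super-satisfies the remaining edges $f_i$ adjacent to the domain wall (Lemma~\ref{lem: almostSS}), and invokes Lemma~\ref{lem: cylinder} to conclude $F_f\geq \min\{F_{e_1},F_{e_2}\}>\e$. That super-satisfaction step is also what generates the countable exceptional set of $\e$'s, which your argument does not account for.
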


\begin{proof}
We begin by finding deterministic replacements for many local quantities. 
Let $E_1$ be the event that $g^*\in\sigma\Delta\sigma'$, $f^*\notin\sigma\Delta\sigma'$, $\sigma\Delta\sigma'$ is disconnected and $I>0$. By translation invariance and by the assumption \eqref{eqn: assum I>0}, we
have $\widetilde{M}(E_1)>0$. We denote the domain wall of $g^*$ by $D_0(\omega)$ for $\omega\in E_1$.
By \eqref{eq: doubleflexnonzero}, we may choose $\e_0>0$ such that whenever $0<\e<\e_0$,
\[
\widetilde M(E_1,~\text{there exists } e^* \in D_0(\omega) \text{ such that } \widetilde F_e>\e) > 0\ .
\]
Furthermore, note that the distribution of $\widetilde F_e$ (for any edge $e$) under the measure $\widetilde M$ can only have countably many atoms. We fix any such $0<\e< \e_0$ in the complement of this set for the rest of the proof, so that
\begin{equation}\label{eq: rigging2}
\widetilde M(\widetilde F_e = \e \text{ for some } e) = 0\ .
\end{equation}
Let $E_2=E_1\cap\{\exists e^* \in D_0(\omega) \text{ such that } \widetilde F_e>\e\}$. 

If $\omega\in E_2$, we may find a rung $R(\omega)$ touching $D_0(\omega)$ such that 
\begin{equation}\label{eq: Rdef}
E(R(\omega))<I(\omega)+\e/2\ .
\end{equation}
This is by the definition of $I(\omega)$.
Let $f^*(\omega)$ be the dual edge in $R(\omega)$ that touches $D_0(\omega)$. There are countably many choices for $f^*(\omega)$, so we may find a deterministic $\widetilde f^*$ such that
\[
\widetilde M(E_2,~f^*(\omega)=\widetilde f^*)>0\ .
\]
In fact, by rotation and translation invariance we can take $\widetilde f^*$ to be the fixed dual edge $f^*$:
\[
\widetilde M(E_2,~f^*(\omega)=f^*)>0\ .
\]

By an argument identical to that given in Lemma~\ref{lem: infinitelymanyedges}, for  $\widetilde M$-almost all $\omega \in E_2$, there are infinitely many dual edges $e^* \in D_0(\omega)$ (in both directions along $D_0(\omega)$) for which $\widetilde F_e>\e$. (See Remark~\ref{rem: rem1}.)
Therefore, for $\widetilde M$-almost every $\omega\in E_2$, we may find dual edges $e_1^*(\omega)$ and $e_2^*(\omega)$ on $D_0(\omega)$ such that the piece of $D_0(\omega)$ from $e_1^*(\omega)$ to $e_2^*(\omega)$ contains $g^*$ and such that $\widetilde F_{e_1}$ and $\widetilde F_{e_2}$ are bigger than $\e$. 
For any $N$, let $B(0;N)$ be the box of side length $N$ centered at the origin and for a spin configuration $\sigma$, let $\sigma_N$ be the restriction to $B(0;N)$. There are only countably many choices, so we may find deterministic values of $e_1,e_2,N,\sigma_N,\sigma_N'$ and $R$ (whose first dual edge is $f^*$) such that with positive $\widetilde M$-probability on $E_2$:
\begin{enumerate}
\item$ B(0;N/2)$ contains $R$, $e_1^*$, $e_2^*$ and the piece of $D_0(\omega)$ between $e_1^*$ and $e_2^*$;
\item $\sigma(\omega)\Big|_{B(0;N)}=\sigma_N$, $\sigma'(\omega)\Big|_{B(0;N)}=\sigma'_N$, $\widetilde F_{e_1} > \e, \widetilde F_{e_2}>\e$ ;
\item $R$ is a rung with $E(R) < I(\omega) + \e/2$.
\end{enumerate}
Call $E_3$ the set of configurations satisfying the three above conditions. By construction, $\widetilde M(E_3\cap \{I>0\})>0$.
Note that by the choice of $\sigma_N$ and $\sigma_N'$, their interface contains $g^*$, $e_1^*$ and $e_2^*$ 
(and they are all connected through a single domain wall in $B(0;N)$), but the interface does not contain $f^*$. In addition, if $E_3$ occurs then $R$ is a rung, and $\sigma \Delta \sigma'$ must be disconnected. Therefore $E_2$ contains $E_3 \cap \{I>0\}$. The same arguments also show that 
\begin{equation}\label{eq: E3eq}
X_\varepsilon \supseteq E_3 \cap \{I>0\}\ .
\end{equation}

Now, write $D$ for the (deterministic) set of edges in $B(0;N)$ that are in $\sigma_N \Delta \sigma'_N$ and can be connected to $g^*$ by a path of dual edges in $\sigma_N \Delta \sigma'_N$ that stay in $B(0;N)$. 
This is just the connected ``piece'' of $D_0(\omega)$ in $B(0;N)$ for configurations $\omega \in E_3$. 
Let $f_1^*, \ldots, f_n^*$ be the dual edges with both endpoints in $B(0;N)$ that are (a) incident to $D$, (b) not in $\sigma_N \Delta \sigma_N'$ and (c) not equal to $f$. 
A depiction of these definitions is given in Figure \ref{fig: magic_rung}.

\begin{figure}[h]
\begin{center}
\includegraphics[height=8cm]{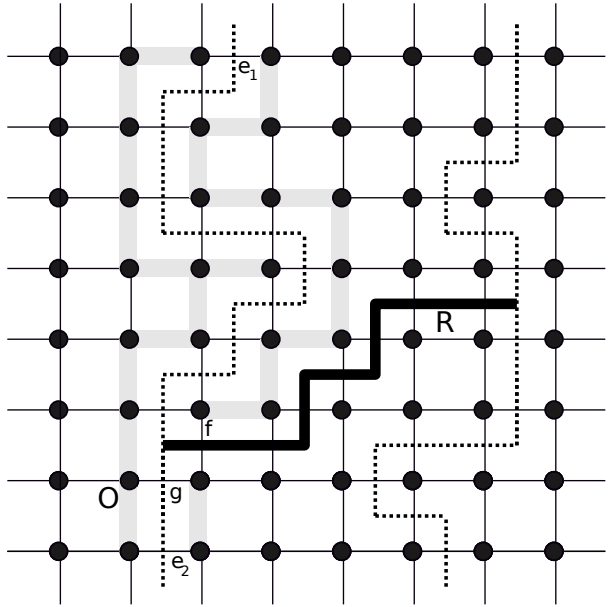}
\caption{Depiction of definitions on the event $E_3$. The two domain walls are the dual dotted lines. The rung $R$ is the thick path between the two domain walls.
The edges $f_1, f_2,\dots,f_n$ along the domain wall are the grey edges.}
\label{fig: magic_rung}
\end{center}
\end{figure}

We claim that we can order the $f_i^*$'s so that for each $i=1, \ldots, n-1$, $f_i$ has an endpoint $x_i$ that does not touch any edge from the set $\{f_{i+1}, \ldots, f_n, f\}$ (note here we are considering edges, not dual edges). To explain why this is true, we consider the graph whose edge set is equal to the union of the $f_i$'s (in the original lattice). Note that if $C_1, \ldots , C_p$ are the components of this graph then it suffices to give an ordering of each component and then concatenate these orderings together. So we may consider just one component, say, $C_1$. We will choose the edges $g_1, \ldots, g_k$ of $C_1$ in reverse order, so that our final ordering of $C_1$ will be $g_k, \ldots, g_1$. The desired condition on the $f_i$'s becomes the following for the $g_i$'s: for each $i=1, \ldots, k$, $g_i$ has an endpoint that does not touch any edge from the set $\{f, g_1, \ldots, g_{i-1}\}$. 
 
We now note that the graph whose edges are $f,f_1, \ldots, f_n$ does not contain any cycles. 
If there were a cycle then it would force the interface $\sigma \Delta \sigma'$ in the dual graph to have one too, which is impossible. Therefore the component $C_1$ above can have at most one edge that touches $f$. If there is such an edge, we let $g_1$ be it; otherwise, we choose $g_1$ arbitrarily in $C_1$. We now add edges in steps: at each step $j \geq 2$ we let $G_j$ be the current connected subgraph of $C_1$ (that is, the graph whose edges are $\{g_1, \ldots, g_{j-1}\}$) and add $g_j$ to our collection of edges so that it connects $G_j$ to its complement. This is always possible because $C_1$ does not contain a cycle. We finish at step $k$ with the desired ordering of the $g_j$'s, which, when reversed, gives the desired ordering of $C_1$.

We claim
\begin{equation}\label{eq: SSNS}
\widetilde M(E_3,~\cap_{i=1}^n\{|J_{f_i}| > \mathcal{S}_{f_i}^{x_i}\})>0\ ,
\end{equation}
where $\mathcal{S}_{f_i}^{x_i}$ is the super-satisfied value of the edge $f_i$ defined in \eqref{eq: vertexSSvalue}.
Essentially, the claim means that the event $E_3$ is somewhat stable under modifications of couplings.
Equation~\eqref{eq: SSNS} will be proved in the lemma below. 
We first show how this implies the claim of the proposition using Lemma~\ref{lem: cylinder}. 
Let $U$ be the set of all $f_i$'s.
Note that by construction, for any finite set $A$ such that $f\in\partial A$ and $\partial A\cap U=\varnothing$, we must have $e_1$ or $e_2$ in $\partial A$.
Let $\widetilde G$ be the event that $F_f \geq \min\{F_{e_1},F_{e_2}\}$ and $|J_{f_i}|\geq\mathcal{S}_{f_i}^{x_i}$ for all $f_i \in U$. 
The probability of $\widetilde G$ under any translates of $M$ is equal to that of $\widetilde G$, which is $1$ by Lemma~\ref{lem: cylinder}. 
On the other hand, $\widetilde G$ is a closed event so $\widetilde M(\widetilde G)$ is no smaller than $\limsup_k M_k^*(\widetilde G) = 1$.
This implies from \eqref{eq: SSNS}
\[
 \widetilde  M(E_3,~ \widetilde F_{f}>\e) \geq \widetilde M(E_3, \widetilde F_f >\e, ~\cap_{i=1}^n\{|J_{f_i}| \geq \mathcal{S}_{f_i}^{x_i}\} ) = \widetilde  M(E_3,~\cap_{i=1}^n\{|J_{f_i}| \geq \mathcal{S}_{f_i}^{x_i}\})>0\ .
\]
Since $X_\e\supseteq E_3 \cap \{I>0\}$ (and $\widetilde M(I=0)=0$ by \eqref{eqn: 1}), this concludes the proof of Proposition~\ref{prop: xe}.
\end{proof}

\begin{lem}\label{lem: almostSS}
Let $E_3$ be the event defined above \eqref{eq: E3eq}. Define $f_i$ and $x_i$, $i=1,...,n$ as above \eqref{eq: SSNS}.
If $\widetilde M(E_3)>0$, then $\widetilde M(E_3,~\cap_{i=1}^n\{|J_{f_i}| > \mathcal{S}_{f_i}^{x_i}\})>0$.

\end{lem}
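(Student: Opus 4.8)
My plan is to raise the couplings $J_{f_i}$ one at a time, in the order $i = 1, \ldots, n$ fixed above the statement of \eqref{eq: SSNS}, each time using Lemma~\ref{lem: SStypemod} to convert a positive-probability event into a positive-probability event on which $J_{f_i}$ is large. The key point is the ordering property established just before \eqref{eq: SSNS}: the edge $f_i$ has an endpoint $x_i$ that touches none of the edges in $\{f_{i+1}, \ldots, f_n, f\}$. Because $x_i$ is not an endpoint of any of those edges, the value $\mathcal{S}_{f_i}^{x_i} = \sum_{\{x_i,z\}\in E, z\neq y_i} |J_{x_iz}|$ depends only on couplings of edges other than $f_{i+1}, \ldots, f_n, f$ — and in particular it will not be disturbed by later modifications, nor by the modification of $J_{f_i}$ itself. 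So once we have arranged $|J_{f_i}| > \mathcal{S}_{f_i}^{x_i}$ at step $i$, this inequality persists through all subsequent steps.

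The first thing I would check is that the event $E_3$ has the monotonicity structure required by Lemma~\ref{lem: SStypemod} with respect to each coupling $J_{f_i}$. Recall $E_3$ fixes $\sigma$ and $\sigma'$ on $B(0;N)$, asserts $\widetilde F_{e_1} > \e$, $\widetilde F_{e_2} > \e$, and asserts that $R$ is a rung with $E(R) < I(\omega) + \e/2$. Since $f_i^* \notin \sigma_N \Delta \sigma_N'$, we have $\sigma_{f_i} = \sigma'_{f_i}$; fix its (deterministic) sign, say $\sigma_{f_i} = \sigma'_{f_i} = +1$ (the other sign is handled by lowering $J_{f_i}$ and using the second half of Lemma~\ref{lem: SStypemod}). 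I claim that on $E_3$, increasing $J_{f_i}$ past its current value keeps the configuration in $E_3$. Indeed: (a) the frozen-spin part is unaffected since $\sigma, \sigma'$ are held fixed; (b) raising $J_{f_i}$ with $\sigma_{f_i}\sigma'_{f_i} = +1$ on a satisfied bond only increases the relevant partial sums, so $\sigma, \sigma'$ remain ground states on $\Z^2$ and, by Lemma~\ref{lem: monotone} / the flexibility formula \eqref{eqn: flex}, the flexibilities $F_{e_1}, F_{e_2}$ (and $F'_{e_1}, F'_{e_2}$) can only increase — here one uses that $e_1, e_2$ are in the interface while $f_i$ is not, so the monotonicity does not work against us; (c) the rung energy $E(R)$ changes only if $f_i$ happens to lie on $R$, but even then $I(\omega)$ is an infimum over all rungs touching $D_0$, which also includes $R$, so the inequality $E(R) < I(\omega) + \e/2$ is invariant under adding the same constant to both sides — and for rungs not containing $f_i$, both $E(R)$ and $I(\omega)$ are unchanged. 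A small technical check: one must confirm that raising $J_{f_i}$ does not change which dual edges are in $\sigma\Delta\sigma'$ (it cannot, since $\sigma,\sigma'$ are literally fixed), so $D_0$, the rung structure, and disconnectedness are all preserved. Thus $E_3$ (intersected with the sign event $\{\sigma_{f_i} = +1\}$) satisfies \eqref{eqn: event monotone}.

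Given this, the induction is routine. Write $E_3^{(0)} = E_3$ and, having produced $E_3^{(i-1)}$ with $\widetilde M(E_3^{(i-1)}) > 0$ and $|J_{f_j}| > \mathcal{S}_{f_j}^{x_j}$ on it for $j < i$, apply Lemma~\ref{lem: SStypemod} to the event $E_3^{(i-1)}$ and the edge $e = f_i$: since $\mathcal{S}_{f_i}^{x_i}$ is a fixed finite quantity not depending on $J_{f_i}$, and since for each $\lambda$ we have $\widetilde M(E_3^{(i-1)}, J_{f_i} \geq \lambda) \geq (1/2)\,\nu([\lambda,\infty))\,\widetilde M(E_3^{(i-1)}) > 0$ (choosing $\lambda$ larger than the essential supremum of $\mathcal{S}_{f_i}^{x_i}$ on the event would give zero, so instead I partition on the value of $\mathcal{S}_{f_i}^{x_i}$: since $\nu$ has full support, for $\widetilde M$-a.e.\ $\omega$ in the event there is a $\lambda_\omega < \infty$ with $\nu([\lambda_\omega,\infty)) > 0$, so by a countable exhaustion over rational $\lambda$ there is a fixed $\lambda$ with $\widetilde M(E_3^{(i-1)}, \mathcal{S}_{f_i}^{x_i} < \lambda) > 0$, and on that sub-event Lemma~\ref{lem: SStypemod} gives $\widetilde M(E_3^{(i-1)}, \mathcal{S}_{f_i}^{x_i} < \lambda, J_{f_i} \geq \lambda) > 0$). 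Set $E_3^{(i)}$ to be this sub-event; on it $|J_{f_i}| \geq J_{f_i} \geq \lambda > \mathcal{S}_{f_i}^{x_i}$, and because later modifications do not touch couplings incident to $x_i$, the inequality survives. After $n$ steps, $E_3^{(n)} \subseteq E_3 \cap \bigcap_{i=1}^n \{|J_{f_i}| > \mathcal{S}_{f_i}^{x_i}\}$ and has positive $\widetilde M$-probability. Wait — one subtlety I would need to be careful about: strictly, the lemma needs to be invoked for $\widetilde M$ rather than $M$; but $\widetilde M$ is a translation-invariant weak limit of averages of (translates of) $M^*$, and the monotonicity inequalities \eqref{eqn: increasing}–\eqref{eqn: mixed} pass to $\widetilde M$ for cylinder-type events by the same argument used to prove Lemma~\ref{lem: flex ineq} and Lemma~\ref{lem: flexnonzero}; so I would either cite that the monotonicity survives the limit or re-run the Lemma~\ref{lem: SStypemod} computation directly at the level of $M^*_n$ and take limits (using that $E_3$ and the modified events are, up to $\widetilde M$-null boundary, continuity sets in the sense of Remark~\ref{rem: rabbit}, since they are finite conjunctions of spin-cylinder events and open coupling conditions).

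The main obstacle I anticipate is not the induction itself but verifying cleanly that $E_3$ is preserved under each single-coupling increase — in particular the claim in (b) that raising $J_{f_i}$ cannot decrease $\widetilde F_{e_1}$ or $\widetilde F_{e_2}$, and the claim in (c) that the rung/energy structure behaves correctly, including that $I(\omega)$ and $E(R)$ shift by the same amount in the one case where $f_i \in R$. This requires carefully unwinding the definitions of $F_e$ via \eqref{eqn: flex} and of $I$ as an infimum over rungs, and checking signs. Everything else (the ordering of the $f_i$'s, the acyclicity argument, the exhaustion over $\lambda$) is either already in hand from the discussion preceding \eqref{eq: SSNS} or is a standard measure-theoretic maneuver.
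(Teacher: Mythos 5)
Your high-level plan — raise the $J_{f_i}$ one at a time in the chosen order, use the monotonicity Lemma~\ref{lem: SStypemod} at each step, and exploit the fact that $x_i$ is not an endpoint of $f, f_{i+1}, \ldots, f_n$ so that each super-satisfaction persists — is exactly the right idea, and your check that raising $J_{f_i}$ (on the sign event $\sigma_{f_i}=\sigma'_{f_i}=+1$) can only increase the flexibilities $F_{e_1}, F_{e_2}$ and cannot decrease the margin $I(\omega)+\e/2 - E(R)$ is in the correct spirit. (One small inaccuracy: the case ``$f_i$ lies on $R$'' cannot occur — $R$ is a rung, so its only dual vertex on $D_0$ is the start of $f^*$, and every $f_i^*$ has a dual endpoint on $D \subset D_0$ while $f_i^* \neq f^*$.)

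The genuine gap is where you relegate the main difficulty to a parenthetical: you propose to ``re-run the Lemma~\ref{lem: SStypemod} computation at the level of $M^*_n$ and take limits, using that $E_3$ and the modified events are, up to $\widetilde M$-null boundary, continuity sets \ldots since they are finite conjunctions of spin-cylinder events and open coupling conditions.'' This is false. The condition ``$R$ is a rung with $E(R) < I(\omega) + \e/2$'' depends on $I(\omega)$, which is an infimum of rung energies over \emph{all} rungs touching $D_0(\omega)$, including rungs arbitrarily far from the origin; likewise ``$D_0(\omega)$ is disconnected from the rest of the interface,'' ``$R$ is a rung,'' and ``$B(0;N/2)$ contains the piece of $D_0(\omega)$ between $e_1^*$ and $e_2^*$'' all depend on infinitely many coordinates. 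So $E_3$ is neither a cylinder set nor, up to a null boundary, approximable by one without further argument, and weak convergence $M^*_n \to \widetilde M$ gives you nothing directly about $\widetilde M(E_3)$ or about the monotonicity at the level of $\widetilde M$. Handling this is exactly where the paper's proof does its real work: it replaces the rung/$I$-condition by a double-indexed cylinder approximation $\widetilde B^R_{j,l}$ built from ``$(j,l)$-domain walls'' and ``$(j,l)$-rungs'' confined to $B(0;j)$ (with connectivity diagnosed inside $B(0;l)$), proves $\lim_j \limsup_l \widetilde M(\widetilde B^R \, \Delta \, \widetilde B^R_{j,l}) = 0$, verifies the monotonicity property \eqref{eqn: event monotone} for the cylinder events $E^q_{j,l}$ on $T^k\Omega$ (the argument that no $(j,l)$-rung containing $f^*$ can contain $f_q^*$ is the cylinder-level analogue of your point (c)), applies Lemma~\ref{lem: SStypemod} to $T^k M$, and only then passes to the limit in $k$, $l$, $j$ after checking that the relevant boundaries have $\widetilde M$-measure zero. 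Without something of this kind your proof does not close.

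A secondary but real difference is the bookkeeping of thresholds. You invoke a countable exhaustion over rational $\lambda$ to control $\mathcal{S}_{f_i}^{x_i}$, noting it is a.s.\ finite. The paper instead first fixes $\lambda$ with $\widetilde M(E_3, \ |J_e|\le\lambda \ \forall e^*\in S) > 0$ (where $S$ is the complementary set of dual edges in $B(0;N)$), and then installs $|J_{f_i}| \in [a_i, b_i]$ with $a_{i+1} > 4 b_i$ and $b_0 = \lambda$; this yields the deterministic bound $\mathcal{S}_{f_i}^{x_i} < 4 b_{i-1} < a_i \le |J_{f_i}|$ directly, and the resulting intervals plug cleanly into the cylinder-approximation scheme. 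Your exhaustion argument is sound in principle and could be substituted, but the paper's nested intervals are what make the limiting argument uniform and concrete; in particular keeping an upper bound $b_i$ on each $|J_{f_i}|$ is what lets the cascade of bounds on $\mathcal{S}_{f_{i+1}}^{x_{i+1}}$ be deterministic rather than measure-theoretic.
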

\begin{proof}
Write $S$ for the set of dual edges in $B(0;N)$ that are not equal to any of the $f_i^*$'s or to $f^*$.
Since $\widetilde M(E_3)>0$, we can choose $\lambda>0$ such that
\[
\widetilde M(E_3,~|J_e|\leq \lambda \text{ for all } e^* \in S)>0\ .
\]
Write $E_4$ for this event. We will show that $\widetilde M(E_4,~\cap_{i=1}^n\{|J_{f_i}| > \mathcal{S}_{f_i}^{x_i}\})>0$.
This will follow if we find positive numbers $a_1, \ldots, a_n,$ $b_1, \ldots, b_n$ such that the following hold:
\begin{enumerate}
\item $a_i<b_i$ for all $i$;
\item $a_{i+1}> 4b_i$ for $i=0,1, \ldots, n-1$ and $b_0:= \lambda$.
\item $\widetilde M(E_4,~|J_{f_i}|\in[a_i,b_i] \text{ for all $i$}\})>0$;
\end{enumerate}
These conditions imply that $|J_{f_i}|>\mathcal{S}_{f_i}^{x_i}$ for all $i$, as $|J_{f_i}|\geq a_i> 4 b_{i-1}$ and $4b_{i-1}>\mathcal{S}_{f_i}^{x_i}$. Here we are using the fact that $x_i$ does not touch the set $\{f,f_{i+1}, \ldots, f_n\}$.
For $q>1$, define
\[
E_4^q:= E_4 \cap \{|J_i| \in [a_i,b_i] \text{ for all }i=1, \ldots, q-1\}
\] 
and for $q=1$ define $E_4^q := E_4$. 
We will proceed by induction to show that if $\widetilde M(E_4^{q})>0$ then $\widetilde M(E_4^{q+1})>0$ with appropriately chosen $a_q,b_q$, for $q=1,...,n-1$.
Note that $\widetilde M(E_4)>0$. The case $q=n-1$ gives the desired conclusion.
For the rest of the proof, we assume that the spins at the endpoints of $f_q$ are the same. The subsequent argument is similar in the other case.
The idea is to use Lemma~\ref{lem: SStypemod}, which shows that the probability mass is somewhat conserved when one value of the coupling is increased for 
events satisfying \eqref{eqn: event monotone}. Two obstacles have to be overcome. 
First, the properties of $M$ (in particular, the monotonicity property) needed in Lemma~\ref{lem: SStypemod} do not directly carry over under weak limits to $\widetilde M$.
Therefore, we need to go back to $M$ to apply the lemma. Second, weak convergence of the measures applies to cylinder events. Note that, from its definition, $E_4^q$ is an intersection
of a finite number of cylinder events except for $\{R \text{ is a rung with } E(R) < I(\omega) + \e/2\}$. 
To apply Lemma~\ref{lem: SStypemod}, we thus need to find a cylinder approximation for this condition.

Let $\widetilde B^R \subseteq \widetilde \Omega$ be the event $\{R \text{ is a rung with } E(R) < I(\omega) + \e/2\}$  intersected with the event $\{g^* \in \sigma \Delta \sigma'\}$. 
We will first define a double sequence of cylinder events $(\widetilde B^R_{j,l})$ in $\widetilde \Omega$ with 
\begin{equation}\label{eq: symmetricdiff}
\lim_{j \to \infty} \limsup_{l \to \infty} \widetilde M(\widetilde B^R \Delta \widetilde B^R_{j,l}) = 0\ ,
\end{equation}
where $\Delta$ represents the symmetric difference of events. 

Let $B(0;j)$ be the box of side-length $j$ centered at $0$ and let $l \geq j$. 
For arbitrary spin configurations $\sigma$ and $\sigma'$, the interface $\sigma \Delta \sigma'$ splits into different connected components in the following way. 
Two dual edges in $B(0;j) \cap \sigma \Delta \sigma'$ (that is, they have both endpoints in $B(0;j)$) are said to be {\it $l$-connected} if they are connected by a path of dual edges in $\sigma \Delta \sigma'$, all of which remain in $B(0;l)$. Let $D_0(j,l), D_1(j,l), \ldots, D_t(j,l)$ be the $l$-connected components of such edges in $B(0;j)$, where $D_0(j,l)$ is the connected component containing $g^*$ (if one exists). 
Call these the {\it $(j,l)$-domain walls} (see Figure \ref{fig: (j,l)}).
We define a {\it $(j,l)$-rung} as a finite path of dual edges in $B(0;j)$ which starts in a $(j,l)$-domain wall and ends in a different one, and no dual vertices on the path except for the starting and ending points are on a $(j,l)$-domain wall. 
\begin{figure}[h]
\begin{center}
\includegraphics[height=7cm]{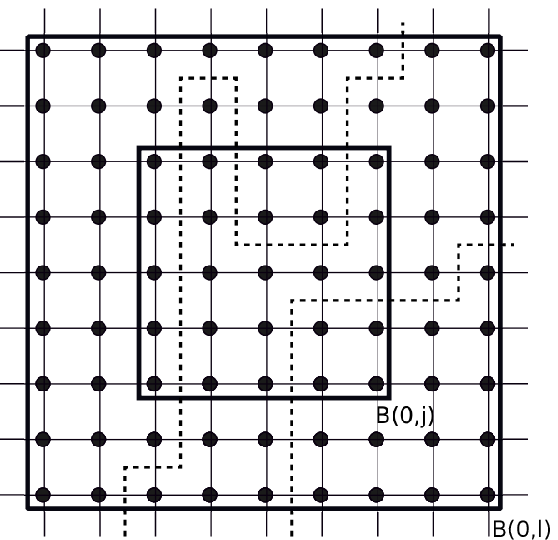}
\caption{In this figure, when we restrict the interface to $B(0;j)$, there are three components (connected inside this box). However, two of them are $l$-connected. Therefore, there are two $(j,l)$-domain walls in $B(0;j)$. }
\label{fig: (j,l)}
\end{center}
\end{figure}

On the event $\widetilde B^R$, $R$ is a $(j,l)$-rung for all $l \geq j\geq N$. Let $\widetilde B^R_{j,l}\subseteq \widetilde \Omega$ be the event that 
\begin{enumerate}
\item $g^* \in \sigma \Delta \sigma'$;
\item $R$ is a $(j,l)$-rung;
\item no other $(j,l)$-rung between $D_0(j,l)$ and another $(j,l)$-domain wall has energy less than the energy of $R$ minus $\e/2$. 
\end{enumerate}

We start by showing that
\begin{equation}\label{eq: stapler1}
\lim_{j \to \infty} \limsup_{l \to \infty} \widetilde M(\widetilde B^R \setminus \widetilde B^R_{j,l}) = 0\ .
\end{equation}
Consider $\omega \in \widetilde B^R$. 
It suffices to prove that there exists $J(\omega)$ and for each $j\geq J(\omega)$ there is a $L(j,\omega)$ such that 
\begin{equation}\label{eq: decondition}
j\geq J(\omega) \text{ and } l \geq L(j,\omega) \text{ implies } \omega \in \widetilde B^R_{j,l}\ .
\end{equation}
This implies \eqref{eq: stapler1} because if $\widetilde B^R \setminus \widetilde B^R_{j,l}$ occurs then either $j \leq J(\omega)$ or both $j \geq J(\omega)$ and $l \leq L(j,\omega)$. 
Therefore the limit in \eqref{eq: stapler1} is bounded above by
\[
\lim_{j \to \infty} \lim_{l \to \infty} \left[ \widetilde M(j \leq J(\omega)) + \widetilde M(l \leq L(j, \omega)) \right] = 0\ .
\]
Take $j\geq N$. 
Note that there are at most $|B(0;j)|$ number of $(j,l)$-domain walls in $B(0;j)$. 
We claim that there exists $L(j)$ such that all $(j,l)$-rungs are rungs for $l\geq L(j)$. Indeed, if $S$ is a rung then it is plainly a $(j,l)$-rung. 
On the other hand, if $S$ is a $(j,l)$-rung, then either it connects distinct domain walls in $\sigma\Delta\sigma'$ or simply two pieces of  the same domain wall of
$\sigma\Delta\sigma'$ that are $l$-connected for $l$ large enough. Now, since $\omega \in \widetilde B^R$, we must have $g^* \in \sigma \Delta \sigma'$. 
Moreover, $R$ is a rung and so it is also a $(j,l)$-rung for any $l$. 
By definition of $\widetilde B^R$, no rung touching $D_0(\omega)$ can have energy less than the energy of $R$ minus $\e/2$. 
Therefore for $l \geq L(j)$, no $(j,l)$-rung can either, and we see that $\omega \in\widetilde B^R_{j,l}$ for $J(\omega) = N$ and $L(j,\omega) = L(j)$ in \eqref{eq: decondition}.

To show the other half of \eqref{eq: symmetricdiff}, it remains to prove that
\begin{equation}\label{eq: B^R_ij minus B^R}
\lim_{j \to \infty} \limsup_{l \to \infty} \widetilde M(\widetilde B^R_{j,l} \setminus \widetilde B^R) = 0\ .
\end{equation}
We claim that if $\omega \notin \widetilde B^R$, there exists $J(\omega)$ such that for each $l \geq j \geq J(\omega)$, $\omega \notin \widetilde B^R_{j,l}$ as well. 
This implies \eqref{eq: B^R_ij minus B^R} by the same argument as before. 
Since $\omega \notin \widetilde B^R$, at least one of three defining conditions of $\widetilde B^R$ must fail. 
In each case, we will show that $\omega$ cannot be in $\widetilde B^R_{j,l}$ for all large $j$ and $l$. 
First if $g^* \notin \sigma \Delta \sigma'$ then we will never have $\omega \in B^R_{j,l}$, so we may assume the contrary. 
If $R$ is a $(j,l)$-rung for some $l \geq j \geq N$ then it connects two $(j,l)$-domain walls. As in the previous paragraph, either these $(j,l)$-domain walls are in fact distinct domain walls or they are part of the same domain wall for $j$ and $l$ large enough. This argument shows that if $R$ is not a rung, there exists $J(\omega)$ such that it will also not be a $(j,l)$-rung for $l \geq j \geq J(\omega)$. Finally, if $g^* \in \sigma \Delta \sigma'$ and $R$ is a rung, suppose that there is another rung $S$ touching $D_0(\omega)$ with energy less than the energy of $R$ minus $\e/2$. Then the same argument as above shows there exists $J'(\omega)$ such that for $l \geq j \geq J'(\omega)$, $S$ will be a $(j,l)$-rung with energy less than $E(R)-\e/2$ and therefore $\omega \notin \widetilde B^R_{j,l}$. This proves \eqref{eq: B^R_ij minus B^R} and thus \eqref{eq: symmetricdiff}.

Recall that the event $E_4^q$ is the intersection of the following: 
\begin{enumerate}
\item[a.] the three events that comprise $E_3$ defined above \eqref{eq: E3eq} (the last one of which we can replace by $\widetilde B^R$);
\item[b.] $|J_e| \leq \lambda$ for all $e^* \in S$;
\item[c.] $|J_{f_i}| \in [a_i,b_i]$ for all $i=1, \ldots, q-1$.
\end{enumerate}
 Let $E_{j,l}^q$ be the cylinder approximation of $E_4^q$ that is, the event $E_4^q$ where $\widetilde B^R$ is replaced by the cylinder event $\widetilde B_{j,l}^R$. 
Note that $E_{j,l}^q$ can be seen as an event in the translated space $T^k\Omega$ for $k$ large enough such that the box $B(0;l)$ is contained in $T^k V_H$.
Recall that in $\Omega$ as well as in $T^k\Omega$, the flexibilities $F_e$ and $F_e'$ are functions of $J$ and $\sigma,\sigma'$ given by the formula \eqref{eqn: flex}. Note also by directly applying \eqref{eq: symmetricdiff}, we find
\begin{equation}\label{eq: E_4^q minus E_jl^q}
\lim_{l \to \infty} \limsup_{j \to \infty} \widetilde M(E_4^q \Delta E_{j,l}^q) = 0\ .
\end{equation}

We claim that $E^q_{j,l}$ (and $T^k E^q_{j,l}$) has the property \eqref{eqn: event monotone}:
\begin{equation}\label{eq: Ckproperty}
\text{If }(J,\sigma,\sigma') \in E^q_{j,l} \text{ then }(J(f_q,s),\sigma,\sigma') \in E^q_{j,l} \text{ whenever }s \geq J_{f_q}\ .
\end{equation}
To check this, we first remark that if $|J_{f_i}| \in [a_i,b_i]$ for all $i=1, \ldots, q-1$ and $|J_e|\leq \lambda$ for all $e \in S$ for $(J,\sigma,\sigma')$ then this is plainly true for $(J(f_q,s),\sigma,\sigma')$ for any $s$. This handles conditions (b) and (c) of $E_{j,l}^q$. To address condition (a), we first note that the event that $g^* \in \sigma \Delta \sigma'$ (part of $\widetilde B_{j,l}^R$ in the third part of (a)) is unaffected by $J_{f_q}$, so it will continue to hold. 
In the other two parts of (a), no conditions involve the couplings except for $F_e>\varepsilon$, $F_e'>\varepsilon$. 
But since the spins at the endpoint of $f_q$ are the same, increasing $J_{f_q}$ can only possibly increase $F_e$ and $F_e'$ as seen from \eqref{eqn: flex}. (Note here that $F_e$ and $F_e'$ are simply images under $\Phi$ of $F_e(J,\sigma)$ and $F_e'(J,\sigma)$ on $\Omega$ or $T^k \Omega$, so since this argument is valid on these spaces, it holds as stated on $\Omega^*$ or $T^k \Omega^*$.)
Finally, to establish \eqref{eq: Ckproperty}, it remains to show that if $(J,\sigma,\sigma')\in \widetilde B^R_{j,l}$, then $(J(f_q,s),\sigma,\sigma')\in \widetilde B^R_{j,l}$ for $s \geq J_{f_q}$. 
Note that because $l \geq j \geq N,$ the set $D$ (defined before the statement of the present proposition) is contained in the $(j,l)$-domain wall of $g^*$ and since $f_q^*$ is adjacent to $D$, no $(j,l)$-rung containing $f^*$ can contain $f_q^*$. So increasing the value of $J_{f_q}$ to $s$ can only increase the energies of $(j,l)$-rungs that do not contain $f^*$. 
This means that if no $(j,l)$-rungs have energy less than the energy of $R$ minus $\e/2$ in $(J,\sigma,\sigma')$ then the same will be true in $(J(f_q,s),\sigma,\sigma')$ for $s \geq J_{f_q}$. 
We have thus proved \eqref{eq: Ckproperty}.

We are now in a position to use Lemma~\ref{lem: SStypemod}. 
Since $T^kM$ is just a translate of $M$, the lemma holds for the measure $T^k M$ as well, so we conclude that for all $a\in \R$ and $k \geq l \geq j \geq N$,
\begin{equation}\label{eq: floss}
T^k M(E_{j,l}^q,~ J_{f_q} \geq a) \geq  (1/2)\nu([a,\infty)) ~T^kM(E_{j,l}^q)\ .
\end{equation}
This holds trivially for $M$ replaced by $M^*$, on the space $\Omega^*$ in \eqref{eqn: Omega*}, where the flexibilities are added to the coordinates.
We would like to take limits in this inequality. For this purpose, the reader may trace through the definition of $E_{j,l}^q$ and see that this event is an intersection of a cylinder event $Y$ involving only spins and couplings
and another event $Z$ equal to $\{\widetilde F_{e_1} >\e \text{ and } \widetilde F_{e_2} > \e\}$. 
The boundary $\partial Y$ is included in the union of $\partial\{|J_e| \leq \lambda ~ \forall e^* \in S\}$, $\partial\{|J_{f_i}| \in [a_i,b_i]: \forall i=1, \ldots, q-1\}$,
$\partial\{g^* \in \sigma \Delta \sigma'\}$, $\partial\{\text{$R$ is a $(j,l)$-rung}\}$, and the boundary of the event {\it $\{$no other $(j,l)$-rung between $D_0(j,l)$ and another $(j,l)$-domain wall has energy less than the energy of $R$ minus $\e/2\}$}.
It is straightforward to see that the first four have $\widetilde M$-probability zero.
As for the fifth one, notice that the energy of a $(j,l)$-rung is a linear function of the couplings in the box $B(0;j)$ with coefficients $+1$ or $-1$.
There are only a finite number of such linear combinations. Therefore, the probability that the difference of energy between any two rungs is exactly $\e/2$ is $0$. 
By condition \eqref{eq: rigging2}, we also have $\partial Z$ of $\widetilde M$-probability zero. Therefore by the discussion preceding Remark~\ref{rem: rabbit}, we have
\[
\lim_{k \to \infty} M_k^*(E_{j,l}^q) = \widetilde M(E_{j,l}^q)\ .
\]
A similar argument holds for the left side of \eqref{eq: floss}. Averaging over $k$ and taking limits in this inequality, we find
\[
\widetilde M(E_{j,l}^q,~ J_{f_q} \geq a) \geq (1/2)\nu([a,\infty)) ~\widetilde M(E_{j,l}^q)\ ,
\]
Now we take $l \to \infty$ and $j \to \infty$, using \eqref{eq: E_4^q minus E_jl^q} to obtain
\[
\widetilde M(E_4^q,~ J_{f_q} \geq a) \geq (1/2)\nu([a,\infty)) ~\widetilde M(E_4^q)\ .
\]
By the induction hypothesis, $\widetilde M(E_4^q) >0$.
To finish the proof of the lemma, it thus suffices to take $a=a_q = 4b_{q-1} +1$ and choose any $b_q>a_q$.
\end{proof}

\subsubsection{Finishing the proof}
In this subsection we use Proposition~\ref{prop: xe} to prove a final proposition about rung energies. 
This will allow us to reach a contradiction and establish \eqref{eqn: 2}.

Recall that $f$ refers to the fixed edge connecting $(0,1)$ to $(1,1)$ and $g$ is the edge connecting the origin to $(1,0)$, see Figure \ref{fig: magic_rung}. Our goal in this section is to show that $J_f$ can be modified so that the energy of some rung that contains $f^*$ decreases below the energies of all rungs that do not contain $f^*$. To do this, we introduce two variants of $I(\omega)$, dealing with rungs that contain $f^*$ and rungs that do not.

On the event $X_\e$, we define the variable $I'(\omega)$ to be the infimum of energies of all rungs that touch $D_0(\omega)$ (the domain wall that contains $h^*=g^*$) and that do not contain $f^*$. 
Also we define $\widetilde I(\omega)$ to be the infimum of energies of all rungs that contain $f^*$. Later in the proof we will use a small technical fact: the distribution of $\widetilde I(\omega) - I'(\omega)$ (under $\widetilde M$) can have only countably many point masses. Therefore we may choose $\e$ small enough so that the conclusion of Proposition~\ref{prop: xe} holds and so that
\begin{equation}\label{eq: technical}
\widetilde M(\omega: \widetilde I(\omega) - I'(\omega) = \e/2 \text{ or } \widetilde I(\omega) - I'(\omega) = -\e/4) = 0\ .
\end{equation}
This $\e$ will be fixed for the rest of the paper.

Let $Y_\e$ be the event that:
\begin{enumerate}
\item $\sigma\Delta\sigma'$ is disconnected and $I>0$;
\item $g^* \in \sigma \Delta \sigma'$;
\item $f^*$ is in a rung $R$ that satisfies $E(R) <I'(\omega)-\e/4$.
\end{enumerate}

The next two propositions establish the desired contradiction.
The idea is that, on the one hand (cf. Proposition~\ref{prop: ye=0}), $Y_\e$ must have zero probability since by
Lemma~\ref{lem: infinitelymanyedges} and Remark~\ref{rem: rem1} an event along the domain wall occurs infinitely often, whereas $f^*$ must be unique along the domain wall by the definition of $I'$.
On the other hand, we will use Proposition~\ref{prop: xe} in Proposition~\ref{prop: ye} to show that the event $Y_\varepsilon$ must have positive probability.

\begin{prop}\label{prop: ye=0}
The following statement holds.
\[
\widetilde M(Y_\e)=0\ .
\]
\end{prop}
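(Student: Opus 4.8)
The plan is to derive a contradiction from $\widetilde M(Y_\e)>0$ by means of the Newman--Stein counting principle (Lemma~\ref{lem: infinitelymanyedges} together with Remark~\ref{rem: rem1}): the defining property of $Y_\e$ forces $f^*$ to be the \emph{unique} edge touching the domain wall $D_0$ whose incident rung has strictly minimal energy, whereas a geometrically defined property of this type can hold for either zero or infinitely many edges touching a fixed domain wall. First I would isolate the property. For a configuration $\omega$ with $\sigma\Delta\sigma'$ disconnected, say that a dual edge $e^*$ is \emph{$\e$-good for a domain wall $D$} if $e^*$ has an endpoint on $D$, lies in some rung $R$ having an endpoint on $D$, and $E(R)+\e/4<E(R')$ for every rung $R'$ with an endpoint on $D$ and $e^*\notin R'$ (the last condition being vacuous if there is no such $R'$). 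On $Y_\e$ the edge $f^*$ is $\e$-good for $D_0$: by condition (2), $g^*\in\sigma\Delta\sigma'$, hence $h=g$, and since $g^*$ and $f^*$ share a dual vertex that vertex lies on $D_0$, so $f^*$ touches $D_0$; the rung $R$ of condition (3) contains $f^*$, and because the only edge of a rung touching a given domain wall is the one incident to its endpoint on that wall, $R$ touches $D_0$; finally $I'(\omega)$ is by definition the infimum of $E(R')$ over rungs $R'$ touching $D_0$ with $f^*\notin R'$, so condition (3) is exactly the required energy gap.

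The combinatorial heart of the proof is that \emph{each domain wall $D$ admits at most one $\e$-good edge}. If $e_1^*\neq e_2^*$ were both $\e$-good for $D$, with witnessing rungs $R_1\ni e_1^*$ and $R_2\ni e_2^*$ (both touching $D$), then, since a rung touches each domain wall at most at one of its two ends, $e_i^*$ is the unique edge of $R_i$ touching $D$; hence $e_2^*\notin R_1$ and $e_1^*\notin R_2$. Applying the $\e$-good inequality for $e_1^*$ to the rung $R_2$ gives $E(R_1)+\e/4<E(R_2)$, and symmetrically $E(R_2)+\e/4<E(R_1)$; adding yields $\e/2<0$, a contradiction.

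To finish, I would invoke the counting argument underlying Lemma~\ref{lem: infinitelymanyedges}: the event that $\sigma\Delta\sigma'$ is disconnected and $e^*$ is $\e$-good for some domain wall is geometrically defined in the sense of Remark~\ref{rem: rem1}, so $\widetilde M$-almost surely the number of $\e$-good edges for each domain wall is $0$ or $\infty$; combined with the previous paragraph, it is $0$ almost surely, so $\widetilde M$-a.s.\ no edge is $\e$-good for $D_0$. Since on $Y_\e$ the edge $f^*$ \emph{is} $\e$-good for $D_0$, we conclude $\widetilde M(Y_\e)=0$. The only point needing care is to confirm that the $\e$-good property genuinely falls under the scope of Lemma~\ref{lem: infinitelymanyedges} / Remark~\ref{rem: rem1}: one reruns the Burton--Keane box-count of that lemma, using translation invariance of $\widetilde M$ and associating each $\e$-good edge in a box $B(n)$ to a domain wall touching $B(n)$, of which there are at most $C|\partial_e B(n)|$; the ``at most one per wall'' fact above in fact removes the need for the relabelling and the length cutoff used in the $A_e(\e,K)$ argument, so this step is simpler here than there.
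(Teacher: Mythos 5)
Your proposal is correct and follows essentially the same route as the paper. The paper introduces, for each dual edge $b^*$, an event $Y_\e(b)$ (with $Y_\e$ corresponding to $b=g$, $e=f$), observes that at most two $b^*$ per domain wall can satisfy it, and then runs the Burton--Keane box-count of Lemma~\ref{lem: infinitelymanyedges} as flagged in Remark~\ref{rem: rem1}; you index by the rung edge $e^*$ instead of the domain-wall edge $b^*$, obtain the slightly sharper ``at most one per wall'' (your adding-the-two-gap-inequalities argument is exactly the mechanism the paper is implicitly relying on), and then invoke the same counting principle — a cosmetic re-bookkeeping, not a different method.
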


\begin{prop}\label{prop: ye}
If $\widetilde M(X_\e \cap \{\widetilde F_f>\e\})>0$, then
\[
\widetilde M(Y_\e)>0\ .
\]
\end{prop}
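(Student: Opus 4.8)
The plan is to produce configurations in $Y_\e$ by lowering the single coupling $J_f$ on configurations drawn from $X_\e\cap\{\widetilde F_f>\e\}$. On such a configuration write $E(R)=J_f\sigma_f+P$ with $P=\sum_{\{z,w\}^*\in R,\ \{z,w\}\neq f}J_{zw}\sigma_z\sigma_w$, a quantity not involving $J_f$; since $f^*$ lies on a rung we have $f^*\notin\sigma\Delta\sigma'$, so $\sigma_f=\sigma'_f$, and we may assume $\sigma_f=\sigma'_f=+1$ (the case $-1$ is symmetric, raising $J_f$ instead). By the construction in Proposition~\ref{prop: xe} the rung $R$ touches the domain wall $D_0(\omega)$ of $g^*=h^*$, hence $E(R)\geq I(\omega)$, and since $I(\omega)\leq I'(\omega)$ condition (3) of $X_\e$ gives
\[
J_f+P\ =\ E(R)\ <\ I(\omega)+\e/2\ \leq\ I'(\omega)+\e/2\ .
\]
Also $\sigma,\sigma'\in\mathcal{G}(J)$ with $\sigma_f=\sigma'_f=+1$ and $\widetilde F_f>\e$ give $J_f>\max\{C_f,C'_f\}+\e$. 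Therefore, for any $y$ with $\max\{C_f,C'_f\}<y<J_f-3\e/4$, the triple $(J(f,y),\sigma,\sigma')$ still has $\sigma,\sigma'$ ground states (Lemma~\ref{lem: monotone}), still has $\sigma\Delta\sigma'$ disconnected with $g^*\in\sigma\Delta\sigma'$ (the spins are untouched, and $\{I>0\}$ holds $\widetilde M$-a.s.\ on $\{\sigma\Delta\sigma'\text{ disconnected}\}$ by \eqref{eqn: 1}), and has rung energy $y+P<(J_f+P)-3\e/4<I'(\omega)-\e/4$ with $I'(\omega)$ unchanged because rungs avoiding $f^*$ do not feel $J_f$. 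Hence $(J(f,y),\sigma,\sigma')\in Y_\e$, and the room to perform this lowering, namely $(J_f-3\e/4)-\max\{C_f,C'_f\}>\e/4>0$, comes exactly from the hypothesis $\widetilde F_f>\e$.

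Since $\widetilde M$ is a weak limit of averages of translates of $M^*=\Phi_*M$ and cannot be perturbed in a coupling directly, this must be carried out at the level of $M$ (or $T^kM^*$) and then re-limited, in the manner of Lemma~\ref{lem: almostSS}. Using the hypothesis $\widetilde M(X_\e\cap\{\widetilde F_f>\e\})>0$, the fact that $\widetilde M(I=0)=0$ on the disconnected event by \eqref{eqn: 1}, and countable additivity, I would first fix deterministic local data --- a box $B(0;N)$, spin restrictions $\sigma_N,\sigma'_N$ pinning down the rung $R\ni f^*$ and the piece of $D_0$ inside the box --- together with rationals $c,c',p$ (avoiding the countably many atoms, under $\widetilde M$, of $C_f,C'_f,P,\widetilde F_f$ and of $I'(\omega)$) and a small $\delta<\e/16$, so that
\[
E_5\ :=\ X_\e\cap\{\widetilde F_f>\e\}\cap\{\sigma|_{B(0;N)}=\sigma_N,\ \sigma'|_{B(0;N)}=\sigma'_N,\ C_f\in[c,c+\delta],\ C'_f\in[c',c'+\delta],\ P\in[p,p+\delta]\}
\]
has $\widetilde M(E_5)>0$. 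Set $c_0:=\max\{c,c'\}+\delta$. Condition (3) of $X_\e$ together with $J_f>\max\{C_f,C'_f\}+\e\geq c_0-\delta+\e$ yields, on $E_5$, the lower bound $I'(\omega)>c_0+p+\e/2-\delta$. Let $A$ be the event: $\sigma,\sigma'\in\mathcal{G}(J)$; $\sigma_f=\sigma'_f=+1$; $\sigma\Delta\sigma'$ is disconnected and $g^*\in\sigma\Delta\sigma'$; $R$ is a rung; $\sigma|_{B(0;N)}=\sigma_N$, $\sigma'|_{B(0;N)}=\sigma'_N$; $C_f\in[c,c+\delta]$, $C'_f\in[c',c'+\delta]$, $P\in[p,p+\delta]$; and $I'(\omega)>c_0+p+\e/2-\delta$. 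Then $E_5\subseteq A\cap\{J_f\geq c_0\}$, so $\widetilde M(A,\ J_f\geq c_0)>0$; moreover none of the conditions defining $A$ is destroyed when $J_f$ is increased above $c_0$ (ground states are preserved by Lemma~\ref{lem: monotone}; the quantities $C_f,C'_f$ (Lemma~\ref{lem: critical formula}), $P$ and $I'$, as well as all the spins --- hence also $\sigma\Delta\sigma'$ and whether $R$ is a rung --- are independent of $J_f$), so $A$ has the monotonicity property \eqref{eqn: event monotone 2} with $e=f$ and $c=c_0$.

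Now I would apply Lemma~\ref{lem: backmodify} --- whose proof applies verbatim to two replicas, on $\Omega$ and, after pushing forward by $\Phi$, on $\Omega^*$ and every translate $T^k\Omega^*$, where $C_f,C'_f,F_f,F'_f$ are coordinates --- with $e=f$, $c=c_0$ and $d:=c_0+\e/8$, to obtain $T^kM^*(A,\ J_f\in[c_0,d])\geq\nu([c_0,d])\,T^kM^*(A,\ J_f\geq c_0)$. The obstruction is that three ingredients of $A$ --- that $R$ is a rung, that $\sigma\Delta\sigma'$ is disconnected, and that $I'(\omega)>c_0+p+\e/2-\delta$ --- are not cylinder events; as in Lemma~\ref{lem: almostSS} I would replace them by cylinder events built from $(j,l)$-rungs and $(j,l)$-domain walls (``$R$ is a $(j,l)$-rung''; the $(j,l)$-domain wall of $g^*$ versus the others; and the infimum over $(j,l)$-rungs touching the $(j,l)$-domain wall of $g^*$ and avoiding $f^*$ exceeds $c_0+p+\e/2-\delta$), obtaining cylinder events $A_{j,l}$ with $\lim_{l}\limsup_{j}\widetilde M(A\,\Delta\,A_{j,l})=0$; run the inequality for $A_{j,l}$, average over $k\leq n$, let $n\to\infty$ along the defining subsequence (every relevant cylinder boundary is $\widetilde M$-null --- by continuity of $\nu$, absence of atoms of $C_f,C'_f,\widetilde F_f$ at the chosen values, and the fact that differences of $(j,l)$-rung energies cannot hit fixed values $\widetilde M$-a.s., cf.\ \eqref{eq: rigging2} and the argument following \eqref{eq: floss}), and finally let $j,l\to\infty$. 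This gives $\widetilde M(A,\ J_f\in[c_0,d])\geq\nu([c_0,d])\,\widetilde M(A,\ J_f\geq c_0)>0$. Finally, on $A\cap\{J_f\in[c_0,d]\}$ we have $\sigma,\sigma'\in\mathcal{G}(J)$ (as $J_f\geq c_0\geq\max\{C_f,C'_f\}$), $R$ a rung, $\sigma\Delta\sigma'$ disconnected, $g^*\in\sigma\Delta\sigma'$ (hence $I>0$ $\widetilde M$-a.s.\ by \eqref{eqn: 1}), and
\[
E(R)\ =\ J_f+P\ \leq\ (c_0+\e/8)+(p+\delta)\ <\ \bigl(c_0+p+\e/2-\delta\bigr)-\e/4\ <\ I'(\omega)-\e/4
\]
since $\delta<\e/16$; thus $A\cap\{J_f\in[c_0,d]\}\subseteq Y_\e$ up to a $\widetilde M$-null set, and $\widetilde M(Y_\e)>0$.

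The main obstacle is precisely this last transfer: because the conditions defining $Y_\e$ and $A$ refer to $I'(\omega)$ and to the global connectivity of $\sigma\Delta\sigma'$, neither of which is a cylinder quantity, one must interpose the $(j,l)$-rung approximations of Lemma~\ref{lem: almostSS}, check that the monotonicity hypothesis of Lemma~\ref{lem: backmodify} survives for the cylinder versions $A_{j,l}$, and verify that every cylinder set whose probability is pushed to the limit has a $\widetilde M$-null boundary. The remaining bookkeeping with $\e/2$, $\e/4$, $\e/8$ and $\delta$ is routine.
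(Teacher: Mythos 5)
Your proof is correct and follows essentially the same strategy as the paper: condition on a positive-$\widetilde M$-probability event where the critical values $C_f,C_f'$ (and hence the room between $J_f$ and $\widetilde C_f$) are controlled, apply Lemma~\ref{lem: backmodify} at the half-plane level to lower $J_f$ toward $\widetilde C_f$ so that a rung through $f^*$ has energy below $I'-\e/4$, and transfer to $\widetilde M$ via the same $(j,l)$-rung cylinder approximations and $\widetilde M$-null boundary checks as in Lemma~\ref{lem: almostSS}. The only difference is cosmetic bookkeeping: you fix a specific rung $R$ and its $J_f$-free energy $P$ and pin $C_f, C_f', P$ in $\delta$-intervals ($\delta<\e/16$), whereas the paper uses $\widetilde C_f\in(a,a+\e/8)$ together with $\widetilde I$ and $\widetilde K=\widetilde I-J_f$; both yield the same estimate.
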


\begin{proof}[Proof of Proposition~\ref{prop: ye=0}]
For a dual vertex $b^*$, let $Y_\e(b) \subseteq \widetilde \Omega$ be the event that
\begin{enumerate}
\item $\sigma\Delta\sigma'$ is disconnected and $I>0$;
\item $b^* \in \sigma \Delta \sigma'$;
\item there is a dual edge $e^*$, sharing a dual endpoint with $b^*$, that is the first edge of a rung $R$ with $E(R) <I'_{b,e}(\omega)-\e/4$.
Here $I'_{b,e}$ is the infimum of energies of the rungs not containing $e^*$ and touching the domain wall of $b^*$.
\end{enumerate}
In this notation, the $Y_\e$ corresponds to the case $b=g$ and $e=f$.
By definition of $I'_{b,e}$, for each domain wall $D$, there are at most two dual edges $b^*$ such that $Y_\e(b)$ occurs (one for each side of $D$).
By the same argument as in Lemma~\ref{lem: infinitelymanyedges} (with $B_e(\e,K)$ replaced by $Y_\e(b)$), it follows
that $\widetilde M(Y_\e(b))=0$ for all dual edges $b$ (see Remark~\ref{rem: rem1}), so $\widetilde M(Y_\e)=0$.
\end{proof}

\begin{proof}[Proof of Proposition~\ref{prop: ye}]
On the event $X_\e \cap \{\widetilde F_f>\e\}$, either the spins at the endpoints of $f$ are the same or they are different (in both $\sigma$ and $\sigma'$). 
Let us suppose that:
\[
\widetilde M(X_\e,~\widetilde F_f>\e,~\sigma_f =\sigma_f' =+1)>0\ .
\]
The subsequent argument can easily be modified in the case $\sigma_f=\sigma'_f=-1$ (using an obvious analogue of Lemma~\ref{lem: backmodify}.)
Define $\widetilde C_f = \max\{C_f, C_f'\}$. We may choose $a \in \mathbb{R}$ such that 
\begin{equation}\label{eq: toaster1}
\widetilde M(X_\e,~\widetilde F_f>\e,~\sigma_f=\sigma'_f=+1,~\widetilde C_f \in (a,a+\e/8))>0\ 
\end{equation}
and because the distribution of $\widetilde C_f$ can have countably many point masses, we may further restrict our choice of $a$ so that 
\begin{equation}\label{eq: rigging}
\widetilde M(\widetilde C_f = a \text{ or } a+\e/8) = 0\ .
\end{equation}
By property \eqref{eqn: equiv}, for each $k$,
\[
T^k M((J,\sigma,\sigma')~:~\sigma_f= \sigma'_f =+1,~J_f < \max\{C_f(J,\sigma),C_f(J,\sigma')\}) = 0\ .
\]
This is an open cylinder event in $\Omega^*$, thus after averaging and taking liminf,
\begin{equation}\label{eq: toaster2}
\widetilde M(\sigma_f=\sigma'_f=+1,~J_f < \widetilde C_f) \leq \liminf_{k \to \infty} M_k^*(\sigma_f=\sigma'_f=+1,~J_f < \widetilde C_f) = 0\ .
\end{equation}
If $J_f\geq\widetilde C_f$ and $\widetilde F_f=\max\{|J_f-C_f|,|J_f-C_f'|\}>\e$, then $J_f>C_f+\e$ and $J_f'>C_f'+\e$. 
By combining \eqref{eq: toaster1} and \eqref{eq: toaster2}, we thus find
\[
\widetilde M(X_\e,~\widetilde C_f \in (a,a+\e/8), J_f \geq a+\e)>0\ .
\]

Recall that $\widetilde I(\omega)$ is the infimum of energies of all rungs that contain $f^*$. On the event $X_\e$, we have $\widetilde I(\omega)< I'(\omega)+\e/2$.
Therefore if $\widetilde B$ is the event that
\begin{enumerate}
\item $g^* \in \sigma \Delta \sigma'$ but $f^* \notin \sigma \Delta \sigma'$;
\item  $\widetilde I(\omega)<I'(\omega)+\e/2$,
 \end{enumerate}
 then
\begin{equation}\label{eq: tildeB4}
\widetilde M(\widetilde B,~ \widetilde C_f \in (a,a+\e/8),~J_f \geq a+\e)>0\ .
\end{equation}
Note that condition 2 of $\widetilde B$ only makes sense if $f^*$ is actually in a rung; however, in the support of $\widetilde M$, $\sigma$ and $\sigma'$ are ground states, so their interface does not contain loops. Thus when condition 1 of $\widetilde B$ holds and $\omega$ is in the support of $\widetilde M$, $f^*$ is in a rung.

From this point on, the strategy is similar to the proof of Lemma~\ref{lem: almostSS}. 
The idea is to use Lemma~\ref{lem: backmodify} to lower $\widetilde I(\omega)$ below $I'(\omega)-\e/4$.
Let $\widetilde P$ be the event $\widetilde B$ with the condition $\widetilde I(\omega)<I'(\omega)+\e/2$ replaced by
$\widetilde I(\omega)<I'(\omega)-\e/4$. We will show that 
$$\widetilde M(\widetilde P)>0\ .$$
A quick look at  \eqref{eq: tildeB4} can convince us that this is possible since $J_f$ could be lowered by $3\e/4$
and still not reach the critical value. Since $\widetilde I(\omega)$ depends linearly on $J_f$ by definition, it will be itself lowered by $3\e/4$
and become lower than $I'(\omega)$ by $\e/4$.
To make this reasoning rigorous, as in the proof of Lemma~\ref{lem: almostSS}, we must bring the problem back to the half-plane measure $M$ and find
a cylinder approximation for both $\widetilde B$ and $\widetilde P$.

Let $B(0;j)$ be the box of side-length $j$ centered at $0$ and let $l \geq j$.
Recall the definitions of  $(j,l)$-domain walls and $(j,l)$-rungs below \eqref{eq: symmetricdiff}.
Let $D_0(j,l), D_1(j,l), \ldots, D_t(j,l)$ be the $(j,l)$-domain walls in $B(0;j)$ and $D_0(j,l)$ be the one containing $g^*$ (if it exists).
For $l \geq j$ and $\omega \in \widetilde \Omega$ such that $g^* \in \sigma \Delta \sigma'$, write $I_{j,l}'(\omega)$ (the cylinder approximation of $I'(\omega)$) as the infimum of all energies of $(j,l)$-rungs which touch $D_0(j,l)$ but do not contain the dual edge $f^*$. Write $\widetilde I_{j,l}(\omega)$ (the cylinder approximation of $\widetilde I(\omega)$) for the infimum of all energies of $(j,l)$-rungs which contain $f^*$. 
Let $\widetilde B_{j,l}\subseteq \widetilde \Omega$ be the cylinder approximation of $\widetilde B$:
\begin{enumerate}
\item $g^* \in \sigma \Delta \sigma'$ but $f^* \notin \sigma \Delta \sigma'$.
\item $\widetilde I_{j,l}(\omega)<I'_{j,l}(\omega)+\e/2$. 
\end{enumerate}
We define the cylinder approximation $\widetilde P_{j,l}$ of $\widetilde P$ similarly with $\e/2$ replaced by $-\e/4$. 
There may be no $(j,l)$ rungs, but their existence is implicit in condition 2 (in other words, it is implied in condition 2 that the variables $\widetilde I_{j,l}(\omega)$ and $I'_{j,l}(\omega)$ are defined). We claim that
\begin{equation}\label{eq: tildeB3}
\begin{aligned}
&\lim_{j \to \infty} \limsup_{l \to \infty} \widetilde M(\widetilde B_{j,l} \Delta \widetilde B) = 0\\
&\lim_{j \to \infty} \limsup_{l \to \infty} \widetilde M( P_{j,l} \Delta P_\e) = 0\ .
\end{aligned}
\end{equation}
We give the proof for $\widetilde{B}$. The proof for $\widetilde P$ is identical with $\e/2$ replaced by $-\e/4$.

To begin with, let $\omega$ be a configuration such that $g^* \in \sigma \Delta \sigma'$ and $f \notin \sigma \Delta \sigma'$ (this is true for all configurations in $\widetilde B$ or in $\widetilde B_{j,l}$). Note that for fixed $j$,
\[
\widetilde I_j(\omega):= \lim_{l \to \infty} \widetilde I_{j,l}(\omega) \text{ exists }
\]
and equals the infimum of energies of all rungs that stay in $B(0;j)$ and contain $f^*$. Clearly,
\[
\lim_{j \to \infty} \widetilde I_j(\omega) = \widetilde I(\omega)\ .
\]
The analogous statements are true for $I'(\omega)$ (defining $I'_j(\omega)$ similarly). Therefore given $\delta>0$ we may choose $J(\omega)$ such that $j \geq J(\omega)$ implies that
\[
|\widetilde I_j(\omega) - \widetilde I(\omega)|<\delta/2 \text{ and } |I'_j(\omega) - I'(\omega)|<\delta/2\ .
\]
For any such $j$ we can find $L(j,\omega)$ such that for $l \geq L(j,\omega)$,
\[
|\widetilde I_{j,l}(\omega) - \widetilde I_j(\omega)| < \delta/2 \text{ and } |I'_{j,l}(\omega)-I'_j(\omega)|<\delta/2\ .
\]
Therefore for $j \geq J(\omega)$ and $l \geq L(j,\omega)$,
\begin{equation}\label{eq: hamster}
|\widetilde I_{j,l}(\omega) - \widetilde I(\omega)|<\delta \text{ and } |I'_{j,l}(\omega)-I'(\omega)|<\delta\ .
\end{equation}
We first show that 
\begin{equation}
\label{eqn: B minus B_ij}
\lim_{j \to \infty} \limsup_{l \to \infty} \widetilde M(\widetilde B \setminus \widetilde B_{j,l}) = 0\ .
\end{equation}
Suppose that $\omega \in \widetilde B$. 
Then $\widetilde I(\omega)< I'(\omega)+\e/2$ and, combining this with \eqref{eq: hamster}, we may choose $\delta = \delta(\omega)$ so small that for $j \geq J(\omega)$ and $l \geq L(j,\omega)$,
\[
\widetilde I_{j,l}(\omega)<I'_{j,l}(\omega) + \e/2\ .
\]
Because $\omega \in \widetilde B$, the first condition of $\widetilde B_{j,l}$ holds directly.
Equation \eqref{eqn: B minus B_ij} follows from this using the same reasoning as for \eqref{eq: stapler1}.

We now prove that
\begin{equation}\label{eq: hamster2}
\lim_{j \to \infty} \limsup_{l \to \infty} \widetilde M(\widetilde B_{j,l} \setminus \widetilde B) = 0\ .
\end{equation}
As before, we need to show that if $\widetilde I(\omega)\geq I'(\omega)+\e/2$ then there is $J(\omega)$ such that for each $j \geq J(\omega)$, there is an $L(j,\omega)$ such that if $l \geq L(j,\omega)$ then $\widetilde I_{j,l}(\omega) \geq I_{j,l}'(\omega)$. If $\widetilde I(\omega)>I'(\omega)+\e/2$ then the arguments leading up to \eqref{eq: hamster} prove this immediately. In the other case, let $\widetilde U$ be the event that $\widetilde I(\omega) = I'(\omega)+\e/2$. This event has $\widetilde M$-probability zero by \eqref{eq: technical} (for the approximation of $\widetilde P$,  one has $\widetilde I(\omega) = I'(\omega)-\e/4$ instead).
So
\[
\lim_{j \to \infty} \limsup_{l \to \infty} \widetilde M( (\widetilde B_{j,l} \cap \widetilde U^c) \setminus (\widetilde B \cap \widetilde U^c)) = 0\ .
\]
However, $\widetilde U$ has $\widetilde M$-probability zero, so this proves \eqref{eq: hamster2}.

Notice that 
\[
\widetilde B_{j,l} \cap \{\widetilde C_f \in (a,a+\e/8) \} \cap \{J_f \geq a+\e\}
\]
is a cylinder event in $\widetilde \Omega$.
This event also makes sense under the measure $T^kM$ on the half-plane for
$\widetilde C_f= \max \{C_f(J,\sigma),C_f(J,\sigma')\}$ (where the critical values are functions
as defined in \eqref{eqn: c_e}) and for $k\geq l\geq j$ so that the boxes are contained in $T^k V_H$.
We now analyze the probability $T^kM(\widetilde B_{j,l},~C_f\in (a,a+\e/8),~J_f \geq a+\e)$.
Let $\widetilde K_{j,l}(\omega):= \widetilde I_{j,l}(\omega)-J_f$ be the infimum of the energies of $(j,l)$-rungs
where the contribution from the edge $f$ is removed. 
If $\widetilde I_{j,l}(\omega)<I'_{j,l}(\omega)+\e/2$ and $J_f\geq a+\e$, then
\begin{equation}
\label{eqn: K}
\widetilde K_{j,l}(\omega)= \widetilde I_{j,l}(\omega)-J_f < I'_{j,l}(\omega)-a - \e/2\ .
\end{equation}
Define $A_{j,l} \subseteq T^k\Omega$ as the intersection of the following events. 
\begin{enumerate}
\item $f^* \notin \sigma \Delta \sigma'$ and $\sigma_f=\sigma'_f=+1$. 
\item $g^* \in \sigma \Delta \sigma'$ and $\widetilde K_{j,l}(\omega) < I'_{j,l}(\omega)-a - \e/2$.
\item $\widetilde C_f \in (a,a+\e/8)$. 
\item $\sigma, \sigma'$ are in $\G(J)$, the ground states in $\Z\times\N$.
\end{enumerate}
Implicit in the second condition is that the variables $\widetilde K_{j,l}(\omega)$ and $I'_{j,l}(\omega)$ are actually defined; in particular, $f^*$ must be in some $(j,l)$-rung. Although the last condition does not give a cylinder event, it will be used to apply Lemma~\ref{lem: backmodify}. $A_{j,l}$ is an intermediary event between $\widetilde B_{j,l}$ and $P_{j,l}$. 
On the set $\{\sigma,\sigma' \in \G(J)\}$, $\widetilde B_{j,l} \cap \{\widetilde C_f \in (a,a+\e/8) \} \cap \{J_f \geq b\}$ implies $A_{j,l}$ by \eqref{eqn: K}, so
\begin{equation}\label{eq: Btilde2}
T^kM(\widetilde B_{j,l},~\widetilde C_f \in (a,a+\e/8),~J_f \geq a+\e) \leq T^kM(A_{j,l})\ .
\end{equation}

We claim that $A_{j,l}$ (and $T^kA_{j,l}$) has the property \eqref{eqn: event monotone 2} of Lemma~\ref{lem: backmodify}:
$$
\text{If $(J,\sigma,\sigma') \in A_{j,l}$ and $J_f \geq a+\e/8$, then $(J(f,s),\sigma,\sigma') \in A_{j,l}$ for all $s \geq a+\e/8$}\ . 
$$
To verify this, note that the defining condition 1 of $A_{j,l}$ does not depend on $J_f$, so if $(J,\sigma,\sigma')$ satisfies it, so will $(J(f,s),\sigma,\sigma')$ for all $s$. 
Next we argue that $\sigma,\sigma' \in \G(J(f,s))$ for all $s \geq a+\e/8$. 
This holds because $\sigma_f=\sigma'_f=+1$, $J_f \geq a+\e/8 >\widetilde C_f=\max\{C_f(J,\sigma),C_f(J,\sigma')\}$.
Clearly condition 3 holds for $(J(f,s),\sigma,\sigma')$ as the critical values do not depend on the coupling at $f$. 
Last, because $\sigma_f=\sigma'_f=+1$, we see that $\widetilde K_{j,l}(J(f,s),\sigma,\sigma')$ does not depend on $s$ since the contribution of $J_f$ to $\widetilde I_{j,l}$ is removed. 
Also the variable $I_{j,l}'$ does not depend on $J_f$ by construction. Therefore condition 4 holds for $(J(f,s),\sigma,\sigma')$. 

We are now in the position to apply Lemma~\ref{lem: backmodify}. 
Because $A_{j,l}$ satisfies the hypotheses of the lemma for $c=a+\e/8$, we select $d=a+\e/4$ and find
\[
T^kM(A_{j,l},~J_f \in [a+\e/8,a+\e/4]) \geq \nu([a+\e/8,a+\e/4]) ~T^kM(A_{j,l})\ .
\]
When $A_{j,l}$ occurs and $J_f \leq a+\e/4$,
\begin{eqnarray*}
\widetilde I_{j,l}(\omega) = \widetilde K_{j,l}(\omega) + J_f &<& I'_{j,l}(\omega) -a -\e/2 + a +\e/4 \\
&=& I'_{j,l}(\omega)-\e/4\ .
\end{eqnarray*}
Therefore, writing $r = \nu([a+\e/8,a+\e/4])$, 
\[
T^k M(A_{j,l},~\widetilde I_{j,l}(\omega) \leq I'_{j,l}(\omega)-\e/4) \geq r~T^kM(A_{j,l})\ ,
\]
and by \eqref{eq: Btilde2},
\begin{equation}\label{eq: Btilde3}
T^k M(A_{j,l},~\widetilde I_{j,l}(\omega) \leq I'_{j,l}(\omega)-\e/4) \geq r~T^kM(\widetilde B_{j,l}, \widetilde C_f \in (a,a+\e/8), J_f \geq a+\e)\ .
\end{equation}
Now $A_{j,l}\cap \{\widetilde I_{j,l}(\omega) \leq I'_{j,l}(\omega)-\e/4\}$ is contained in $\widetilde P_{j,l}$.  By \eqref{eq: Btilde3},
\begin{equation}\label{eq: lastreally1}
T^kM( \widetilde P_{j,l}) \geq r~T^kM(\widetilde B_{j,l}, \widetilde C_f \in (a,a+\e/8),J_f \geq a+\e)\ .
\end{equation}

We now want to average over $k$ and take the limit in \eqref{eq: lastreally1}. First note that $\widetilde P_{j,l}$ is an event that only involves spins and couplings. 
Furthermore, the only non-trivial contribution to the boundary $\partial \widetilde P_{j,l}$ is $\partial \{\widetilde I_{j,l}(\omega) \leq I'_{j,l}(\omega)-\e/4\}$. 
This event is contained in the event that there are two distinct $(j,l)$-rungs in the box $B(0;N)$ whose energies differ by exactly $\e/4$. Since the energy is a linear function of the couplings and of the spins, and since there are only a finite
number of possible rungs in $B(0;N)$, this event has $\widetilde M$-probability zero by the continuity of $\nu$. 
Thus by the discussion preceding Remark~\ref{rem: rabbit} we may take the limit on the left to get
\[
\lim_{k \to \infty} M_k^*(P_{j,l}) = \widetilde M(P_{j,l})\ .
\]
By \eqref{eq: rigging}, and reasoning similar to above, the boundary of the event on the right side of \eqref{eq: lastreally1} also has $\widetilde M$-probability zero. Therefore we can average over $k$ in \eqref{eq: lastreally1} and take the limit to finally get
\begin{equation}\label{eq: finaleq11}
\widetilde M( P_{j,l}) \geq r~\widetilde M(\widetilde B_{j,l},\widetilde C_f \in (a,a+\e/8),J_f \geq a+\e)\ .
\end{equation}
Finally, it suffices to take $l \to \infty$ and $j \to \infty$. By \eqref{eq: tildeB3}, the right side converges to
\[
r\widetilde M(\widetilde B, \widetilde C_f \in (a,a+\e/8),J_f \geq a+\e) >0\ .
\]
The probability is positive by \eqref{eq: tildeB4}.
The left side of \eqref{eq: finaleq11} converges to $\widetilde M (\widetilde P)$ again by \eqref{eq: tildeB3}.
Thus $\widetilde M(P_\e) >0$. As $P_\e \cap \{I>0\} \subset Y_\e$ and $\widetilde M(I=0)=0$, this completes the proof.
\end{proof}

\end{document}